\newcommand\org@maketitle{}
\newcommand\@authors{}
\let\org@maketitle\maketitle
\def\maketitle{%
	% fixing authors for amsbook
	\let\@authors\authors
	\nxandlist{; }{ and }{; }\@authors
	% end fix
	\hypersetup{
		linktocpage=true,
		pdftitle={\@title},
                pdfauthor={\@authors},
                pdfsubject={\subjclassname. \@subjclass},
		pdfkeywords={\@keywords}
	}%
	\org@maketitle
}
\renewcommand{\PrintDOI}[1]{\doi{#1}}
\numberwithin{equation}{section}
\newtheorem{theorem}{Theorem}[section]
\newtheorem{lemma}[theorem]{Lemma}
\newtheorem{corollary}[theorem]{Corollary}
\newtheorem{proposition}[theorem]{Proposition}
\theoremstyle{definition}
\newtheorem{definition}[theorem]{Definition}
\theoremstyle{remark}
\newtheorem{remark}[theorem]{Remark}
\newcommand{\e}{\varepsilon}
\newcommand{\R}{\mathbb{R}}
\def\XXint#1#2#3{{\setbox0=\hbox{$#1{#2#3}{\int}$}
    \vcenter{\hbox{$#2#3$}}\kern-.5\wd0}}
\mathchardef\ordinarycolon\mathcode`\:
\author{Alessandra De Luca}
\address{Alessandra De Luca\newline\indent
Dipartimento di Matematica ``Giuseppe Peano''
\newline\indent
Universit\`a degli Studi di Torino
\newline\indent
Via Carlo Alberto 10, 10124, Torino, Italy   }
\email{a.deluca@unito.it}
\author{Veronica Felli}
\address{Veronica Felli\newline\indent
Dipartimento di Matematica e Applicazioni
\newline\indent
Università degli Studi di Milano-Bicocca
\newline\indent
Via Cozzi 55, 20125, Milano, Italy
}
\email{veronica.felli@unimib.it}
\author{Stefano Vita}
\address{Stefano Vita\newline\indent
Dipartimento di Matematica ``Giuseppe Peano''
\newline\indent
Universit\`a degli Studi di Torino
\newline\indent
Via Carlo Alberto 10, 10124, Torino, Italy   }
\email{stefano.vita@unito.it}
\date{May 21, 2024}
\title[Unique continuation from conical boundary points for fractional equations]{Unique continuation from conical boundary points for fractional equations}
\subjclass[2020]{
31B25, %Boundary behavior of harmonic functions in higher dimensions
35R11, %Fractional partial differential equations
35C20, %Asymptotic expansions of solutions to PDEs
35J75, %Singular elliptic equations
35A16.  %Topological and monotonicity methods applied to PDEs
}
\keywords{Fractional elliptic equations; unique continuation;
monotonicity formula; boundary behavior of solutions; conical boundary points.}
\begin{document}

\begin{abstract}
We provide fine asymptotics of solutions of fractional elliptic equations at boundary points where the domain is locally conical; that is, corner type singularities appear. Our method relies on a suitable smoothing of the corner singularity and an approximation scheme, which allow us to provide a Pohozaev type inequality. Then, the asymptotics of solutions at the conical point follow by an Almgren type monotonicity formula, blow-up analysis and Fourier decomposition on eigenspaces of a spherical eigenvalue problem. A strong unique continuation principle follows as a corollary.

\end{abstract}

\maketitle
%\tableofcontents
%%%%%%%%%%%%%%%%%%%%%%%%%%%%%%%%%%%%%%

\section{Introduction and main results}
The present paper continues the program started in \cites{DelFel21,DelFelVit22} on strong unique continuation properties at the boundary for fractional elliptic equations, see also \cite{DelFelSic23} for the case of spectral fractional Laplacians. In particular, here we are interested in precise local asymptotics of solutions at boundary points where the domain presents corner type singularities, resembling those considered in \cite{FelFer13} for the local case; the effect of a Hardy-type potential, with a singularity at the vertex of the cone,  is also considered.

The subject of unique continuation for elliptic operators is strongly related to the problem of asymptotic behavior of solutions and classification of blow-up profiles; this connection becomes particularly relevant when dealing with  fractional problems, as emerged from \cite{FalFel14}. 
Unique continuation principles for second order elliptic equations 
have been addressed in the literature through two different types of approaches: a first one dates back to Carleman \cite{Ca1939} and relies on weighted a priori inequalities, whereas a second one, due to Garofalo and Lin \cite{GarLin86}, makes use of doubling properties derived from the Almgren monotonicity formula, see \cite{almgren}. 
The Almgren frequency function, defined as the ratio of local energy over mass near a fixed point, possesses an intrinsically local nature. Therefore, the development of an Almgren-type approach for nonlocal operators, associated with fractional elliptic equations, requires a localization of the problem; this is achievable, for example, through the  Caffarelli-Silvestre \cite{CafSil07} extension, characterizing the fractional Laplacian as the Dirichlet-to-Neumann map in one extra spatial dimension. Nevertheless, while doubling conditions suffice for establishing unique continuation in the local case, they only ensure unique continuation for the extended local problem and not for the fractional one. 
Such difficulty was overcome in \cite{FalFel14}  through an accurate  blow-up analysis and a precise classification of the possible blow-up limit profiles, in terms of a Neumann eigenvalue problem on the half-sphere. 

Deriving  monotonicity formulas around boundary points, and consequently proving  unique continuation from the boundary, poses greater difficulties, because of possible loss of regularity and inevitable interference with the geometry of the domain.
This issue arises in the study of boundary unique continuation also in the local case, which has been investigated in \cites{AEK95,AE97,DelFel21,FelSic22,KN98,FelFer13} by monotonicity methods; see also \cite{DFV2000}, where unique continuation for elliptic equations with non-homogeneous Neumann boundary conditions in cones was studied. 
We mention  \cite{KriLi23} for  the study of the asymptotic behavior of solutions to homogeneous Dirichlet problems  at boundary points with tangent cones, without monotonicity formulas.

For fractional equations and the corresponding extension problem, unique continuation from the boundary has been addressed in \cite{DelFelVit22} for the restricted fractional Laplacian, and in \cite{DelFelSic23} for the spectral fractional Laplacian.

Let $\Omega\subset \R^N$ be an  open set, $N\geq 2$. Let us consider weak solutions to the following class of fractional elliptic equations with a singular homogeneous potential
\begin{equation}\label{eq1}
\begin{cases}
(-\Delta)^su-\frac{\lambda}{|x|^{2s}}u=hu &\mathrm{in} \ \Omega\\
u=0 &\mathrm{in} \ \R^N\setminus \Omega,
\end{cases}
\end{equation}
where $(-\Delta)^s$ is the fractional Laplacian of order $s\in(0,1)$, see \eqref{eq:def-fract-lapl1}--\eqref{eq:def-fract-lapl2}, and 
\begin{equation}\label{eq:ipo-h}
h\in W^{1,p}(\Omega) \quad \text{for some }p>\frac{N}{2s}.
\end{equation}
We assume that $0\in\partial\Omega$ is a conical boundary point in the sense that, up to rotations and dilations, $\{x\in\Omega:|x|<1\}=\{x\in \mathcal C:|x|<1\}$, where $\mathcal C$ is a cone; that is, denoting as $x=(x',x_N)\in\R^{N-1}\times\R$ the variable in $\R^N$, 
\begin{equation}\label{ipocone}
\mathcal C=\mathcal C^\varphi:=\{(x',x_N)\in\R^{N-1}\times\R:x_N<\varphi(x')\},
\end{equation}
with $\varphi:\R^{N-1}\to\R$ satisfying
\begin{equation}\label{cone_property}
\begin{cases}
\varphi(x')=|x'| \,\varphi\Big(\frac{x'}{|x'|}\Big), \quad \varphi(0)=0,\\
g:=\varphi_{|_{\mathbb S^{N-2}}}\in C^{1,1}.
\end{cases}
\end{equation}
In particular, $\varphi$ is a $1$-homogeneous function; that is, equivalently,
\begin{equation*}\label{Euler}
\nabla_{x'}\varphi(x')\cdot x'=\varphi(x'),
\end{equation*}
by Euler's theorem. The cone $\mathcal C$ is an hypographical
cone in the sense that it lives below the graph of $\varphi$. Of course, its complement $\R^N\setminus \mathcal C$ is an \emph{epigraphical} cone. Moreover,  $\partial \mathcal C^\varphi\setminus \{x'\in\R^N:|x'|<r\}$ 
has the same $C^{1,1}$-regularity of $g$ for any $0<r<1$.

The fact that the potential $\lambda |x|^{-2s}$  has the same homogeneity order as the operator $(-\Delta)^s$  makes the value of $\lambda$ affect the order of vanishing of solutions at $0$.
With the aim of investigating the combined effect of the cone's amplitude and the Hardy-type potential on the asymptotic behavior of solutions at the cone's vertex, we assume the following bound on the coefficient $\lambda$:
\begin{equation}\label{assumptonlambda}
\lambda< \Lambda_{N,s}(\mathcal C),
\end{equation} 
where  $\Lambda_{N,s}(\mathcal C)$ is defined in \eqref{lambdaC} as the best constant in a fractional Hardy-type inequality on cones. 
In particular, condition \eqref{assumptonlambda} ensures semi-boundedness from below of the operator at the left hand side of \eqref{eq1}.
In  Subsection \ref{subHardy} we  discuss some properties of this optimal constant, in particular its monotonicity  with respect to inclusion of cones, and, consequently, its relation to the well known optimal constant $\Lambda_{N,s}(\R^N)$ on the whole of $\R^N$ provided in \cite{Her77}, see \eqref{costantesututtoRn}.

The precise definition of weak solutions to \eqref{eq1} is given in Section \ref{sec:weak}. To state our main results, we anticipate here that by weak solution we mean a function $u$, belonging to   the 
homogeneous fractional space
 $\mathcal{D}^{s,2}(\mathbb{R}^N)$ defined in Subsection \ref{sec:weak}, such that $u=0$ in $\R^N\setminus \Omega$ and
 \begin{equation}\label{weakformula}
    \int_{\mathbb{R}^N} |\xi|^{2s} \mathcal{F}u(\xi) \overline{\mathcal{F}v(\xi)} = \int_{\Omega} \left(\frac{\lambda}{|x|^{2s}}+ h\right) u v \, dx\quad \text{for every $v\in C^\infty_c(\Omega)$},
\end{equation}
with $\mathcal F$ denoting the unitary Fourier transform.
As is often done in fractional problems, see \cite{CafSil07}, one can reformulate \eqref{eq1} via Dirichlet-to-Neumann maps, by adding a spatial variable $t\in\R$ and convoluting $u$ with the Poisson kernel of the upper half space $\mathbb{R}^{N+1}_+:= \mathbb{R}^N \times (0,+\infty)$. The resulting function $U$ solves
\begin{equation}\label{eqEXT}
\begin{cases}
\mathrm{div}(t^{1-2s}\nabla U)=0 &\mathrm{in} \ \R^{N+1}_+\\
-\lim_{t\to0^+}t^{1-2s}\partial_tU=\kappa_s\left( h + \frac{\lambda}{|x|^{2s}}\right) \mathop{\rm Tr}U &\mathrm{on} \ \Omega\times\{0\}\\
\mathop{\rm Tr}U =0 &\mathrm{on} \ (\R^N\setminus \Omega)\times\{0\},
\end{cases}
\end{equation}
with $\mathop{\rm Tr}U$ denoting the trace of $U$ on $\partial\R^{N+1}_+$, see \eqref{eq:trace}.
Here the full variable in $\R^{N+1}_+$ is denoted by $z=(x,t)\in \mathbb{R}^{N+1}_+$
and $\kappa_s$ is the explicit positive constant
\begin{equation*}\label{eq:kappa-s}
    \kappa_s=\frac{\Gamma(1-s)}{2^{2s-1}\Gamma(s)}.
\end{equation*}
A weak solution to \eqref{eqEXT} must be understood as a function $U$ belonging to the weighted Beppo Levi space $\mathcal{D}^{1,2}(\mathbb{R}^{N+1}_+, t^{1-2s})$ defined in \eqref{eq:D12}, with $\mathop{\rm Tr}U$ supported in $\Omega$ and satisfying
\begin{equation}\label{extensionweak-intro}
\int_{\R^{N+1}_+} t^{1-2s} \nabla U\cdot\nabla V\, dz= \kappa_s\int_{\Omega} \left(h+\frac{\lambda}{|x|^{2s}}\right) \mathop{\rm Tr}U\,\mathop{\rm Tr}V \, dx \quad \text{for all $V\in C^\infty_c(\R^{N+1}_+\cup \Omega )$}.
\end{equation}
Since we are interested in local properties of solutions at $0$, we localize the problem as
\begin{equation}\label{eq1EXT}
\begin{cases}
\mathrm{div}(t^{1-2s}\nabla U)=0 &\mathrm{in} \ B_1^+\\
-\lim_{t\to0^+}t^{1-2s}\partial_tU=\kappa_s\left( h + \frac{\lambda}{|x|^{2s}}\right) \mathop{\rm Tr}U &\mathrm{on} \ \Omega\cap B'_1\\
\mathop{\rm Tr}U=0 &\mathrm{on} \ B'_1\setminus \Omega,
\end{cases}
\end{equation}
where $B^+_1=B_1\cap \R^{N+1}_+$ is the unit half ball in $\mathbb{R}^{N+1}_+$, with $B_1=\{z\in\R^{N+1}:|z|<1\}$ being the unit $(N+1)$-dimensional ball, and  $B'_1= B_1 \cap \{(x,t)\in\R^{N+1}:t=0\}$ is the $N$-dimensional thin ball. A weak formulation of problem \eqref{eq1EXT} can be naturally given  in the weighted Sobolev space $H^1(B_1^+,t^{1-2s})$, made of functions $U\in
L^2(B_1^+,t^{1-2s})$ such that $\nabla U\in
L^2(B_1^+,t^{1-2s})$, and endowed with the norm
\begin{equation*}
\|U\|_{H^1(B_1^+,t^{1-2s})}:=\left(\int_{B_1^+} t^{1-2s} |\nabla
U(x,t)|^2 dx\,dt+\int_{B_1^+} t^{1-2s} U^2(x,t) dx\,dt
\right)^{1/2}.
\end{equation*}
A function $U\in H^{1}(B_1^+,t^{1-2s})$ is a weak solution to \eqref{eq1EXT} if  
\begin{equation}\label{extensionweak}
\int_{B^+_1} t^{1-2s} \nabla U\cdot\nabla V\, dz= \kappa_s\int_{\Omega\cap B'_1} \left(h+\frac{\lambda}{|x|^{2s}}\right) \mathop{\rm Tr}U\mathop{\rm Tr}V\, dx \quad \text{for all $V\in C^\infty_c(B^+_1\cup (\Omega\cap B'_1) )$},
\end{equation}
where 
\begin{equation}\label{eq:traccia-H1}
    \mathop{\rm Tr}:H^{1}(B_1^+,t^{1-2s})\to H^s(B_1')
\end{equation}
is the continuous trace map provided in  \cite{Nek93} and \cite{JinLiXio14}*{Proposition 2.1}.  We observe that such a trace operator is indeed the restriction of the map in \eqref{eq:trace}; for simplicity, we denote both by $\mathop{\rm Tr}$.

In Section \ref{spherical} we establish a close connection between the limiting behaviours at $0$ of solutions to \eqref{eq1EXT} and \eqref{eq1}, and the weighted spherical eigenvalue problem with mixed boundary conditions
\begin{equation}\label{probautov}
\begin{cases}
-\mathrm{div}_{\mathbb S^{N}}\left(\theta_{N+1}^{1-2s}\nabla_{\mathbb S^{N}}v\right)=\mu\theta_{N+1}^{1-2s} v &\mathrm{in \ } \mathbb S^{N}_+\\
-\lim_{\theta_{N+1}\to0^+}\theta_{N+1}^{1-2s}\nabla_{\mathbb S^{N}}v\cdot\mathbf{e}=\kappa_s\lambda v  &\mathrm{on \ } \omega\subseteq \mathbb S^{N-1}\\
v=0  &\mathrm{on \ } \mathbb S^{N-1}\setminus\omega,
\end{cases}
\end{equation}
where $\mathbf{e}=(0,\dots,0,1)$, $\mathbb{S}^N_+:=\partial B_1 \cap \R^{N+1}_+$, $\mathbb{S}^{N-1}:= \partial \mathbb{S}^N_+=\partial B_1 \cap 
\{(x,t)\in\R^{N+1}:t=0\}$, and $\omega:= \mathcal C\cap \mathbb{S}^{N-1}$ is the spherical cap spanning the cone. We will denote the variable on $\mathbb S^N$ as $\theta=(\theta',\theta_{N+1})$, with  $\theta'=(\theta_1,...,\theta_{N})$.

In order to write the weak formulation of \eqref{probautov}, we introduce the weighted Sobolev space 
\begin{equation*}
H^{1}(\mathbb S^N_+,\theta_{N+1}^{1-2s})=\left\{\psi\in L^2(\mathbb S^N_+,\theta_{N+1}^{1-2s}): \nabla_{\mathbb S^{N}}\psi\in  L^2(\mathbb S^N_+,\theta_{N+1}^{1-2s})\right\}
\end{equation*}
where $L^2(\mathbb S^N_+,\theta_{N+1}^{1-2s})=\{\psi:\mathbb S^N_+\to\R\text{ measurable: }\int_{\mathbb S^N_+}\theta_{N+1}^{1-2s}\psi^2(\theta)\,dS<\infty\}$, endowed with the norm 
\begin{equation*}
    \|\psi\|_{H^{1}(\mathbb S^N_+,\theta_{N+1}^{1-2s})}=\left(\int_{\mathbb S^N_+}\theta_{N+1}^{1-2s}\left(
    |\nabla_{\mathbb S^{N}}\psi|^2+\psi^2\right)\,dS    
    \right)^{\!\!1/2}.
\end{equation*}
Here $dS$ denotes the volume element on $N$-dimensional spheres.
We also consider the closed subspace of $H^{1}(\mathbb S^N_+,\theta_{N+1}^{1-2s})$
\begin{equation}\label{Vomega}
    V_\omega:=\{\psi\in H^{1}(\mathbb S^N_+,\theta_{N+1}^{1-2s}) \, : \, \mathop{\rm Tr}\psi\equiv 0 \ \mathrm{in} \ \mathbb S^{N-1}\setminus \omega\},
\end{equation}
where, with a slight abuse of notation, we denote again by $\mathop{\rm Tr}$ the trace operator 
\begin{equation}\label{eq:traccia-sfera}
\mathop{\rm Tr}:H^{1}(\mathbb S^N_+,\theta_{N+1}^{1-2s})\to L^2(\mathbb S^{N-1}),
\end{equation}
see \cite{FalFel14}*{Lemma 2.2}.
A real number $\mu$ is said to be an eigenvalue of problem \eqref{probautov} if there exists $v\in V_\omega\setminus\{0\}$ such that 
\begin{equation*}
\int_{\mathbb{S}^N_+} \theta_{N+1}^{1-2s}\nabla_{\mathbb S^{N}} v \cdot \nabla_{\mathbb S^{N}}\varphi\, dS-\lambda \kappa_s \int_{\omega}\mathop{\rm Tr}v  \mathop{\rm Tr}\varphi \, dS'=\mu\int_{\mathbb{S}^N_+} \theta_{N+1}^{1-2s} v \varphi\, dS\quad \text{for every $\varphi\in V_\omega$},
\end{equation*}
$dS'$ denoting the volume element on $(N-1)$-dimensional spheres.
Proposition \ref{propautovalori} guarantees the existence of an increasing diverging sequence of real eigenvalues 
$\{\mu_{j}\}_{j\geq1}$ of problem \eqref{probautov} (depending on $\lambda$ and $\mathcal C$, and counted with their multiplicities) and an orthonormal basis $\{\psi_{j}\}_{j\geq1}$ of $L^2(\mathbb{S}^N_+, \theta_{N+1}^{1-2s})$ composed by eigenfunctions. To be more precise, 
if
\begin{equation*}
     \mu_{\bar{j}-1}<\mu_{\bar{j}}= \mu_{\bar{j}+1}= \cdots= \mu_{\bar{j}+\bar{m} -1} < \mu_{\bar{j}+\bar{m}},
\end{equation*}
that is $\bar{m}$ is the multiplicity of the eigenvalue $\mu_{\bar{j}}$ with $\bar{j}\geq 1$, the finite family $\{\psi_j\}_{j= \bar{j},\dots, \bar{j} + \bar{m}-1 }$ is chosen in such a way that it is an orthonormal basis of the eigenspace associated with the eigenvalue $\mu_{\bar{j}}$.

Following the approach developed by Garofalo and Lin in \cite{GarLin86} to prove unique continuation for second order elliptic equations, in Section \ref{sezionemonotonia} we consider a non-trivial solution $U$ to \eqref{eq1EXT} and derive a monotonicity formula for the associate Almgren frequency function 
\begin{equation}\label{Nintro}
 \mathcal N(r)= \frac{r^{2s-N}\left(\int_{B^+_r}t^{1-2s}|\nabla U|^2\,dz-\kappa_s\int_{\Omega\cap B'_r} \left(h+\frac{\lambda}{|x|^{2s}}\right)|\mathop{\rm Tr} U|
 ^2\,dx\right)}{r^{2s-N-1}\int_{\R^{N+1}_+\cap \partial B_r } t^{1-2s} U^2\, dS},
\end{equation}
which allows us to deduce the existence and finiteness of the limit $\gamma= \lim_{r\to 0^+} \mathcal N(r)$. 
We start  from this result to perform a  blow-up analysis, proving the convergence  of  the family of rescaled and renormalized solutions
\begin{equation*}
w^\tau(z):= \frac{U(\tau z)}{\sqrt{H(\tau)}},\qquad\tau>0,
\end{equation*}
to a non-trivial homogeneous limit profile,
where the height function $r\mapsto H(r)$ is the denominator of~\eqref{Nintro}.
Furthermore, in Lemma \ref{lemmaconvblow} we prove that there exists an eigenvalue $\mu_{j_0}$ of problem \eqref{probautov} such that 
\begin{equation}\label{gammauguale}
    \gamma=\gamma_{j_0},
\end{equation}
where, for every $j\in\mathbb N\setminus\{0\}$ and $\lambda<\Lambda_{N,s}(\mathcal C)$, we define
\begin{equation}\label{eq:gamma-h0}
   \gamma_{j}:= \sqrt{\left(\frac{N-2s}{2}\right)^2+\mu_{j}}-\frac{N-2s}{2}.
\end{equation}
Letting $m\in\mathbb N\setminus\{0\}$ be the multiplicity of the eigenvalue $\mu_{{j}_0} $, $j_0\geq1$ can be chosen  in such a way that 
\begin{equation}\label{eq:multiplicity}
    \mu_{j_0-1}   <  \mu_{j_0} =\mu_{j_0+1} =\cdots=\mu_{j_0+m-1} <    \mu_{j_0+m} 
\end{equation}
Letting $j_0$ and $m$ be as in \eqref{gammauguale}--\eqref{eq:gamma-h0} and \eqref{eq:multiplicity}, for every $j=j_0,\dots, j_0+m-1$ and  $\tau\in (0,1)$, we define
\begin{equation}\label{phi}
\varphi_j(\tau):= \int_{\mathbb{S}^N_+}\theta_{N+1}^{1-2s} U(\tau \theta) \psi_j(\theta)\, dS
\end{equation}
and 
\begin{equation}\label{U}
   \Upsilon_j(\tau):= \kappa_s \int_{\mathcal C\cap B'_\tau} h(x)\mathop{\rm Tr} U(x) \mathop{\rm Tr}\psi_j\left(\frac{x}{|x|}\right) \, dx. 
\end{equation}
The main results of the present paper are contained in the following two theorems, which provide  the asymptotics at $0$ of solutions to \eqref{eq1EXT} and \eqref{eq1} respectively.
\begin{theorem}\label{asimestens}
Let $U\in H^1(B^+_1,t^{1-2s})$ be a non-trivial weak solution to \eqref{eq1EXT}.
Then there exists an eigenvalue $\mu_{j_0} $ of problem \eqref{probautov} such that
\eqref{gammauguale}--\eqref{eq:gamma-h0} is satisfied, with $\gamma= \lim_{r\to 0^+} \mathcal N(r)$ and $\mathcal N$ being as in \eqref{Nintro}. Moreover, if $m\in\mathbb N\setminus\{0\}$ is the multiplicity of $\mu_{j_0} $,  $j_0\geq1$ is chosen as in \eqref{eq:multiplicity}, and  $\{\psi_j\}_{j=j_0,\dots, j_0+m-1}$ is an orthonormal basis of the eigenspace associated with $\mu_{j_0} $, then  
    \begin{equation}\label{risultatofinale-ext}
        \frac{U(\tau z)}{\tau^{\gamma}}\to |z|^\gamma\sum _{j=j_0}^{j_0+m-1}\beta_j \psi_j\left(\frac{z}{|z|}\right)\quad \text{in $H^1(B^+_1,t^{1-2s})$ as $\tau\to 0^+$},  
    \end{equation}
    where $(\beta_{j_0}, \dots, \beta_{j_0+m-1}) \in \mathbb{R}^m\setminus\{0\}$ and, 
    for every $j=j_0, \dots, j_0+m-1$,
    \begin{align}\label{betah}
     \beta_j=\frac{\varphi_j(R)}{R^\gamma} &+ \frac{N+\gamma-2s}{N+2\gamma-2s}\int_0^R t^{-N-1+2s-\gamma}\Upsilon_j(t)\, dt\\
    \notag &+\frac{\gamma R^{-N+2s-2\gamma}}{N+2\gamma-2s} \int_0^R t^{\gamma-1}\Upsilon_j(t)\, dt\quad \text{for all $R\in (0,1)$},
    \end{align}
    with $\varphi_j$ and   $\Upsilon_j$ defined  in \eqref{phi} and \eqref{U}, respectively. 
\end{theorem}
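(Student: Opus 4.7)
The plan is to build on Section \ref{sezionemonotonia} and Lemma \ref{lemmaconvblow}, which together already establish the existence of $\gamma = \lim_{r\to 0^+}\mathcal N(r)$, its identification as $\gamma_{j_0}$ with $\mu_{j_0}$ an eigenvalue of \eqref{probautov}, and the subsequential convergence of the normalized blow-ups $w^\tau(z) = U(\tau z)/\sqrt{H(\tau)}$ to a non-trivial $\gamma$-homogeneous profile $|z|^\gamma\Psi(z/|z|)$ with $\Psi$ in the eigenspace spanned by $\{\psi_{j_0},\dots,\psi_{j_0+m-1}\}$. The remaining work is to upgrade from subsequential to full convergence, to pin down $\Psi$ explicitly via the formula \eqref{betah}, to trade the normalization $\sqrt{H(\tau)}$ for $\tau^\gamma$, and to strengthen convergence to the $H^1(B_1^+,t^{1-2s})$ topology.

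The natural tool is Fourier decomposition on the orthonormal basis $\{\psi_j\}_{j\geq 1}$ of $L^2(\mathbb S^N_+,\theta_{N+1}^{1-2s})$, with coefficients $\varphi_j$ as in \eqref{phi}. Testing \eqref{extensionweak} against $V(z) = \eta(|z|)\psi_j(z/|z|)$ with $\eta\in C^\infty_c((0,1))$ and processing the angular part through the weak form of \eqref{probautov}, the singular term $\lambda\kappa_s|x|^{-2s}\mathop{\rm Tr}U\cdot \mathop{\rm Tr}V$ on the right of \eqref{extensionweak} exactly cancels the $\lambda$-contribution that arises on the left, and what remains is the distributional identity
\begin{equation*}
    -\bigl(r^{N+1-2s}\varphi_j'(r)\bigr)' + \mu_j\, r^{N-1-2s}\varphi_j(r) = \Upsilon_j'(r)\qquad\text{on }(0,1),
\end{equation*}
with $\Upsilon_j$ as in \eqref{U}. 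The homogeneous equation is of Euler type, and its fundamental system is $\{r^{\gamma_j},\; r^{-(N-2s)-\gamma_j}\}$, the exponents being the two roots of $\alpha^2+(N-2s)\alpha-\mu_j=0$ with $\gamma_j$ the positive one by \eqref{eq:gamma-h0}.

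Variation of parameters then produces the representation $\varphi_j(r) = A_j r^{\gamma_j} + B_j r^{-(N-2s)-\gamma_j} + (\text{particular integrals of }\Upsilon_j')$; one integration by parts converts $\Upsilon_j'$ into $\Upsilon_j$ and produces precisely the two integral kernels $t^{-N-1+2s-\gamma}$ and $t^{\gamma-1}$ that figure in \eqref{betah}. The $H^1(B_1^+,t^{1-2s})$ regularity of $U$, combined with the regularity \eqref{eq:ipo-h} of $h$ and the doubling-type bounds on $\mathop{\rm Tr}U$ coming from the monotonicity formula, excludes the singular mode $r^{-(N-2s)-\gamma_j}$ (forcing $B_j=0$ and a vanishing condition on the $t^\gamma$-integral as $r\to 0$), and evaluating the resulting identity at $r=R\in(0,1)$ isolates the regular coefficient as the right-hand side of \eqref{betah}, with $\beta_j := A_j$. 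The pointwise asymptotics $\varphi_j(r)/r^{\gamma_j}\to\beta_j$ follow.

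Finally, Parseval's identity on $\mathbb S^N_+$ decomposes $H(r)$ and the Dirichlet energy of $U$ into sums of terms built from $\varphi_j$ and $\varphi_j'$. For $j\in\{j_0,\dots,j_0+m-1\}$ one has $\varphi_j(r)/r^\gamma\to\beta_j$; for $j\geq j_0+m$ the strict gap $\gamma_j>\gamma$ gives $\varphi_j(r)/r^\gamma\to 0$; while for $j<j_0$ the identification $\gamma=\gamma_{j_0}$ from Lemma \ref{lemmaconvblow} together with the blow-up limit being non-trivial forces the $r^{\gamma_j}$ coefficient of $\varphi_j$ to vanish, so that the remaining contribution (coming only from $\Upsilon_j$) is $o(r^\gamma)$. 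Summing these mode-wise convergences, upgraded from $L^2$ to $H^1$ by the ODE for $\varphi_j$ and the uniform tail bound, yields the full $H^1(B_1^+,t^{1-2s})$ convergence of $U(\tau z)/\tau^\gamma$ in \eqref{risultatofinale-ext}; the replacement of $\sqrt{H(\tau)}$ by $\tau^\gamma$ follows from $H(\tau)\sim\tau^{2\gamma}$, itself a consequence of the non-triviality of $\Psi$. The hardest step is the rigorous derivation of the distributional ODE in the presence of the Hardy potential and the conical geometry, together with the fine integrability control on $\Upsilon_j$ near $0$ needed to legitimize the variation-of-parameters formula \eqref{betah} and to eliminate the singular mode.
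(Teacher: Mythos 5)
Your proposal follows the same overall strategy as the paper's proof: Fourier decomposition of $U(\tau\theta)$ in the basis $\{\psi_j\}$, the Euler-type ODE for $\varphi_j$ with inhomogeneity $\Upsilon_j'$, variation of parameters to produce the integral formula, elimination of the singular mode $r^{-(N-2s)-\gamma_j}$ by finite-energy considerations, and coupling with the blow-up analysis of Lemma~\ref{lemmaconvblow}. The ODE you wrote and the identification of the fundamental system are exactly what the paper derives.

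Two execution points where your route diverges from the paper and where the hand-waving hides real work. First, the final step \emph{``summing these mode-wise convergences, upgraded from $L^2$ to $H^1$ by the ODE for $\varphi_j$ and the uniform tail bound''} is a genuinely different (and harder) route than the paper's. The paper does not resum the Fourier series at all: Lemma~\ref{lemmaconvblow} already delivers \emph{strong $H^1(B_1^+,t^{1-2s})$ convergence along subsequences}, so all that remains is to show the limit is independent of the subsequence; this follows because $\beta_\ell=\lim_k\varphi_\ell(\tau_{n_k})/\tau_{n_k}^\gamma$ equals the right-hand side of \eqref{betah}, which is manifestly subsequence-free, and then Urysohn's subsequence principle closes the argument. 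Your mode-wise resummation would instead require uniform tail control on $\sum_{j\ge J}|\varphi_j(r)/r^\gamma|^2$ and on the corresponding gradient terms, which you neither state nor prove; the compactness built into Lemma~\ref{lemmaconvblow} makes this unnecessary.

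Second, the claim that $H(\tau)\sim\tau^{2\gamma}$ is \emph{``itself a consequence of the non-triviality of $\Psi$''} is too terse to be a proof. The blow-up profile $\Psi$ is automatically non-trivial by the normalization $\int_{\partial^+B_1^+}t^{1-2s}|w^\tau|^2\,dS=1$, so non-triviality of $\Psi$ alone says nothing about the rate of $H$. The paper establishes $\lim_{r\to0^+}H(r)/r^{2\gamma}>0$ by contradiction: if the limit were $0$, then every $\beta_j$ for $j=j_0,\dots,j_0+m-1$ would vanish, the expansion \eqref{sviluppocoeff} would give $\varphi_j(\tau)=O(\tau^{\gamma+2s-N/p})$, and combining with the lower bound \eqref{seconda} on $H$ one gets $(w^\tau,\psi_j)_{L^2(\mathbb S^N_+,\theta_{N+1}^{1-2s})}=o(1)$, which contradicts the convergence of $w^{\tau_{n_k}}$ to a normalized eigenfunction $\psi$ with $\|\psi\|^2=\sum_j(\psi,\psi_j)^2=1$. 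This interplay between the ODE expansion, the quantitative lower bound on $H$, and the blow-up is the crux of the non-degeneracy, and your sketch skips it.
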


When proving the monotonicity formula for the extended problem, which underlies the proof of Theorem~\ref{asimestens} as described above, one immediately encounters the difficulty that the point, around which we construct the frequency function, is located  on the  ``boundary of the boundary'', namely on the boundary of the set $\Omega$, which in turn lies on the boundary of the half-space, once the extension is made. This difficulty had been addressed in \cite{DelFelVit22} through an approximation procedure: the $N$-dimensional region $\R^N\setminus \Omega$, where Dirichlet boundary conditions are imposed, is locally approximated  with smooth $(N +1)$-dimensional regions. Then, for  a sequence of approximating solutions, enough regularity is available to derive Pohozaev type identities and, consequently, to differentiate the  Almgren type frequency. Passing from $C^{1,1}$-domains,  as those considered in \cite{DelFelVit22}, to the conical sets we are treating in the present paper,  introduces an additional difficulty, due to  the non-regularity of the original domain. This requires the development in Section \ref{sec:approx} of a more sophisticated approximation and regularization procedure, which also smooths the corner of the cone.

Passing to traces,  Theorem \ref{asimestens} finally allows us to obtain the following result for problem \eqref{eq1}.

\begin{theorem}\label{asimu}
    Let $u\in \mathcal{D}^{s,2}(\mathbb{R}^N)$ be a non-trivial weak solution to \eqref{eq1}. Then there exist $j_0\in\mathbb N\setminus\{0\}$ and an eigenfunction $Y\in H^{1}(\mathbb S^N_+,\theta_{N+1}^{1-2s})\setminus\{0\}$ of problem \eqref{probautov} associated with the  eigenvalue $\mu_{j_0} $, such that  
    \begin{equation}\label{risultatofinale}
        \tau^{-\gamma_{j_0} }u(\tau x)\to |x|^{\gamma_{j_0} }\mathop{\rm Tr}Y\left(\frac{x}{|x|}\right)\quad \text{in $H^s(B'_1)$ as $\tau\to 0^+$},  
    \end{equation}
    where $\gamma_{j_0} $ is defined in \eqref{eq:gamma-h0}.
\end{theorem}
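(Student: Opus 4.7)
The plan is to reduce the fractional asymptotic result for $u$ to the extension asymptotic result of Theorem~\ref{asimestens} by passing from $H^1(B_1^+,t^{1-2s})$ convergence in the half-ball to $H^s(B'_1)$ convergence on the flat boundary via the trace operator in \eqref{eq:traccia-H1}.

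First, I would associate to $u\in\mathcal{D}^{s,2}(\mathbb{R}^N)$ its Caffarelli--Silvestre extension $U(x,t):=(P_s(\cdot,t)\ast u)(x)$ for $(x,t)\in\mathbb{R}^{N+1}_+$, with $P_s$ the Poisson kernel of \cite{CafSil07}. Standard arguments, already adapted to closely related fractional boundary settings in \cite{FalFel14,DelFelVit22}, yield $U\in\mathcal{D}^{1,2}(\mathbb{R}^{N+1}_+,t^{1-2s})$, $\mathop{\rm Tr}U=u$, and the equivalence of the weak formulations \eqref{weakformula} and \eqref{extensionweak-intro}. Localizing in the half-ball $B_1^+$ shows that $U\in H^1(B_1^+,t^{1-2s})$ is a non-trivial weak solution to \eqref{eq1EXT} in the sense of \eqref{extensionweak}.

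Next, I would invoke Theorem~\ref{asimestens}: it produces an index $j_0\in\mathbb{N}\setminus\{0\}$, a multiplicity $m$, coefficients $(\beta_{j_0},\dots,\beta_{j_0+m-1})\in\mathbb{R}^m\setminus\{0\}$, and an orthonormal basis $\{\psi_j\}_{j=j_0}^{j_0+m-1}$ of the eigenspace of \eqref{probautov} associated with $\mu_{j_0}$, such that
\begin{equation*}
\tau^{-\gamma_{j_0}}U(\tau z)\to |z|^{\gamma_{j_0}}Y\!\left(\frac{z}{|z|}\right)\quad\text{in }H^1(B_1^+,t^{1-2s})\text{ as }\tau\to 0^+,
\end{equation*}
where $Y:=\sum_{j=j_0}^{j_0+m-1}\beta_j\psi_j\in H^1(\mathbb{S}^N_+,\theta_{N+1}^{1-2s})\setminus\{0\}$ is, by orthonormality of the $\psi_j$, a non-zero eigenfunction of \eqref{probautov} associated with $\mu_{j_0}$.

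To conclude, I would apply the continuous trace map $\mathop{\rm Tr}:H^1(B_1^+,t^{1-2s})\to H^s(B_1')$ from \eqref{eq:traccia-H1} to the rescaled sequence $z\mapsto\tau^{-\gamma_{j_0}}U(\tau z)$; since $\mathop{\rm Tr}U=u$, the previous convergence transforms into
\begin{equation*}
\tau^{-\gamma_{j_0}}u(\tau x)\to |x|^{\gamma_{j_0}}\mathop{\rm Tr}Y\!\left(\frac{x}{|x|}\right)\quad\text{in }H^s(B'_1)\text{ as }\tau\to 0^+,
\end{equation*}
which is precisely \eqref{risultatofinale}. The main (and essentially only) technical obstacle in this reduction is the careful matching between the global weak formulation \eqref{weakformula} on $\mathbb{R}^N$ and the local extension formulation \eqref{extensionweak} on $B_1^+$, including the compatibility of the corresponding test function spaces and of the two trace operators \eqref{eq:trace}--\eqref{eq:traccia-H1}. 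This boils down to a density/restriction argument that exploits the support of $u$ in $\overline{\Omega}$ and the $C^{1,1}$ regularity of $\partial\mathcal{C}$ away from the vertex, ensured by \eqref{cone_property}. All the analytic work (existence of the limit frequency $\gamma=\gamma_{j_0}$, non-triviality of the blow-up, eigenfunction identification with the explicit coefficients \eqref{betah}) is supplied directly by Theorem~\ref{asimestens}.
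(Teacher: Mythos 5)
Your proposal follows the same route as the paper: pass to the Caffarelli--Silvestre extension $U=\mathcal H(u)$, apply Theorem~\ref{asimestens}, and push the $H^1(B_1^+,t^{1-2s})$ convergence through the continuous trace map \eqref{eq:traccia-H1} to obtain \eqref{risultatofinale} in $H^s(B_1')$. This is precisely the paper's proof, and all the main ingredients are in place.

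There is one small step you assert but do not justify: that $U$ is \emph{non-trivial in $B_1^+$}, which is needed to apply Theorem~\ref{asimestens}. From $u\not\equiv 0$ in $\mathbb{R}^N$ and $\mathop{\rm Tr}U=u$ one only gets $U\not\equiv 0$ in $\mathbb{R}^{N+1}_+$; since $u$ could a priori vanish identically on $B_1'$ (say, if $\Omega$ lies far from the origin away from the vertex), the restriction $U|_{B_1^+}$ is not obviously non-zero. The paper closes this gap by invoking the classical interior strong unique continuation principle for the degenerate/singular operator $\mathrm{div}(t^{1-2s}\nabla\cdot)$ with $A_2$ Muckenhoupt weight (Garofalo--Lin): $U$ solves $\mathrm{div}(t^{1-2s}\nabla U)=0$ in all of $\mathbb{R}^{N+1}_+$, so if it vanished on the open set $B_1^+$ it would vanish identically, contradicting $u\not\equiv 0$. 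You should add this sentence; with it, your argument is complete.
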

Theorem \ref{asimu} is proved by applying Theorem \ref{asimestens} to the Caffarelli-Silvestre extension $U=\mathcal H(u)$;  in particular, the eigenfunction $Y$ appearing in \eqref{risultatofinale} turns out to be  the term $\sum _{j=j_0}^{j_0+m-1}\beta_j \psi_j$ appearing in right hand side of \eqref{risultatofinale-ext}.

A relevant consequence of the previous asymptotics are the following strong unique continuation results from  $0\in \partial\Omega$, for solutions to \eqref{eq1EXT} and \eqref{eq1}, respectively. 
\begin{corollary}\label{corollario1}
    If $U\in H^1(B^+_1,t^{1-2s})$ is a solution to \eqref{eq1EXT} having infinite vanishing order at 0, i.e.  $U(z)=O(|z|^k)$ as $z\to 0$ for every $k\in\mathbb{N}$, then $U\equiv 0$ in $B_1^+$.
\end{corollary}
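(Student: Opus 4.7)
My plan is to argue by contradiction, using Theorem \ref{asimestens} as a direct black box. Suppose $U\in H^1(B_1^+,t^{1-2s})$ is a weak solution of \eqref{eq1EXT} that vanishes to infinite order at $0$ but is not identically zero on $B_1^+$. Then Theorem \ref{asimestens} applies and produces an eigenvalue $\mu_{j_0}$ of \eqref{probautov}, an exponent $\gamma=\gamma_{j_0}\ge 0$ as in \eqref{eq:gamma-h0}, a multiplicity $m\geq 1$, and a non-zero vector $(\beta_{j_0},\dots,\beta_{j_0+m-1})\in\R^m$ such that
\begin{equation*}
\tau^{-\gamma}U(\tau\,\cdot\,)\;\xrightarrow[\tau\to 0^+]{H^1(B_1^+,t^{1-2s})}\;|z|^\gamma\sum_{j=j_0}^{j_0+m-1}\beta_j\,\psi_j(z/|z|)\,\not\equiv\,0,
\end{equation*}
and I will derive a contradiction from this non-trivial blow-up limit.

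The first key step is to extract from this $H^1$-convergence a precise lower bound on the height function $H(\tau)=\int_{\mathbb{S}^N_+}\theta_{N+1}^{1-2s}\,U^2(\tau\theta)\,dS$ that appears (after rescaling) as the denominator of \eqref{Nintro}. The continuity of the restriction map $H^1(B_1^+,t^{1-2s})\to L^2(\mathbb{S}^N_+,\theta_{N+1}^{1-2s})$ on generic spheres, together with the $L^2(\mathbb{S}^N_+,\theta_{N+1}^{1-2s})$-orthonormality of the eigenfunctions $\{\psi_j\}$ guaranteed by Proposition \ref{propautovalori}, then yields
\begin{equation*}
\tau^{-2\gamma}H(\tau)\;\longrightarrow\;\sum_{j=j_0}^{j_0+m-1}\beta_j^{\,2}\;>\;0\qquad\text{as }\tau\to 0^+.
\end{equation*}
The second step is to observe that the infinite vanishing hypothesis $U(z)=O(|z|^k)$ for every $k\in\mathbb N$ translates, after squaring and integrating over $\mathbb{S}^N_+$ with weight $\theta_{N+1}^{1-2s}$, into $H(\tau)\le \widetilde C_k\,\tau^{2k}$ for each $k$ and each sufficiently small $\tau$. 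Taking any $k>\gamma$ is incompatible with the asymptotic of the previous display, producing the desired contradiction and forcing $U\equiv 0$ on $B_1^+$.

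The main technical obstacle I foresee is rigorously justifying the pointwise-to-integral passage needed to go from $|U(z)|^2\le C_k|z|^{2k}$ to a bound on $H(\tau)$, since a priori $U$ is only an element of the weighted Sobolev space and its trace on a generic half-sphere is not automatically controlled pointwise. This can be handled by combining interior regularity of $U$ in $\{t>0\}$, coming from standard theory for the degenerate operator $\mathrm{div}(t^{1-2s}\nabla\cdot)$ with $A_2$-Muckenhoupt weight, with the trace continuity \eqref{eq:traccia-H1} on the thin space, and, if needed, by reinterpreting the hypothesis as an essential-supremum statement $\operatorname*{ess\,sup}_{B_r^+}|U|=O(r^k)$; both routes are standard in this framework and suffice to substitute the pointwise bound into the definition of $H(\tau)$ and close the argument.
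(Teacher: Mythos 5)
Your proof is correct and takes essentially the same approach the paper intends: argue by contradiction, apply Theorem \ref{asimestens} to produce a non-trivial $\gamma$-homogeneous blow-up limit, note via trace continuity on $\partial^+B_1^+$ that $\tau^{-2\gamma}H(\tau)\to\sum_j\beta_j^2>0$, and observe that infinite vanishing order forces $H(\tau)=O(\tau^{2k})$ for every $k$, giving a contradiction by choosing $k>\gamma$. (Your incidental remark that $\gamma\geq 0$ need not hold in general since $\mu_{j_0}$ may be negative when $\lambda>0$, but this is immaterial: the choice $k>\gamma$ is always possible and the contradiction is unaffected.)
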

\begin{corollary}\label{corollario2}
    If $u\in \mathcal{D}^{s,2}(\mathbb{R}^N)$ is a solution to \eqref{eq1} having infinite vanishing order at 0, i.e. $u(x)=O(|x|^k)$ as $x\to 0$ for every $k\in\mathbb{N}$, then $u\equiv 0$ in $\R^N$.
\end{corollary}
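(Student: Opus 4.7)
The plan is to argue by contradiction using Theorem~\ref{asimu}. Suppose $u\not\equiv 0$ solves \eqref{eq1} and vanishes to infinite order at the origin, and let $j_0\geq 1$ and a non-trivial eigenfunction $Y$ of \eqref{probautov} be furnished by Theorem~\ref{asimu}, so that
\begin{equation*}
\tau^{-\gamma_{j_0}}u(\tau x)\to |x|^{\gamma_{j_0}}\mathop{\rm Tr}Y\!\left(\tfrac{x}{|x|}\right)\qquad \text{in }H^s(B'_1)\text{ as }\tau\to 0^+.
\end{equation*}
The proof proceeds in two steps: first, the pointwise vanishing of $u$ forces this $H^s$-limit to be zero, hence $\mathop{\rm Tr}Y\equiv 0$ on $\omega$; second, combined with the structure of the eigenvalue problem, the resulting overdetermined data for $Y$ forces $Y\equiv 0$, yielding the contradiction.

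For the first step, from $|u(y)|\leq C_k|y|^k$ in a neighborhood of $0$ (for every $k\in\mathbb N$), one has, for any fixed $k>\gamma_{j_0}$,
\begin{equation*}
\|\tau^{-\gamma_{j_0}}u(\tau\,\cdot\,)\|^2_{L^2(B'_1)}=\tau^{-N-2\gamma_{j_0}}\int_{B'_\tau}|u(y)|^2\,dy\leq C_k^2\,\tau^{-N-2\gamma_{j_0}}\int_{B'_\tau}|y|^{2k}\,dy=O\!\left(\tau^{2(k-\gamma_{j_0})}\right),
\end{equation*}
which vanishes as $\tau\to 0^+$. Since $H^s(B'_1)\hookrightarrow L^2(B'_1)$, uniqueness of limits forces $|x|^{\gamma_{j_0}}\mathop{\rm Tr}Y(x/|x|)\equiv 0$ in $L^2(B'_1)$, and hence $\mathop{\rm Tr}Y\equiv 0$ on $\omega$.

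For the second step, since $Y\in V_\omega$ (see \eqref{Vomega}) one already has $\mathop{\rm Tr}Y\equiv 0$ on $\mathbb S^{N-1}\setminus\omega$, so Step~1 yields $\mathop{\rm Tr}Y\equiv 0$ on the whole of $\mathbb S^{N-1}$. Substituting this into the boundary condition on $\omega$ in \eqref{probautov} gives the additional vanishing $\lim_{\theta_{N+1}\to 0^+}\theta_{N+1}^{1-2s}\nabla_{\mathbb S^N}Y\cdot \mathbf{e}\equiv 0$ on $\omega$. To leverage this overdetermination, I would consider the $\gamma_{j_0}$-homogeneous extension $Z(z):=|z|^{\gamma_{j_0}}Y(z/|z|)$ in $B^+_1$: since $\gamma_{j_0}$ and $\mu_{j_0}$ are matched through \eqref{eq:gamma-h0}, $Z$ solves $\operatorname{div}(t^{1-2s}\nabla Z)=0$ in $B^+_1$ and has both $\mathop{\rm Tr}Z\equiv 0$ on $B'_1$ and $\lim_{t\to 0^+}t^{1-2s}\partial_t Z\equiv 0$ on $\mathcal C\cap B'_1$. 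Boundary unique continuation for the $A_2$-weighted Laplacian, applied at any smooth (non-vertex) point of $\mathcal C\cap B'_1$, where both the Dirichlet and the weighted Neumann trace of $Z$ vanish, forces $Z\equiv 0$ in a neighborhood, and interior unique continuation then propagates this to $Z\equiv 0$ in $B^+_1$; thus $Y\equiv 0$, contradicting Theorem~\ref{asimu}.

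The main technical obstacle is the boundary unique continuation used in Step~2, because of the degenerate weight $t^{1-2s}$; however, results of this type for $A_2$-Muckenhoupt weighted degenerate elliptic operators are by now standard. An alternative route would be to apply Corollary~\ref{corollario1} to the Caffarelli--Silvestre extension $U=\mathcal H(u)$, but I do not see how to transfer the infinite vanishing of $u$ at the origin to the extension $U$, since $U(x,t)$ is a Poisson integral of $u$ over all of $\R^N$ and depends globally on $u$.
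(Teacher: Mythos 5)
Your proof is correct and takes essentially the same approach as the paper, only re‑ordering the two logical ingredients: the paper first establishes (by the same reflection and $A_2$‑weighted unique continuation argument you outline in Step~2) that the right‑hand side of \eqref{risultatofinale} cannot vanish, and then observes that infinite vanishing order would force it to vanish; you instead deduce $\mathop{\rm Tr}Y\equiv 0$ directly from the $L^2$ scaling estimate in Step~1 (a computation the paper leaves implicit) and then derive the contradiction $Y\equiv 0$. The only presentational difference in Step~2 is that the paper frames the unique continuation step as a trivial extension of $\Psi$ across $\{t=0\}$ on $\mathcal C$ followed by \emph{interior} unique continuation for $\operatorname{div}(|t|^{1-2s}\nabla\cdot)$, rather than invoking a boundary unique continuation principle; this neatly sidesteps the technical obstacle you flag, since the vanishing of both the Dirichlet trace and the weighted Neumann trace on $\mathcal C$ is precisely what makes the zero extension a genuine distributional solution in the doubled domain, to which the classical interior results \cite{GarLin86,TaoZhang2008} apply directly. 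Your closing observation about why Corollary~\ref{corollario1} cannot be applied to $U=\mathcal H(u)$ naively is also consistent with the route the paper chooses.
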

    While the statement of Corollary \ref{corollario1} straightforwardly follows from Theorem \ref{asimestens} arguing by contradiction,  Corollary \ref{corollario2} can be deduced from  Theorem \ref{asimu}, once we made sure that the right-hand side in \eqref{risultatofinale} is not trivial. 
    This is true because, otherwise, the function $\Psi(z)=|z|^{\gamma_{j_0} }Y(z/|z|)$  would satisfy the equation $\mathrm{div}(t^{1-2s}\nabla \Psi)=0$ in $\mathcal C\times(0,+\infty)$, with both homogeneous Dirichlet and  Neumann conditions on $\mathcal C$. Thus, the trivial extension of $\Psi$ to $\mathcal C\times\R$  would be a solution to 
    $\mathrm{div}(|t|^{1-2s}\nabla\Psi)=0$,  violating classical unique continuation principles for elliptic operators with $A_2$ Muckenhoupt weights, see e.g.  \cites{GarLin86,TaoZhang2008}.

We observe that, for Theorem \ref{asimu}, and consequently Corollary \ref{corollario2}, to hold, it suffices to impose the condition that the solution $u$ of \eqref{eq1} vanishes in the complement of $\Omega$ in a small neighborhood of the vertex of the cone, rather than in the whole  of $\R^N\setminus \Omega$, while still requiring that
$u$ has finite energy in $\R^N$, i.e., belongs to $\mathcal{D}^{s,2}(\mathbb{R}^N)$. This is because, once the extension is made, 
only the extended problem localized in a half-ball centered at $0$ comes into play in the monotonicity and blow-up argument.

\subsection*{Structure of the paper}
In Section \ref{sec:2} we collect some preliminary results on the functional setting and useful inequalities. In particular, we provide some Hardy-type inequalities on cones 
and study the monotonicity  of  best constants with respect to inclusion of cones, providing results that we consider of independent interest. In Section \ref{sec:2} we also study the Laplace-Beltrami type spectral problem \eqref{probautov}, whose eigenvalues provide a quantization of the local behaviour of weak solutions to \eqref{eq1EXT}. In Section \ref{sec:3} we construct the regularized problems \eqref{eqappro} on suitably smoothed domains, whose solutions converge to a given weak solution of the original extended problem \eqref{eq1EXT}. This procedure allows us to prove a Pohozaev type inequality in Proposition \ref{lapoho}. In Section \ref{sezionemonotonia} we study the Almgren frequency associated to problem \eqref{eq1EXT} and develop a monotonicity argument,  which allows us to perform a blow-up analysis for scaled solutions. This finally leads to the proof of Theorems \ref{asimestens} and \ref{asimu}.

\section{Preliminaries}\label{sec:2}

In this section, we present some preliminaries regarding the weak formulation of the problem, fractional Hardy inequality on cones, and the weighted spherical eigenvalue problem with mixed boundary conditions which will play a role in the classification of blow-up profiles.

\subsection{Weak solutions}\label{sec:weak}
First, we give the notion of weak solutions to problem \eqref{eq1}. Let us recall that, for any $s\in (0,1)$, the fractional Laplacian $(-\Delta)^s $ is a non-local operator from the Schwartz space $ \mathcal{S}(\mathbb{R}^N)$ to $L^2(\mathbb{R}^N)$, defined as
\begin{equation}\label{eq:def-fract-lapl1}
  (-\Delta)^s u (x) = C_{N,s} \lim_{\varepsilon\to 0^+} \int_{\{y\in \R^N:|y-x|>\e\}}\frac{u(x)-u(y)
  }{|x-y|^{N+2s}}\, dy\quad \text{for every $x\in \mathbb{R}^N$},
\end{equation}
where $C_{N,s}$ is a positive constant depending on $N$ and $s$. Equivalently, $(-\Delta)^s$ can be defined by means of the unitary Fourier transform $\mathcal{F}$ on $\mathbb{R}^N$, as follows: 
\begin{equation}\label{eq:def-fract-lapl2}
    (-\Delta)^s u = \mathcal{F}^{-1}(|\xi|^{2s}\mathcal{F}u(\xi)).  
\end{equation} 
Let $\mathcal{D}^{s,2}(\mathbb{R}^N)$ be the completion of $C_c^\infty(\mathbb{R}^N)$ with respect to the norm 
\begin{equation*}
    \Vert u\Vert_{\mathcal{D}^{s,2}(\mathbb{R}^N)}= \left(\int_{\mathbb{R}^N} |\xi|^{2s} |\mathcal{F}u(\xi)|^2\, d\xi\right)^{1/2}.
\end{equation*}
We observe that $\mathcal{D}^{s,2}(\mathbb{R}^N)$ is a Hilbert space endowed with  the scalar product 
\begin{equation*}
    (u,v)_{\mathcal{D}^{s,2}(\mathbb{R}^N)}= \int_{\mathbb{R}^N} |\xi|^{2s} \mathcal{F}u(\xi) \overline{\mathcal{F}v(\xi)}, \quad \text{$u,v \in \mathcal{D}^{s,2}(\mathbb{R}^N)$}.
\end{equation*}
Moreover, $\mathcal{D}^{s,2}(\mathbb{R}^N)$ functions satisfy the following  Sobolev inequality: 
\begin{equation}\label{eq:sobolev}
    \Vert u \Vert_{L^{2^\ast(s)} (\mathbb{R}^N)}\leq S_{N,s} \Vert u \Vert_{\mathcal{D}^{s,2}(\mathbb{R}^N)}\quad \text{for every $u\in \mathcal{D}^{s,2}(\mathbb{R}^N)$}.
\end{equation}
Here $S_{N,s}$ is a positive constant depending only on $N$ and $s$.

The fractional Laplacian can be extended in a natural way as a linear and bounded operator from $\mathcal{D}^{s,2}(\mathbb{R}^N)$ to its dual $(\mathcal{D}^{s,2}(\mathbb{R}^N))^{\ast}$ as follows:
\begin{equation*}
   _{(\mathcal{D}^{s,2}(\mathbb{R}^N))^{\ast}}\langle(-\Delta)^s u, v\rangle_{\mathcal{D}^{s,2}(\mathbb{R}^N)}=(u,v)_{\mathcal{D}^{s,2}(\mathbb{R}^N)}  \quad \text{for every $u,v\in \mathcal{D}^{s,2}(\mathbb{R}^N)$}.
\end{equation*}
Thus, a weak solution to \eqref{eq1} is a function $u\in \mathcal{D}^{s,2}(\mathbb{R}^N)$ such that $u=0$ in $\R^N\setminus \Omega$ and
\begin{equation}\label{eq:weak-form}
     _{(\mathcal{D}^{s,2}(\mathbb{R}^N))^{\ast}}\langle(-\Delta)^s u, v\rangle_{\mathcal{D}^{s,2}(\mathbb{R}^N)} = \int_{\Omega} \left(\frac{\lambda}{|x|^{2s}}+ h\right) u v \, dx\quad \text{for every $v\in C^\infty_c(\Omega)$},
\end{equation}
i.e.  \eqref{weakformula} holds. We observe that the right hand side of \eqref{eq:weak-form} is well defined in view of the fractional Hardy type inequalities discussed in Subsection \ref{subHardy}, assumption \eqref{eq:ipo-h}, and \eqref{eq:sobolev}.

Let us define the functional space $\mathcal{D}^{1,2}(\mathbb{R}^{N+1}_+, t^{1-2s})$ as the completion of $C_c^\infty(\overline{\mathbb{R}^{N+1}_+})$ with respect to the norm
\begin{equation}\label{eq:D12}
    \Vert U\Vert_{\mathcal{D}^{1,2}(\mathbb{R}^{N+1}_+, t^{1-2s})}=\left(\int_{\mathbb{R}^{N+1}_+}t^{1-2s}|\nabla U|^2\, dz\right)^{1/2}.
\end{equation}
It is well known that there exists a linear and bounded trace operator
  \begin{equation}\label{eq:trace}
  \mathop{\rm Tr}:\mathcal D^{1,2}(\R^{N+1}_+,t^{1-2s})\to \mathcal
D^{s,2}(\R^N).
\end{equation}
The extension of some $u\in \mathcal{D}^{s,2}(\mathbb{R}^N)$, in the sense of  Caffarelli and Silvestre \cite{CafSil07}, is the unique solution $U=\mathcal H(u)$ to the minimization problem
\begin{equation}\label{problemadiminimo}
    \min_{\substack{U\in \mathcal{D}^{1,2}(\mathbb{R}^{N+1}_+, t^{1-2s})\\\mathrm{Tr}U=u}}\left\{\int_{\mathbb{R}^{N+1}_+}t^{1-2s}|\nabla U|^2\, dz\right\},
\end{equation}
which weakly solves 
\begin{equation*}
\begin{cases}
\mathrm{div}(t^{1-2s}\nabla U)=0 &\mathrm{in}  \ \mathbb{R}^{N+1}_+\\
-\lim_{t\to0^+}t^{1-2s}\partial_tU=\kappa_s(-\Delta )^s u &\mathrm{on} \ \mathbb{R}^N\times\{0\}\\
\mathop{\rm Tr}U=u &\mathrm{on} \ \mathbb{R}^N\times\{0\},
\end{cases}
\end{equation*}
i.e. 
\begin{equation}\label{eq:debo-est}
\int_{\R^{N+1}_+} t^{1-2s} \nabla U\cdot\nabla V\, dz= \kappa_s
\phantom{a}_{(\mathcal{D}^{s,2}(\mathbb{R}^N))^{\ast}}\langle(-\Delta)^s u, \mathop{\rm Tr}V\rangle_{\mathcal{D}^{s,2}(\mathbb{R}^N)}\quad \text{for all $V\in \mathcal{D}^{1,2}(\mathbb{R}^{N+1}_+, t^{1-2s})$}.
\end{equation}
Then \eqref{extensionweak-intro} is the weak formulation of \eqref{eqEXT}.

\subsection{A fractional Hardy inequality on cones}\label{subHardy}

In this section we prove some fractional Hardy inequalities on cones and discuss monotonicity properties of best constants with respect to the inclusion of cones.

In general, a subset $\mathcal C$ of $\R^N$ is a cone if $\alpha x\in \mathcal C$ for every $x\in \mathcal C$ and $\alpha>0$.

\begin{definition}[Cone spanned by $\omega$]
Given $\omega\subseteq \mathbb S^{N-1}$, a relatively open subset of $\mathbb S^{N-1}$, we define the open cone $\mathcal C(\omega)$ spanned by $\omega$ as
\begin{equation*}
\mathcal C(\omega):=\{r\theta'  : r>0, \ \theta'\in\omega\}.
\end{equation*}
\end{definition}
We observe that $\mathcal C=\mathcal C(\omega)$ with $\omega=\mathcal C\cap\mathbb S^{N-1}$.

Let  $\mathcal C\subseteq\R^N$ be  an open cone and let  $\omega=\mathcal C\cap\mathbb S^{N-1}$. Here, the case $\mathcal C=\R^N$ with $\omega=\mathbb S^{N-1}$ is included. Let us consider the minimization  problem
\begin{equation}\label{lambdaC}
\Lambda_{N,s}(\mathcal C):= \inf_{\varphi\in C^\infty_c(\mathcal C)\setminus\{0\}}\frac{\int_{\R^N} |\xi|^{2s}|\mathcal F\varphi(\xi)|^2\, d\xi}{\int_{\mathcal C}|x|^{-2s}|\varphi(x)|^2\, dx}= \inf_{\substack{
\phi\in H_{\mathcal C}\\\mathop{\rm Tr}\phi\not \equiv0}}\frac{\int_{\R^{N+1}_+} t^{1-2s}|\nabla\phi|^2\, dx \,dt}{\kappa_s\int_{\mathcal C}|x|^{-2s}|\mathop{\rm Tr}\phi|^2\, dx},
\end{equation}
where $H_{\mathcal C}=\{\phi\in\mathcal D^{1,2}(\R^{N+1}_+,t^{1-2s}) :  \mathop{\rm Tr}\phi\equiv0 \ \mathrm{in} \ \R^N\setminus \mathcal C\}$.
e observe that the equivalence of the two minimization problems above directly follows from \eqref{problemadiminimo} and \eqref{eq:debo-est}.
The infimum in \eqref{lambdaC} is the best constant in  the following Hardy-trace inequality:
\begin{equation}\label{HardyCone}
\kappa_s\Lambda_{N,s}(\mathcal C)\int_{\mathcal C}\frac{|\mathop{\rm Tr}\phi|^2}{|x|^{2s}}\, dx\leq \int_{\R^{N+1}_+} t^{1-2s}|\nabla\phi|^2\, dx \,dt\quad\text{for all }\phi\in H_{\mathcal C}.
\end{equation}
In the case $\mathcal C=\R^N$, the constant is explicitly given in \cite{Her77} and equals 
\begin{equation}\label{costantesututtoRn}
  \Lambda_{N,s}(\R^N)=2^{2s}\frac{\Gamma^2\left(\frac{N+2s}{4}\right)}{\Gamma^2\left(\frac{N-2s}{4}\right)}>0.  
\end{equation}
We observe that the best constant $\Lambda_{N,s}(\mathcal C)$ is decreasing with respect to inclusion of cones; indeed, by inclusion of spaces, for any two open cones $\mathcal C_1\subseteq \mathcal C_2\subseteq\R^N$, we have
\begin{equation}\label{eq:inclusioncones}
   \Lambda_{N,s}(\mathcal C_2)\leq \Lambda_{N,s}(\mathcal C_1). 
\end{equation}
e observe that \eqref{eq:inclusioncones} and \eqref{costantesututtoRn} imply that 
\begin{equation*}
    \Lambda_{N,s}(\mathcal C)>0\quad\text{for every open cone $\mathcal C\subseteq\R^N$.}
\end{equation*}
We will prove in Proposition \ref{strettainclu} that  the inequality in \eqref{eq:inclusioncones} is indeed strict whenever $\overline{\mathcal C_1}\setminus\{0\}\subset \mathcal C_2$. The proof of the latter fact strongly relies on an equivalent formulation of \eqref{lambdaC} as a minimization problem on the upper half sphere $\mathbb S^{N}_+$, as shown in the following lemma. We refer to \cite{Ter96} for the analogous result for classical Hardy's inequalities.

\begin{lemma}\label{lemmaminimo}
Let   $\mathcal C\subseteq\R^N$ be an open cone and $\omega=\mathcal C\cap\mathbb S^{N-1}$. Then
\begin{equation}\label{lambdaC2}
\Lambda_{N,s}(\mathcal C)= \min_{\psi\in V_\omega\setminus\{0\}}\frac{\int_{\mathbb S^N_+} \theta_{N+1}^{1-2s}(|\nabla_{\mathbb S^N}\psi|^2+\left(\frac{N-2s}{2}\right)^2\psi^2)\, dS}{\kappa_s\int_{\omega}|\mathop{\rm Tr}\psi|^2\, dS'},
\end{equation}
where $V_\omega$ is defined in \eqref{Vomega}.
\end{lemma}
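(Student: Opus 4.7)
The plan is to pass to polar coordinates $z=r\theta$ with $r=|z|$, $\theta\in\mathbb S^N_+$, and then perform a logarithmic change of variables $\tau=\log r$ together with a weight-removing substitution, so that the Rayleigh quotient on $H_{\mathcal C}$ becomes a quotient of integrals over $\mathbb R\times\mathbb S^N_+$ whose angular slices are exactly the Rayleigh quotient appearing on the right-hand side of \eqref{lambdaC2}.

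First I would compute, using $dz=r^N\,dr\,dS$, $t=r\theta_{N+1}$, and $|\nabla\phi|^2=|\partial_r\phi|^2+r^{-2}|\nabla_{\mathbb S^N}\phi|^2$, that for $\phi\in H_{\mathcal C}$
\begin{align*}
\int_{\R^{N+1}_+} t^{1-2s}|\nabla\phi|^2\,dz
&=\int_0^\infty\!\!\int_{\mathbb S^N_+}\theta_{N+1}^{1-2s}\bigl(r^{N+1-2s}|\partial_r\phi|^2+r^{N-1-2s}|\nabla_{\mathbb S^N}\phi|^2\bigr)\,dS\,dr,\\
\int_{\mathcal C}|x|^{-2s}|\mathop{\rm Tr}\phi|^2\,dx
&=\int_0^\infty\!\!\int_{\omega} r^{N-1-2s}|\mathop{\rm Tr}\phi|^2\,dS'\,dr.
\end{align*}
Then I would substitute $r=e^\tau$ and set $w(\tau,\theta):=e^{\frac{N-2s}{2}\tau}\phi(e^\tau\theta)$. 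A direct computation using integration by parts in $\tau$ (the cross term $\int_{\mathbb R}w\,\partial_\tau w\,d\tau$ vanishes by density of smooth compactly supported functions) yields the identities
\begin{align*}
\int_{\R^{N+1}_+}\! t^{1-2s}|\nabla\phi|^2\,dz
&=\int_{\mathbb R}\!\!\int_{\mathbb S^N_+}\!\theta_{N+1}^{1-2s}\!\left[(\partial_\tau w)^2+|\nabla_{\mathbb S^N}w|^2+\left(\tfrac{N-2s}{2}\right)^2 w^2\right]\!dS\,d\tau,\\
\int_{\mathcal C}|x|^{-2s}|\mathop{\rm Tr}\phi|^2\,dx
&=\int_{\mathbb R}\!\!\int_{\omega}|\mathop{\rm Tr}w(\tau,\cdot)|^2\,dS'\,d\tau.
\end{align*}
Since $\mathop{\rm Tr}\phi\equiv0$ on $\R^N\setminus\mathcal C$ implies $w(\tau,\cdot)\in V_\omega$ for a.e.\ $\tau\in\mathbb R$, denoting by $\mu_\star$ the right-hand side of \eqref{lambdaC2}, one has the sliced inequality
\[
\int_{\mathbb S^N_+}\theta_{N+1}^{1-2s}\Bigl(|\nabla_{\mathbb S^N}w(\tau,\cdot)|^2+\bigl(\tfrac{N-2s}{2}\bigr)^2 w(\tau,\cdot)^2\Bigr)dS\ge \kappa_s\mu_\star\!\int_\omega|\mathop{\rm Tr}w(\tau,\cdot)|^2\,dS'
\]
for a.e.\ $\tau$. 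Integrating in $\tau$ and dropping the nonnegative $(\partial_\tau w)^2$ contribution immediately gives $\Lambda_{N,s}(\mathcal C)\ge\mu_\star$.

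For the reverse inequality I would build a recovery sequence. Given $\psi\in V_\omega\setminus\{0\}$ realizing (or approximating) the infimum $\mu_\star$ (existence follows from Proposition~\ref{propautovalori}), pick cutoffs $\chi_R\in C^\infty_c(\mathbb R)$ with $\chi_R\equiv 1$ on $[-R+1,R-1]$, $\operatorname{supp}\chi_R\subset[-R,R]$, and $|\chi_R'|\le C$ uniformly, and set
\[
\phi_R(z):=|z|^{-\frac{N-2s}{2}}\chi_R(\log|z|)\,\psi(z/|z|),
\]
i.e.\ $w_R(\tau,\theta)=\chi_R(\tau)\psi(\theta)$ under the correspondence above. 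Since $\operatorname{supp}\phi_R$ is contained in the annulus $e^{-R}\le|z|\le e^R$ (bounded away from $0$ and $\infty$) and $\mathop{\rm Tr}\psi$ vanishes off $\omega$, one checks $\phi_R\in H_{\mathcal C}$. Plugging $w_R$ into the identities above gives
\[
\frac{\int_{\R^{N+1}_+} t^{1-2s}|\nabla\phi_R|^2\,dz}{\kappa_s\int_{\mathcal C}|x|^{-2s}|\mathop{\rm Tr}\phi_R|^2\,dx}
=\frac{\|\chi_R\|_{L^2}^2\,\mathcal Q(\psi)+\|\chi_R'\|_{L^2}^2\,\int_{\mathbb S^N_+}\theta_{N+1}^{1-2s}\psi^2\,dS}{\kappa_s\|\chi_R\|_{L^2}^2\int_\omega|\mathop{\rm Tr}\psi|^2\,dS'},
\]
where $\mathcal Q(\psi)$ is the numerator in \eqref{lambdaC2}. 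Since $\|\chi_R\|_{L^2}^2\to\infty$ while $\|\chi_R'\|_{L^2}^2$ stays bounded, the right-hand side tends to $\mu_\star$ as $R\to\infty$, giving $\Lambda_{N,s}(\mathcal C)\le\mu_\star$ and completing the proof.

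The main technical point is the integration-by-parts step that annihilates the cross term in the weight-removing substitution, together with verifying that the truncated test functions $\phi_R$ really belong to $H_{\mathcal C}$; both rely on density of $C^\infty_c(\overline{\R^{N+1}_+})$ in $\mathcal D^{1,2}(\R^{N+1}_+,t^{1-2s})$ and of smooth functions with the right trace support in $V_\omega$, so that all formal manipulations are justified by approximation.
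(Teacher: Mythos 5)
Your change of variables $w(\tau,\theta)=e^{\frac{N-2s}{2}\tau}\phi(e^\tau\theta)$ is correct and the resulting identities are verified by a direct computation; the cross term indeed vanishes because $w$ decays as $\tau\to\pm\infty$ for smooth compactly supported $\phi$ (and then by density). Your ``$\geq$'' direction (slice in $\tau$, drop $|\partial_\tau w|^2$) and ``$\leq$'' direction (recovery sequence $w_R=\chi_R\psi$) are both sound. This is a genuinely different route from the paper's proof, though morally parallel. The paper works directly in the radial variable $r$ and invokes the one-dimensional Hardy inequality $\inf_f\int r^{N+1-2s}|f'|^2\,dr/\int r^{N-1-2s}|f|^2\,dr=\left(\frac{N-2s}{2}\right)^2$ as a black box (citing Hardy--Littlewood--P\'olya); for the upper bound it tests with products $f(r)\psi(\theta)$ and uses that the infimum \eqref{HLP} is an infimum. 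Your logarithmic/Emden--Fowler substitution absorbs the weight, turning the radial Hardy inequality into the trivial observation $\int|\partial_\tau w|^2\ge 0$, and replaces the density/infimum argument on \eqref{HLP} by an explicit cut-off recovery sequence. What the paper's approach buys is brevity (Hardy is cited, not re-derived); what yours buys is self-containedness and a more transparent view of when equality is approached.

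One small correction: the lemma asserts a \emph{minimum}, so the attainment of the spherical infimum is part of the claim. You attribute existence of a minimizer to Proposition~\ref{propautovalori}, but that proposition concerns the eigenvalue problem \eqref{probautov} with $L^2(\mathbb S^N_+,\theta_{N+1}^{1-2s})$ in the denominator, not the Hardy-type quotient with $\int_\omega|\mathop{\rm Tr}\psi|^2\,dS'$ in the denominator. The attainment actually follows from the compactness of the trace $H^1(\mathbb S^N_+,\theta_{N+1}^{1-2s})\to L^2(\mathbb S^{N-1})$ together with weak closedness of $V_\omega$, as the paper notes (see Remark~\ref{rem:compact-trace}). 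Since you hedge with ``realizing (or approximating),'' the value-identity part of your argument is unaffected, but to prove the ``min'' you should cite the trace compactness rather than Proposition~\ref{propautovalori}.
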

\begin{proof}
Let us define 
\begin{equation}\label{eq:CNs}
m_{N,s}(\mathcal C):= \inf_{\psi\in V_\omega\setminus\{0\}}\frac{\int_{\mathbb S^N_+} \theta_{N+1}^{1-2s}(|\nabla_{\mathbb S^N}\psi|^2+\left(\frac{N-2s}{2}\right)^2\psi^2)\, dS}{\kappa_s\int_{\omega}|\mathop{\rm Tr}\psi|^2\, dS'}.
\end{equation}
We first prove that  $m_{N,s}(\mathcal C)$ is attained by a function $\overline\psi\in V_\omega$ (so that the above infimum is indeed  a minimum) and then that 
\begin{equation}\label{uguaglianza}
  m_{N,s}(\mathcal C)=\Lambda_{N,s}(\mathcal C).  
\end{equation}
The fact that  $m_{N,s}(\mathcal C)$ is attained follows by the compactness of the trace operator
\eqref{eq:traccia-sfera} (see Remark \ref{rem:compact-trace} below) and the weak closeness of $V_\omega$ in $H^1(\mathbb{S}^N_+, \theta_{N+1}^{1-2s})$. 
In order to prove \eqref{uguaglianza}, we first show that $\Lambda_{N,s}(\mathcal C)\leq m_{N,s}(\mathcal C)$. To this aim, we consider any function $\psi\in C^\infty(\overline{\mathbb{S}^N_+})$ such that $\mathop{\rm supp }\psi
\cap\partial\mathbb{S}^N_+\subseteq \omega$. For any 
$f\in C^\infty_c(0, +\infty)$ such that $f\not\equiv 0$, 
we define the function $v(r\theta)= f(r)\psi(\theta)$ for every $r>0$ and $\theta\in \mathbb{S}^N_+$. 
Rewriting \eqref{HardyCone} for such $v$, we have 
\begin{align}\label{dadivid}
    \kappa_s \Lambda_{N,s}(\mathcal C) &\left(\int_0^{+\infty}r^{N-1-2s}f^2(r)\, dr\right) \left(\int_\omega \psi^2(\theta',0)\, dS'\right)\\
   \notag \leq& \left(\int_0^{+\infty} r^{N+1-2s}|f'(r)|^2\, dr\right)\left(\int_{\mathbb{S}^N_+}
    \theta_{N+1}^{1-2s}\psi^2(\theta)\, dS\right)\\
    \notag &+ \left(\int_0^{+\infty}r^{N-1-2s}f^2(r)\, dr\right)\left(\int_{\mathbb{S}^N_+}
    \theta_{N+1}^{1-2s}|\nabla _{\mathbb{S}^N}\psi(\theta)|^2\, dS\right).
 \end{align}
Since 
\begin{equation}\label{HLP}
    \inf_{f\in C^\infty_c(0, +\infty)}\frac{\int_0^{+\infty} r^{N+1-2s}|f'(r)|^2\, dr}{\int_0^{+\infty}r^{N-1-2s}f^2(r)\, dr}=\left(\frac{N-2s}{2}\right)^2
\end{equation}
(see \cite{HarLitPol52}*{Theorem 330}), from \eqref{dadivid} and  a density argument it follows that 
\begin{equation*}
    \Lambda_{N,s}(\mathcal C) \leq \frac{\int_{\mathbb{S}^N_+}\theta_{N+1}^{1-2s}|\nabla _{\mathbb{S}^N}\psi(\theta)|^2\, dS+ \left(\frac{N-2s}{2}\right)^2\int_{\mathbb{S}^N_+}\theta_{N+1}^{1-2s}\psi^2(\theta)\, dS }{\kappa_s\int_\omega |\mathop{\rm Tr}\psi|^2\, dS'}
\end{equation*}
for every $\psi\in V_\omega$. The inequality  $\Lambda_{N,s}(\mathcal C)\leq m_{N,s}(\mathcal C)$ is thereby proved. 

It remains to prove the reverse inequality $\Lambda_{N,s}(\mathcal C)\geq m_{N,s}(\mathcal C)$. To this aim, we consider  any function $U\in C^\infty_c(\overline{\mathbb{R}^{N+1}_+})$ such that $\mathop{\rm supp }U\cap \partial\R^{N+1}_+\subseteq \mathcal C$. Passing to  polar coordinates, we have  
\begin{multline}\label{otherineq}
    \int_{\mathbb{R}^{N+1}_+}t^{1-2s} |\nabla U|^2\,dz\\= \int_{\mathbb{S}^N_+} \theta_{N+1}^{1-2s}\left(\int_0^{+\infty} r^{N+1-2s}|\partial _r U(r\theta)|^2\, dr + \int_0^{+\infty} r^{N-1-2s}|\nabla _{\mathbb{S}^N}U(r\theta)|^2\, dr\right) dS.
\end{multline}
Notice that, by \eqref{HLP}
\begin{multline}\label{HLP2}
  \int_{\mathbb{S}^N_+}  \theta_{N+1}^{1-2s}\left(\int_0^{+\infty}r^{N+1-2s}|\partial _r U(r\theta)|^2\, dr\right)dS\\
  \geq \left(\frac{N-2s}{2}\right)^2 \int_{\mathbb{S}^N_+}\theta_{N+1}^{1-2s}\left(\int_0^{+\infty}r^{N-1-2s}|U(r\theta)|^2\, dr\right) dS.
\end{multline}
Thus plugging \eqref{HLP2} into \eqref{otherineq}, from the definition of $m_{N,s}(\mathcal C)$ it follows that
\begin{equation*}
    \begin{split}
\int_{\mathbb{R}^{N+1}_+} t^{1-2s} |\nabla U|^2\,dz &\geq \int_0^{+\infty} r^{N-1-2s}\left(\int_{\mathbb{S}^N_+}\theta_{N+1}^{1-2s} \left(|\nabla _{\mathbb{S}^N}U(r\theta)|^2+\left(\frac{N-2s}{2}\right)^2 U^2(r\theta)\right)\, dS\right)\, dr\\
&\geq  m_{N,s}(\mathcal C) \int_0^{+\infty}r^{N-1-2s}\left(\kappa_s \int_\omega |\mathop{\rm Tr}U(r\theta')|^2\, dS'\right) dr
= m_{N,s}(\mathcal C) \kappa_s\int_{\mathcal C}\frac{|\mathrm{Tr}U|^2}{|x|^{2s}}\, dx.
    \end{split}
\end{equation*}
Notice that above we used that the function $\theta\mapsto U(r\theta)$ belongs to $C^\infty_c(\overline{\mathbb{S}^N_+})$ with
trace supported in $\omega$. A density argument and the definition of $\Lambda_{N,s}(\mathcal C)$ given in \eqref{lambdaC} lead us to $\Lambda_{N,s}(\mathcal C)\geq m_{N,s}(\mathcal C)$. 
\end{proof}

\begin{remark}\label{rem:compact-trace}
    We observe that the compactness of the trace map \eqref{eq:traccia-sfera}, which is crucial to guarantee the attainability  of the infimum \eqref{eq:CNs},  can be deduced by combining the continuity of the trace operator \eqref{eq:traccia-H1} and the compactness of the Sobolev embedding $H^s(B'_1)\hookrightarrow L^2(B'_1)$ (see \cite{DinPalVal12}*{Theorem 7.1}). More precisely, let us consider a sequence $\{\psi_n\}_n$ bounded  in $H^1(\mathbb{S}^N_+, \theta_{N+1}^{1-2s})$. Let $f\in C^\infty_c(0,+\infty)$,  $f \not \equiv 0$. Defining  $v_{n}(r\theta) = f(r) \psi_{n}(\theta)$ for every $r\in (0,1)$ and a.e. $\theta\in\mathbb{S}^N_+$, by direct computations  the sequence $\{v_n\}$ turns out to be  bounded in $H^1(B^+_1, t^{1-2s})$, so that the sequence $\{\mathop{\rm Tr}v_n\}$ is bounded in $H^s(B'_1)$ by the continuity of the operator  \eqref{eq:traccia-H1}. 
Hence, by compactness of the embedding $H^s(B'_1)\hookrightarrow L^2(B'_1)$,  there exists a subsequence $\{v_{n_k}\}$ such that  $\{\mathop{\rm Tr}v_{n_k}\}$ converges (hence being a Cauchy sequence) in  $L^2(B'_1)$. It follows that
\begin{equation*}
\int_{B_1'}|   \mathop{\rm Tr}v_{n_k}-\mathop{\rm Tr}v_{n_j}|^2\,dx=\left(\int_0^1 r^{N-1}f^2(r)\,dr\right)\left(\int_{\mathbb S^{N-1}}| \mathop{\rm Tr}\psi_{n_k}-\mathop{\rm Tr}\psi_{n_k}|^2\,dS'\right)\mathop{\longrightarrow}\limits_{k,j\to\infty}0,
\end{equation*}
hence $\{\mathop{\rm Tr}\psi_{n_k}\}$ is a Cauchy sequence in $L^2(\mathbb S^{N-1})$, thus converging in  
$L^2(\mathbb S^{N-1})$.
\end{remark}

As a  consequence of Lemma \ref{lemmaminimo}, we have 
\begin{equation}\label{fract}
\kappa_s\Lambda_{N,s}(\mathcal C)\int_{\omega}|\mathop{\rm Tr}\psi|^2\, dS'\leq \left(\frac{N-2s}{2}\right)^2 \int_{\mathbb{S}^N_+}\theta_{N+1}^{1-2s}\psi^2\, dS+ \int_{\mathbb{S}^N_+}\theta_{N+1}^{1-2s}|\nabla_{\mathbb{S}^N}\psi|^2\, dS \quad\text{for all }  \psi\in V_\omega.
\end{equation}

Another relevant consequence of Lemma \ref{lemmaminimo} is the following result.
\begin{proposition}\label{strettainclu}
Let us consider two open cones $\mathcal C_1,\mathcal C_2\subseteq\R^N$ such that $\overline{\mathcal C_1}\setminus\{0\}\subset \mathcal C_2$. Then $$\Lambda_{N,s}(\mathcal C_2)<\Lambda_{N,s}(\mathcal C_1).$$
\end{proposition}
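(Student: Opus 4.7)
The proof will go by contradiction. Assume $\Lambda_{N,s}(\mathcal C_1)=\Lambda_{N,s}(\mathcal C_2)=:\Lambda$, and set $\omega_i:=\mathcal C_i\cap\mathbb S^{N-1}$. The hypothesis $\overline{\mathcal C_1}\setminus\{0\}\subset\mathcal C_2$ entails $\overline{\omega_1}\subset\omega_2$, so that $V_{\omega_1}\subset V_{\omega_2}$ and the relatively open set $\omega_2\setminus\overline{\omega_1}$ is nonempty in $\mathbb S^{N-1}$. By Lemma \ref{lemmaminimo}, $\Lambda_{N,s}(\mathcal C_1)=m_{N,s}(\mathcal C_1)$ is attained at some $\overline\psi_1\in V_{\omega_1}\setminus\{0\}$. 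Since $\mathop{\rm Tr}\overline\psi_1$ is supported in $\omega_1\subset\omega_2$, the Rayleigh quotient defining $m_{N,s}(\mathcal C_2)$ takes the same value $\Lambda$ at $\overline\psi_1$, so $\overline\psi_1$ is a minimizer for $m_{N,s}(\mathcal C_2)$ as well.

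Next I write the Euler--Lagrange equation for the enlarged minimization problem on $V_{\omega_2}$ and test it against $\phi\in V_{\omega_2}$ with $\mathop{\rm Tr}\phi$ compactly supported in $\omega_2\setminus\overline{\omega_1}$. The boundary term on the right-hand side vanishes, because $\mathop{\rm Tr}\overline\psi_1\equiv 0$ on $\mathbb S^{N-1}\setminus\omega_1\supset\omega_2\setminus\overline{\omega_1}$. Combining with the bulk equation satisfied by $\overline\psi_1$ inside $\mathbb S^N_+$ (obtained by testing against $\phi\in C^\infty_c(\mathbb S^N_+)$), this extra family of admissible test functions forces the weighted conormal derivative
\[
\lim_{\theta_{N+1}\to 0^+}\theta_{N+1}^{1-2s}\partial_{\theta_{N+1}}\overline\psi_1=0 \quad\text{on}\ \omega_2\setminus\overline{\omega_1},
\]
in addition to the Dirichlet condition $\mathop{\rm Tr}\overline\psi_1=0$ already in force there.

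I then pass from the half-sphere to the half-space by the homogeneous lift $U(z):=|z|^{-(N-2s)/2}\overline\psi_1(z/|z|)$ on $\mathbb R^{N+1}_+\setminus\{0\}$. The choice of the exponent $-(N-2s)/2$ together with the spherical equation satisfied by $\overline\psi_1$ guarantees, by a direct computation in polar coordinates, that $\operatorname{div}(t^{1-2s}\nabla U)=0$ in $\mathbb R^{N+1}_+$, while $\mathop{\rm Tr}U$ vanishes on $\mathbb R^N\setminus\mathcal C_1$ and $\lim_{t\to 0^+}t^{1-2s}\partial_t U=0$ on the open set $\mathcal C_2\setminus\overline{\mathcal C_1}$. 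Pick now an open ball $B\subset\mathcal C_2\setminus\overline{\mathcal C_1}$ (such a ball exists because $\omega_2\setminus\overline{\omega_1}\neq\emptyset$) and some $\varepsilon>0$, and define $\hat U$ on $B\times(-\varepsilon,\varepsilon)$ by $\hat U:=U$ for $t\geq 0$ and $\hat U\equiv 0$ for $t<0$. The simultaneous vanishing of the Dirichlet and weighted conormal data on $B\times\{0\}$ yields, through the usual integration by parts, that $\hat U$ is a weak solution of $\operatorname{div}(|t|^{1-2s}\nabla\hat U)=0$ in the whole cylinder $B\times(-\varepsilon,\varepsilon)$.

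Since $\hat U$ vanishes identically on the open set $B\times(-\varepsilon,0)$ and $|t|^{1-2s}$ is an $A_2$-Muckenhoupt weight, the classical strong unique continuation principle for elliptic operators with $A_2$ weights (see \cites{GarLin86,TaoZhang2008}) forces $\hat U\equiv 0$ in $B\times(-\varepsilon,\varepsilon)$, hence $U\equiv 0$ on $B\times(0,\varepsilon)$. By the homogeneous form of $U$, $\overline\psi_1$ then vanishes on the nonempty open subset of $\mathbb S^N_+$ obtained by radially projecting $B\times(0,\varepsilon)$, and a further application of unique continuation (applied to the elliptic equation satisfied by $\overline\psi_1$ on $\mathbb S^N_+$, in a region separated from the equator where the weight is non-degenerate) yields $\overline\psi_1\equiv 0$, a contradiction. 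The main obstacle is the rigorous verification that the trivial extension $\hat U$ is a weak solution across the characteristic hyperplane $\{t=0\}$: this crucially exploits the \emph{simultaneous} vanishing of the two boundary data, which in turn is the only nontrivial consequence of the assumed equality of the two Hardy constants.
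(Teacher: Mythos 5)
Your proof is correct and follows essentially the same route as the paper: you argue by contradiction, use Lemma~\ref{lemmaminimo} to produce a minimizer $\overline\psi_1\in V_{\omega_1}\subset V_{\omega_2}$ that is simultaneously optimal for both spherical problems, deduce that both Dirichlet and (weighted) Neumann conditions vanish on $\omega_2\setminus\overline{\omega_1}$, lift to the half-space via $U(z)=|z|^{-(N-2s)/2}\overline\psi_1(z/|z|)$, extend trivially across $\{t=0\}$, and invoke unique continuation for the $A_2$-weighted operator to conclude $\overline\psi_1\equiv 0$. The only difference is that you spell out a few implicit steps (the Euler--Lagrange testing and the final propagation back to the sphere) in more detail than the paper does.
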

\begin{proof}
Let us suppose by contradiction that $\Lambda_{N,s}(\mathcal C_2)=\Lambda_{N,s}(\mathcal C_1)$. Since $\overline{\mathcal C_1}\setminus\{0\}\subset \mathcal C_2$, then $\overline{\omega_1}\subset \omega_2$, where $\omega_i=\mathcal C_i\cap \mathbb S^{N-1}$ for $i=1,2$.
Thanks to Lemma \ref{lemmaminimo},  there exists $\psi\in V_{\omega_1}\setminus\{0\}\subset 
V_{\omega_2}\setminus\{0\}$ attaining the minimum in \eqref{lambdaC2} with $\mathcal C=\mathcal C_1$, which is equal to $\Lambda_{N,s}(\mathcal C_1)=\Lambda_{N,s}(\mathcal C_2)$; hence  $\psi$ also attains the minimum in \eqref{lambdaC2} with $\mathcal C=\mathcal C_2$. Thus, $\psi$  solves, for both $i=1,2$,
\begin{equation*}
\begin{cases}
-\mathrm{div}(\theta_{N+1}^{1-2s}\nabla_{\mathbb S^N}\psi)+\left(\frac{N-2s}{2}\right)^2\theta_{N+1}^{1-2s}\psi=0 & \mathrm{in} \ \mathbb S^N_+\\
-\lim_{\theta_{N+1}\to0^+}\theta_{N+1}^{1-2s}\nabla_{\mathbb S^N}\psi\cdot \mathbf{e}=\kappa_s\Lambda_{N,s}(C_i)\psi & \mathrm{on} \ \omega_i\\
\mathop{\rm Tr}\psi=0 & \mathrm{on} \ \mathbb S^{N-1}\setminus\omega_i.
\end{cases}
\end{equation*}
In order to get a contradiction we focus on what happens in $\omega_2\setminus \overline{\omega_1}$, which is a non empty open subset of $\mathbb S^{N-1}$: here the conditions $\mathop{\rm Tr}\psi=0$ and $\lim_{\theta_{N+1}\to0^+}\theta_{N+1}^{1-2s}\nabla_{\mathbb S^N}\psi\cdot \mathbf{e}=0$ are both satisfied, and hence the contradiction follows by unique continuation. 
Indeed, the function $\Psi(z)=|z|^{-(N-2s)/2}\psi(z/|z|)$  would satisfy the equation $\mathrm{div}(t^{1-2s}\nabla \Psi)=0$ in $\widetilde{\mathcal C}\times(0,+\infty)$, where $\widetilde{\mathcal C}=\{r\theta':r>0\text{ and }\theta'\in \omega_2\setminus\overline{\omega_1}\}$, with both homogeneous Dirichlet and  Neumann conditions on $\widetilde{\mathcal C}$. Thus, the trivial extension of $\Psi$ to $\widetilde{\mathcal C}\times\R$  would solve 
    $\mathrm{div}(|t|^{1-2s}\nabla\Psi)=0$ and   violate classical unique continuation principles for elliptic operators with $A_2$ Muckenhoupt weights, see e.g.  \cites{GarLin86,TaoZhang2008}. 
\end{proof}

\begin{lemma}
For every  $r>0$ and $\phi\in H^1(B^+_r, t^{1-2s})$ such that $\mathop{\rm Tr}\phi\equiv 0$ on $B'_r\setminus \mathcal C$,  there holds 
\begin{equation}\label{fract2}
\kappa_s\Lambda_{N,s}(\mathcal C)\int_{\mathcal C\cap B'_r}\frac{|\mathop{\rm Tr}\phi|^2}{|x|^{2s}}\, dx\leq \int_{B^+_r} t^{1-2s}|\nabla \phi|^2\, dz+\frac{N-2s}{2r}\int_{\partial^+B^+_r} t^{1-2s}\phi^2\, dS,
\end{equation}
where $\partial^+B^+_r=\R^{N+1}_+\cap B_r$.
\end{lemma}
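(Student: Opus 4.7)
The strategy is to combine the spherical characterization of $\Lambda_{N,s}(\mathcal C)$ given by Lemma~\ref{lemmaminimo} with a one-dimensional weighted Hardy inequality on $(0,r)$ incorporating a boundary term at $r$. First I would reduce to smooth $\phi$ by a standard density argument (or, equivalently, work directly at the $H^1$-level by invoking Fubini in polar coordinates) and pass to spherical coordinates $z=\rho\theta$, $\rho\in(0,r)$, $\theta\in\mathbb{S}^N_+$. Writing $\phi_\rho(\theta):=\phi(\rho\theta)$, the identities $dz=\rho^N\, d\rho\, dS$ and $t^{1-2s}=\rho^{1-2s}\theta_{N+1}^{1-2s}$ split the Dirichlet energy $\int_{B^+_r} t^{1-2s}|\nabla\phi|^2\, dz$ into a radial part $\int_0^r\!\!\int_{\mathbb{S}^N_+}\rho^{N+1-2s}\theta_{N+1}^{1-2s}|\partial_\rho\phi|^2\, dS\, d\rho$ plus a tangential part $\int_0^r\!\!\int_{\mathbb{S}^N_+}\rho^{N-1-2s}\theta_{N+1}^{1-2s}|\nabla_{\mathbb{S}^N}\phi_\rho|^2\, dS\, d\rho$; the left-hand side of \eqref{fract2} becomes $\kappa_s\Lambda_{N,s}(\mathcal C)\int_0^r \rho^{N-1-2s}\int_\omega|\mathop{\rm Tr}\phi_\rho|^2\, dS'\, d\rho$, and the boundary term rewrites as $\frac{N-2s}{2}r^{N-2s}\int_{\mathbb{S}^N_+}\theta_{N+1}^{1-2s}\phi^2(r\theta)\, dS$.

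For a.e.\ $\rho\in(0,r)$, the slice $\phi_\rho$ belongs to $V_\omega$, so applying inequality \eqref{fract} (a consequence of Lemma~\ref{lemmaminimo}) to $\phi_\rho$, multiplying by $\rho^{N-1-2s}$, and integrating over $(0,r)$ bounds the left-hand side by the tangential part of the Dirichlet energy plus a radial $L^2$ remainder $\left(\frac{N-2s}{2}\right)^{\!2}\int_0^r\!\!\int_{\mathbb{S}^N_+}\rho^{N-1-2s}\theta_{N+1}^{1-2s}\phi^2(\rho\theta)\, dS\, d\rho$. It remains to absorb this remainder into the radial derivative term plus the boundary contribution at $\rho=r$.

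To this end I would prove a weighted 1D Hardy inequality with boundary term: for $\alpha:=N-2s>0$ and $f\in C^1((0,r])$ with $\rho^{\alpha/2}f(\rho)\to 0$ as $\rho\to 0^+$,
\begin{equation*}
\left(\frac{\alpha}{2}\right)^{\!2}\!\int_0^r \rho^{\alpha-1} f^2\, d\rho \le \int_0^r \rho^{\alpha+1}(f')^2\, d\rho + \frac{\alpha}{2}\,r^\alpha f^2(r).
\end{equation*}
This will follow from the integration-by-parts identity $\alpha\int_0^r \rho^{\alpha-1}f^2\, d\rho = r^\alpha f^2(r) - 2\int_0^r \rho^\alpha ff'\, d\rho$ combined with Young's inequality $2\rho^\alpha|ff'|\le \varepsilon\, \rho^{\alpha+1}(f')^2 + \varepsilon^{-1}\rho^{\alpha-1}f^2$ under the sharp choice $\varepsilon=2/\alpha$. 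Applying it slicewise with $f(\rho)=\phi(\rho\theta)$, integrating against $\theta_{N+1}^{1-2s}\, dS$ on $\mathbb{S}^N_+$, and using the identification $\frac{\alpha}{2}r^\alpha\int_{\mathbb{S}^N_+}\theta_{N+1}^{1-2s}\phi^2(r\theta)\, dS = \frac{N-2s}{2r}\int_{\partial^+B^+_r}t^{1-2s}\phi^2\, dS$ yields \eqref{fract2} after summation.

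The main obstacle is obtaining the 1D Hardy estimate with a \emph{sharp} boundary coefficient, so that it matches exactly the factor $\frac{N-2s}{2r}$ appearing in \eqref{fract2}; any suboptimal choice in Young's inequality would produce a larger constant in front of the boundary integral. A secondary technical point is justifying rigorously, for $\phi\in H^1(B^+_r,t^{1-2s})$, both the a.e.-in-$\rho$ membership $\phi_\rho\in V_\omega$ and the well-definedness of the trace on $\partial^+B^+_r$; both follow from Fubini applied to the polar-coordinate expression of the weighted energy, combined with the continuity of the trace operator \eqref{eq:traccia-H1}.
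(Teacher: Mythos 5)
Your proof is correct and follows essentially the same route as the paper: the paper's proof is a one-line reference to the argument of \cite{FalFel14}*{Lemma 2.5}, which consists precisely of passing to polar coordinates, applying the spherical inequality \eqref{fract} to the slices $\phi_\rho\in V_\omega$, and absorbing the resulting radial $L^2$ remainder via the weighted one-dimensional Hardy inequality with boundary term, which in the paper is cited as \cite{FalFel14}*{Lemma 2.4} and which you re-derive from scratch via integration by parts and Young's inequality with the sharp parameter $\varepsilon=2/\alpha$. The only difference is presentational: you supply the elementary radial Hardy lemma rather than quoting it, and your slicewise argument correctly identifies the vanishing condition $\rho^{\alpha/2}f(\rho)\to0$ needed to kill the endpoint term at $\rho=0$, which is automatic on the dense class $C^\infty_c(\overline{B^+_r}\setminus(\R^N\setminus\mathcal C))$ used by the paper.
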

\begin{proof}
For the proof it is sufficient to reason as in the proof of \cite{FalFel14}*{Lemma 2.5}: one can prove \eqref{fract2} first  for every $\phi\in C^\infty_c(\overline{B^+_r}\setminus (\R^N\setminus \mathcal C))$, exploiting \eqref{fract} and \cite{FalFel14}*{Lemma 2.4}, and then for every $\phi\in H^1(B^+_r, t^{1-2s})$ such that $\mathop{\rm Tr}\phi\equiv 0$ on $B'_r\setminus \mathcal C$  by a  density argument. 
\end{proof}

\subsection{Other inequalities}
For our purposes, it is first useful to remember the following Sobolev-type trace  inequality proved in \cite{FalFel14}*{Lemma 2.6}: there exists $\tilde S_{N,s}>0$ such that, for all $r>0$ and $V\in
  H^1(B_r^+,t^{1-2s})$,
  \begin{equation}\label{eq:lemma2.6FF}
    \bigg(\int_{B_r'}| \mathop{\rm Tr}(V)|^{2^*(s)}\,dx\bigg)^{\frac2{2^*(s)}}
\leq
\tilde S_{N,s}\bigg(
\frac{N-2s}{2r}\int_{\partial^+B^+_r}t^{1-2s}V^2dS+
\int_{B_r^+}t^{1-2s}|\nabla V|^2dz\bigg),
  \end{equation}
where $2^*(s)=\frac{2N}{N-2s}$ is the critical fractional Sobolev exponent.

The following lemma provides a local coercivity condition for the quadratic form associated with  equation \eqref{eq1EXT}.

\begin{lemma}\label{lemusefulineq}
For every $\alpha>0$, there exists $r_\alpha\in(0,1)$ such that, for any $r\in(0,r_\alpha]$, $f\in L^p(\mathcal C\cap B_1')$ such that $\|f\|_{L^p(\mathcal C\cap B_1')}\leq \alpha$ and $V\in H^1(B^+_r,t^{1-2s})$ such that $\mathop{\rm Tr}V\equiv 0$ on $B'_r\setminus \mathcal C$,
\begin{multline}\label{usefulineq}
\int_{B^+_r}t^{1-2s}|\nabla V|^2\, dz -\kappa_s\int_{\mathcal C\cap B'_r}\left( f+\frac{\lambda}{|x|^{2s}}\right) |\mathop{\rm Tr}V|^2\,dx+\frac{N-2s}{2r}\int_{\partial^+ B^+_r}t^{1-2s}V^2\, dS\\
\geq \tilde C\left(\int_{B^+_r}t^{1-2s}|\nabla V|^2\, dz + \frac{N-2s}{2r}\int_{\partial^+ B^+_r}t^{1-2s}V^2\, dS \right),
\end{multline}
where  $\tilde C=\tilde C(N,s,\lambda,\mathcal C)\in(0,1)$ is  a positive constant depending only on $N$, $s$, $\lambda$, and $\mathcal C$.
\end{lemma}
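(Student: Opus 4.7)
The plan is to dominate the two negative terms on the left-hand side of \eqref{usefulineq} separately by a fraction of the coercive quantity
\begin{equation*}
I(V,r):=\int_{B^+_r}t^{1-2s}|\nabla V|^2\, dz + \frac{N-2s}{2r}\int_{\partial^+ B^+_r}t^{1-2s}V^2\, dS,
\end{equation*}
with one contribution strictly smaller than $1$ (coming from the Hardy constant gap) and the other going to $0$ as $r\to 0^+$. Since the assertion is trivial if $\lambda\le 0$, we may assume $\lambda>0$ in what follows.

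First I would handle the Hardy-type term: applying \eqref{fract2} to $V$ (whose trace vanishes on $B'_r\setminus\mathcal C$) gives
\begin{equation*}
\kappa_s\lambda\int_{\mathcal C\cap B'_r}\frac{|\mathop{\rm Tr}V|^2}{|x|^{2s}}\,dx\le \frac{\lambda}{\Lambda_{N,s}(\mathcal C)}\,I(V,r),
\end{equation*}
and by assumption \eqref{assumptonlambda} we have $\theta:=\lambda/\Lambda_{N,s}(\mathcal C)<1$. Next I would estimate the $f$-term by combining Hölder's inequality with the Sobolev trace inequality \eqref{eq:lemma2.6FF}. Since $p>\frac{N}{2s}$, the conjugate exponent $p'$ satisfies $2p'<2^*(s)=\frac{2N}{N-2s}$, so Hölder on $\mathcal C\cap B'_r$ yields
\begin{equation*}
\int_{\mathcal C\cap B'_r}|f|\,|\mathop{\rm Tr}V|^2\,dx\le \|f\|_{L^p(\mathcal C\cap B'_1)}\,|B'_r|^{\frac{1}{p'}-\frac{2}{2^*(s)}}\Bigl(\int_{B'_r}|\mathop{\rm Tr}V|^{2^*(s)}\,dx\Bigr)^{\!2/2^*(s)},
\end{equation*}
and the last factor is controlled by $\tilde S_{N,s}\,I(V,r)$ thanks to \eqref{eq:lemma2.6FF}. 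Writing $\sigma:=N\bigl(\frac{1}{p'}-\frac{2}{2^*(s)}\bigr)>0$, the $f$-contribution is bounded by $C\alpha\, r^{\sigma}\,I(V,r)$ for some $C=C(N,s,p,\mathcal C)>0$, uniformly in $\|f\|_{L^p}\le\alpha$.

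Combining the two bounds, the left-hand side of \eqref{usefulineq} is at least $\bigl(1-\theta-C\alpha\, r^{\sigma}\bigr)\,I(V,r)$. Choosing $r_\alpha\in(0,1)$ so small that $C\alpha\, r_\alpha^{\sigma}\le \tfrac{1-\theta}{2}$, we obtain \eqref{usefulineq} with $\tilde C=\tfrac{1-\theta}{2}\in(0,1)$, which depends only on $N$, $s$, $\lambda$ and $\mathcal C$ (the dependence on $\alpha$ being absorbed into the choice of $r_\alpha$). The only delicate point is verifying the inequality $2p'<2^*(s)$, which is exactly equivalent to $p>\frac{N}{2s}$, so assumption \eqref{eq:ipo-h} is used in a sharp way; no serious obstacle is expected beyond carefully tracking the constants and invoking the density of smooth functions to justify \eqref{eq:lemma2.6FF} and \eqref{fract2} on the class of $V$ under consideration.
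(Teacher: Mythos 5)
Your proof is correct and follows essentially the same route as the paper: the Hardy term is absorbed via \eqref{fract2} with gap $1-\lambda/\Lambda_{N,s}(\mathcal C)$, the potential term is controlled by H\"older together with the trace--Sobolev bound \eqref{eq:lemma2.6FF} producing the factor $r^{(2sp-N)/p}$ (your $\sigma$ is exactly this exponent), and $r_\alpha$ is then chosen small enough; the paper's $\tilde C=\tfrac12\bigl(1-\lambda/\Lambda_{N,s}(\mathcal C)\bigr)$ coincides with your $\tfrac{1-\theta}{2}$. Your preliminary remark that the case $\lambda\le 0$ is trivial is a harmless (and in fact clarifying) addition, since the inequality \eqref{estim2}-type step is only needed for $\lambda>0$.
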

\begin{proof}
Let $r\in(0,1)$, $f\in L^p(\mathcal C\cap B_1')$,  and $V\in H^1(B^+_r, t^{1-2s})$ such that $\mathop{\rm Tr}V\equiv 0$ on $B'_r\setminus \mathcal C$.  By H\"{o}lder's inequality and \eqref{eq:lemma2.6FF}
we have
\begin{multline}\label{estim1}
\int_{\mathcal C\cap B'_r} |f||\mathop{\rm Tr} V|^2\,dx \leq V_N^{\frac{2sp-N}{Np}}   \Vert f \Vert _{L^p(\mathcal C\cap B'_1)} r^{\frac{2sp-N}{p}} \left(\int_{B'_r}|\mathop{\rm Tr}V|^{2^\ast(s)}\,dx\right)^{\frac{2}{2^\ast(s)}}\\
\leq V_N^{\frac{2sp-N}{Np}} \tilde{S}_{N,s}
\Vert f \Vert _{L^p(\mathcal C\cap B'_1)} r^{\frac{2sp-N}{p}} \left(\int_{B^+_r} t^{1-2s}|\nabla V|^2\, dz + \frac{N-2s}{2r}\int_{\partial^+B^+_r}t^{1-2s}V^2\,dS\right),
\end{multline}
where $V_N$ is the $N$-dimensional Lebesgue measure of $B_1'$. 
Moreover, the term involving the Hardy potential can be estimates using  \eqref{fract2}  as follows
\begin{equation}\label{estim2}
\kappa_s\lambda\int_{\mathcal C\cap B'_r} \frac{|\mathop{\rm Tr}V|^2}{|x|^{2s}} \, dx \leq \frac{\lambda}{\Lambda_{N,s}(\mathcal C)}  \left(\int_{B^+_r} t^{1-2s}|\nabla V|^2\, dz + \frac{N-2s}{2r}\int_{\partial^+B^+_r}t^{1-2s}V^2\,dS\right).
\end{equation}  
Thus, combining \eqref{estim1} and \eqref{estim2}, we obtain that, if $\alpha>0$ and $\|f\|_{L^p(B'_1\cap\mathcal C)}\leq \alpha$,
\begin{multline*}
\int_{B^+_r}t^{1-2s}|\nabla V|^2\, dz -\kappa_s\int_{\mathcal C\cap B'_r}\left( f+\frac{\lambda}{|x|^{2s}}\right) |\mathop{\rm Tr}V|^2\,dx+ \frac{N-2s}{2r}\int_{\partial^+B^+_r} t^{1-2s}V^2\, dS\\
\geq \left(1-\tfrac{\lambda}{\Lambda_{N,s}(\mathcal C)}- \kappa_s V_N^{\frac{2sp-N}{Np}} \tilde{S}_{N,s}\alpha \,r^{\frac{2sp-N}{p}}\right) \left(\int_{B^+_r} t^{1-2s}|\nabla V|^2\, dz + \tfrac{N-2s}{2r}\int_{\partial^+B^+_r}t^{1-2s}V^2\,dS\right).
\end{multline*}
Choosing  $r_\alpha\in(0,1)$ sufficiently small so that 
\begin{equation*}\label{mindi1mezzo}
\kappa_s V_N^{\frac{2sp-N}{Np}}  \tilde{S}_{N,s} \alpha \,r_\alpha^{\frac{2sp-N}{p}}<\frac12\left(1-\frac{\lambda}{\Lambda_{N,s}(\mathcal C)}\right),
\end{equation*} 
we then obtain that \eqref{usefulineq} is satisfied for all  $r\in(0,r_\alpha]$, $f\in L^p(\mathcal C\cap B_1')$ such that  $\|f\|_{L^p(\mathcal C\cap B_1')}\leq \alpha$ and $V\in H^1(B^+_r,t^{1-2s})$ such that $\mathop{\rm Tr}V\equiv 0$ on $B'_r\setminus\mathcal C$,  with $\tilde C=\frac12\big(1-\frac{\lambda}{\Lambda_{N,s}(\mathcal C)}\big)$.
\end{proof}

The following inequality will be crucial in the proof of a monotonicity formula in Section \ref{sezionemonotonia}. 
\begin{lemma}\label{maggioredi2ast}
Let $\alpha>0$ and $r_\alpha\in(0,1)$ be as in Lemma \ref{lemusefulineq}. Then, 
for every $r\in(0,r_\alpha]$, $f\in L^p(\mathcal C\cap B_1')$ such that $\|f\|_{L^p(\mathcal C\cap B_1')}\leq \alpha$, and $V\in H^1(B^+_r,t^{1-2s})$ such that $\mathop{\rm Tr}V\equiv 0$ on $B'_r\setminus \mathcal C$, we have
\begin{multline}\label{usefulineq2}
\int_{B^+_r}t^{1-2s}|\nabla V|^2\, dz -\kappa_s\int_{\mathcal C\cap B'_r}\left( f+\frac{\lambda}{|x|^{2s}}\right) |\mathop{\rm Tr}V|^2\,dx+\frac{N-2s}{2r}\int_{\partial^+ B^+_r}t^{1-2s}V^2\, dS\\
\geq \frac{\tilde C}{\tilde{S}_{N,s}}\left(\int_{\mathcal C\cap B'_r}|\mathop{\rm Tr}V|^{2^\ast(s)}\, dx\right)^{\frac{2}{2^\ast(s)}},
\end{multline}
being $\tilde C$ and  $\tilde{S}_{N,s}$ as in Lemma \ref{lemusefulineq} and  \eqref{eq:lemma2.6FF}, respectively.
\end{lemma}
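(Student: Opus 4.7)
The plan is to combine Lemma \ref{lemusefulineq} with the Sobolev-type trace inequality \eqref{eq:lemma2.6FF} in a single, short chain of inequalities. The required bound is a direct consequence of these two ingredients once one notices that the vanishing of $\mathop{\rm Tr}V$ outside $\mathcal C$ allows us to replace the integration domain $B'_r$ by $\mathcal C\cap B'_r$ on the right-hand side of \eqref{eq:lemma2.6FF}.

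More precisely, fix $\alpha>0$ and let $r_\alpha\in(0,1)$ be the radius provided by Lemma \ref{lemusefulineq}. Given any admissible $r\in(0,r_\alpha]$, $f\in L^p(\mathcal C\cap B_1')$ with $\|f\|_{L^p(\mathcal C\cap B_1')}\leq\alpha$, and $V\in H^1(B_r^+,t^{1-2s})$ with $\mathop{\rm Tr}V\equiv 0$ on $B'_r\setminus\mathcal C$, the first step is to apply \eqref{usefulineq}, which yields
\begin{equation*}
\int_{B^+_r}t^{1-2s}|\nabla V|^2\, dz -\kappa_s\!\int_{\mathcal C\cap B'_r}\!\!\left( f+\tfrac{\lambda}{|x|^{2s}}\right)|\mathop{\rm Tr}V|^2\,dx+\tfrac{N-2s}{2r}\!\int_{\partial^+ B^+_r}\!\!t^{1-2s}V^2\, dS \geq \tilde C\, \mathcal E_r(V),
\end{equation*}
where I have abbreviated
\begin{equation*}
\mathcal E_r(V):=\int_{B^+_r}t^{1-2s}|\nabla V|^2\, dz + \frac{N-2s}{2r}\int_{\partial^+ B^+_r}t^{1-2s}V^2\, dS.
\end{equation*}

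The second step is to invoke the Sobolev-type trace inequality \eqref{eq:lemma2.6FF}, which gives
\begin{equation*}
\bigg(\int_{B'_r}|\mathop{\rm Tr}V|^{2^*(s)}\,dx\bigg)^{\!\!\frac{2}{2^*(s)}}\leq \tilde S_{N,s}\,\mathcal E_r(V),
\end{equation*}
and hence $\mathcal E_r(V)\geq \tilde S_{N,s}^{-1}\big(\int_{B'_r}|\mathop{\rm Tr}V|^{2^*(s)}\,dx\big)^{2/2^*(s)}$. Finally, the hypothesis $\mathop{\rm Tr}V\equiv 0$ on $B'_r\setminus\mathcal C$ implies $\int_{B'_r}|\mathop{\rm Tr}V|^{2^*(s)}\,dx=\int_{\mathcal C\cap B'_r}|\mathop{\rm Tr}V|^{2^*(s)}\,dx$, and chaining the two estimates produces exactly \eqref{usefulineq2} with the constant $\tilde C/\tilde S_{N,s}$.

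There is no real obstacle here: the lemma is essentially a corollary of Lemma \ref{lemusefulineq} combined with \eqref{eq:lemma2.6FF}, and the only care needed is to notice that the boundary restriction on $\mathop{\rm Tr}V$ is precisely what allows us to pass from the $L^{2^*(s)}$-norm on the full thin ball $B'_r$ to the same norm on $\mathcal C\cap B'_r$. All the analytic work has already been done in the proofs of Lemma \ref{lemusefulineq} (coercivity, controlling the Hardy and $L^p$-potential terms via \eqref{fract2} and H\"older) and in the trace inequality of \cite{FalFel14}*{Lemma 2.6}.
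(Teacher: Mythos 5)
Your proof is correct and follows essentially the same path as the paper, which simply remarks that the lemma is an immediate consequence of Lemma~\ref{lemusefulineq} and~\eqref{eq:lemma2.6FF}; you have merely spelled out the two-step chaining and the trivial observation that the vanishing of $\mathop{\rm Tr}V$ outside $\mathcal C$ lets one restrict the $L^{2^*(s)}$-integral to $\mathcal C\cap B'_r$.
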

\begin{proof} The statement follows as an immediate consequence of Lemma \ref{lemusefulineq} and  \eqref{eq:lemma2.6FF}. 
\end{proof}

\subsection{Spherical eigenvalues}\label{spherical}
The blow-up analysis carried out in Section \ref{sezionemonotonia}
will highlight that a precise description of the asymptotic behaviour of solutions to \eqref{eq1} at boundary conical points requires a classification of the homogeneous entire solutions to
\begin{equation}\label{eqCone}
\begin{cases}
\mathop{\rm div}(t^{1-2s}\nabla \Phi)=0 &\mathrm{in \ } \R^{N+1}_+\\
-\lim_{t\to0^+}t^{1-2s}\partial_t \Phi=\frac{\kappa_s\lambda}{|x|^{2s}}\Phi&\mathrm{on \ } \mathcal C\times\{0\}\\
\Phi=0  &\mathrm{on \ } (\R^N\setminus \mathcal C)\times\{0\},
\end{cases}
\end{equation}
for a given open cone $\mathcal C$, with $\lambda<\Lambda_{N,s}(\mathcal C)$. Homogeneous solutions are of the form 
\begin{equation*}
\Phi(z)=|z|^\gamma \Phi\left(\frac{z}{|z|}\right),
\end{equation*}
for some homogeneity degree $\gamma=\gamma(s,\lambda,\mathcal C)$. Considering the spherical cap $\omega=\mathcal C\cap \mathbb S^{N-1}$, 
the latter classification problem is equivalent to the classification of some spherical Laplace-Beltrami weighted eigenvalues, namely the eigenvalues of \eqref{probautov}.
More precisely, $\Phi$ is a $\gamma$-homogeneous solution to \eqref{eqCone} if and only if the value $\mu$, related to $\gamma$ by formulas
\begin{equation*}
\mu=\gamma(N-2s+\gamma),\quad \gamma=\sqrt{\left(\frac{N-2s}{2}\right)^2+\mu}-\frac{N-2s}{2},
\end{equation*} 
is an eigenvalue of \eqref{probautov}, with $v=\Phi\big|_{\mathbb S^{N}_+}$ as an associated eigenfunction; see \cite{FalFel14}*{Lemma 2.1}.

Let us also remark that, fixed an open cone $\mathcal C$,
the eigenfunction of \eqref{probautov} 
associated to the first eigenvalue $\mu_1=\mu_1(N,s,\lambda,\mathcal C)$
is unique up to multiplicative constants, nonnegative and related to nonnegative homogeneous solutions to \eqref{eqCone}. The classification of first eigenfunctions was studied in \cite{TerTorVit18} in case of homogeneous $s$-harmonic functions on cones; that is, when $\lambda=0$. In the latter case one has
\begin{equation*}
\mu_1=\mu_1(N,s,0,\mathcal C)\in[0,2sN]\quad\text{and}\quad 
\gamma_1(N,s,0,\mathcal C)=\sqrt{\left(\tfrac{N-2s}{2}\right)^2+\mu_1}-\tfrac{N-2s}{2}\in[0,2s],
\end{equation*}
being the extremal cases verified if and only if $\mathcal C=\R^N$ or $\mathcal C=\emptyset$, respectively. 
Moreover, $\gamma_1(N,s,0,\mathcal C)$ and $\mu_1(N,s,0,\mathcal C)$ are monotone decreasing with respect to domain inclusion and monotone increasing in $s\in(0,1)$. Homogeneous $s$-harmonic functions on cones are studied also in connection with symmetric stable processes and Martin kernels, see e.g. \cites{BanBog04,BogSiuSto15,Mic06}.

\begin{proposition}\label{propautovalori}
There exist an increasing sequence $\{\mu_j\}_{j\geq 1}\subset \big(-\big(\frac{N-2s}2\big)^2, +\infty\big)$ of real eigenvalues of 
problem \eqref{probautov}(repeated according to their finite multiplicities) such that $\mu_j\to +\infty$, and an orthonormal basis $\{\psi_j\}_{j\geq 1}$ of $L^2(\mathbb{S}^N_+, \theta_{N+1}^{1-2s})$, such that $\psi_j$ is an eigenfunction of \eqref{probautov} associated with $\mu_j$ for all $j$.
\end{proposition}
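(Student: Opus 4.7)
The plan is to recast problem \eqref{probautov} variationally and then apply the spectral theorem for compact self-adjoint operators on $L^2(\mathbb{S}^N_+,\theta_{N+1}^{1-2s})$. Concretely, on the Hilbert space $V_\omega$ defined in \eqref{Vomega}, consider the symmetric bilinear form
\begin{equation*}
a(u,v):=\int_{\mathbb{S}^N_+}\theta_{N+1}^{1-2s}\nabla_{\mathbb{S}^N}u\cdot\nabla_{\mathbb{S}^N}v\,dS-\lambda\kappa_s\int_{\omega}\mathop{\rm Tr}u\,\mathop{\rm Tr}v\,dS',
\end{equation*}
so that $\mu$ is an eigenvalue of \eqref{probautov} with eigenfunction $v\in V_\omega\setminus\{0\}$ if and only if $a(v,\varphi)=\mu\,(v,\varphi)_{L^2(\mathbb{S}^N_+,\theta_{N+1}^{1-2s})}$ for every $\varphi\in V_\omega$.

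The first key step is coercivity. Using \eqref{fract}, for every $\psi\in V_\omega$ one has
\begin{equation*}
a(\psi,\psi)\geq \left(1-\tfrac{\lambda^+}{\Lambda_{N,s}(\mathcal C)}\right)\!\int_{\mathbb{S}^N_+}\theta_{N+1}^{1-2s}|\nabla_{\mathbb{S}^N}\psi|^2\,dS-\tfrac{\lambda^+}{\Lambda_{N,s}(\mathcal C)}\!\left(\tfrac{N-2s}{2}\right)^{\!2}\!\int_{\mathbb{S}^N_+}\theta_{N+1}^{1-2s}\psi^2\,dS,
\end{equation*}
where $\lambda^+=\max\{\lambda,0\}$. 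Since by assumption $\lambda<\Lambda_{N,s}(\mathcal C)$, this shows that, for some constant $K\geq 0$, the shifted form $\tilde a(u,v):=a(u,v)+K(u,v)_{L^2(\mathbb{S}^N_+,\theta_{N+1}^{1-2s})}$ is continuous and coercive on $V_\omega$, equivalent to the $H^1(\mathbb{S}^N_+,\theta_{N+1}^{1-2s})$-norm restricted to $V_\omega$.

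The second step is to introduce the resolvent operator. By Lax--Milgram, for every $f\in L^2(\mathbb{S}^N_+,\theta_{N+1}^{1-2s})$ there exists a unique $Tf\in V_\omega$ solving $\tilde a(Tf,\varphi)=(f,\varphi)_{L^2(\mathbb{S}^N_+,\theta_{N+1}^{1-2s})}$ for all $\varphi\in V_\omega$. The operator $T$ is clearly symmetric with respect to the $L^2(\theta_{N+1}^{1-2s})$ inner product and, because of the equivalence of norms, bounded from $L^2(\mathbb{S}^N_+,\theta_{N+1}^{1-2s})$ into $V_\omega\hookrightarrow H^1(\mathbb{S}^N_+,\theta_{N+1}^{1-2s})$. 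Compactness of $T$ as an operator on $L^2(\mathbb{S}^N_+,\theta_{N+1}^{1-2s})$ then follows from the compactness of the inclusion $H^1(\mathbb{S}^N_+,\theta_{N+1}^{1-2s})\hookrightarrow L^2(\mathbb{S}^N_+,\theta_{N+1}^{1-2s})$, which can be obtained by the same argument used in Remark \ref{rem:compact-trace}: extending spherical functions radially by a cutoff $f(r)$ one reduces to the compact Sobolev embedding on the weighted half-ball $B_1^+$. The spectral theorem for compact self-adjoint operators then produces a sequence $\{\nu_j\}$ of eigenvalues of $T$ accumulating only at $0$, together with an orthonormal basis $\{\psi_j\}$ of $L^2(\mathbb{S}^N_+,\theta_{N+1}^{1-2s})$ made of associated eigenfunctions. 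Setting $\mu_j=\nu_j^{-1}-K$ yields the required eigenvalues of \eqref{probautov} with $\mu_j\to+\infty$.

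It remains to establish the lower bound $\mu_j>-\big(\tfrac{N-2s}{2}\big)^{2}$. Testing the eigenvalue equation on $\psi_j$ itself and applying again the Hardy-type inequality \eqref{fract}, one gets
\begin{equation*}
\mu_j\int_{\mathbb{S}^N_+}\theta_{N+1}^{1-2s}\psi_j^2\,dS=a(\psi_j,\psi_j)\geq -\tfrac{\lambda^+}{\Lambda_{N,s}(\mathcal C)}\left(\tfrac{N-2s}{2}\right)^{\!2}\int_{\mathbb{S}^N_+}\theta_{N+1}^{1-2s}\psi_j^2\,dS,
\end{equation*}
hence $\mu_j\geq -\tfrac{\lambda^+}{\Lambda_{N,s}(\mathcal C)}\big(\tfrac{N-2s}{2}\big)^{2}>-\big(\tfrac{N-2s}{2}\big)^{2}$ thanks to the strict inequality $\lambda<\Lambda_{N,s}(\mathcal C)$. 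The main technical point I expect to require care is the compactness of the embedding $H^1(\mathbb{S}^N_+,\theta_{N+1}^{1-2s})\hookrightarrow L^2(\mathbb{S}^N_+,\theta_{N+1}^{1-2s})$ in this degenerate/singular weighted setting; once this is granted, the rest is a standard application of the spectral theorem combined with the quantitative Hardy inequality already proved in Lemma \ref{lemmaminimo}.
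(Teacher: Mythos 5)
Your proof is correct and follows the same strategy as the paper: define the bilinear form $a$ on $V_\omega$, establish weak coercivity through the spherical Hardy inequality \eqref{fract}, use the compact embedding of $V_\omega$ into $L^2(\mathbb{S}^N_+,\theta_{N+1}^{1-2s})$, and apply spectral theory for compact self-adjoint operators. The paper simply cites \cite{opic-kufner}*{Theorem 19.7} for the compact embedding and \cite{Sal08}*{Theorem 6.16} for the spectral decomposition, whereas you unpack the resolvent construction explicitly and indicate a radial-extension route to compactness (modulo the compact weighted Sobolev embedding on $B_1^+$, which is standard for $A_2$ weights); both are sound, and your replacement of $\lambda$ by $\lambda^+$ in the coercivity estimate is a slightly cleaner way to cover the case $\lambda<0$.
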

\begin{proof}
We first notice that the space $V_\omega$ defined in \eqref{Vomega} is compactly embedded in $L^2(\mathbb{S}^N_+, \theta_{N+1}^{1-2s})$ 
in view of \cite{opic-kufner}*{Theorem 19.7}.  Moreover,
the bilinear form $a\colon V_\omega\times V_\omega \to \mathbb{R}$ defined as  
\begin{equation*}
a(\varphi, \psi):= \int_{\mathbb{S}^N_+} \theta_{N+1}^{1-2s}\nabla_{\mathbb S^{N}}\varphi \cdot \nabla_{\mathbb S^{N}} \psi\, dS-\lambda \kappa_s \int_{\omega}\mathop{\rm Tr}\varphi \mathop{\rm Tr}\psi \, dS'\quad \text{for any $\varphi, \psi\in V_\omega$}
\end{equation*}
is continuous by continuity of the trace map \eqref{eq:traccia-sfera} and weakly coercive on $V_\omega$ by \eqref{fract}: indeed, for every $\psi\in V_\omega$,
\begin{align*}
a(\psi,\psi) &+ \left(\frac{N-2s}{2}\right)^2 \Vert \psi\Vert_{L^2(\mathbb{S}^N_+,\theta_{N+1}^{1-2s} )}^2\\
=&
\int_{\mathbb{S}^N_+}\theta_{N+1}^{1-2s}|\nabla_{\mathbb S^{N}} \psi|^2\, dS - \lambda \kappa_s\int_{\omega}|\mathop{\rm Tr}\psi|^2\, dS' + \left(\frac{N-2s}{2}\right)^2\int_{\mathbb{S}^N_+}\theta_{N+1}^{1-2s}|\psi|^2\, dS\\
\geq & \left(1-\frac{\lambda}{\Lambda_{N,s}(\mathcal C)}\right)\left(\int_{\mathbb{S}^N_+}\theta_{N+1}^{1-2s}|\nabla_{\mathbb S^{N}} \psi|^2\, dS+ \left(\frac{N-2s}{2}\right)^2\int_{\mathbb{S}^N_+}\theta_{N+1}^{1-2s}|\psi|^2\, dS\right). 
\end{align*}
The thesis follows by classical spectral theory, see e.g. \cite{Sal08}*{Theorem 6.16}.
\end{proof}
The first eigenvalue of the sequence $\{\mu_j\}_{j\geq 1}$ given in the previous proposition admits the following variational characterization
\begin{equation*}
\mu_1= \min_{\psi\in V_\omega\setminus \{0\}} \frac{\int_{\mathbb{S}^N_+} \theta_{N+1}^{1-2s}|\nabla_{\mathbb S^{N}}\psi|^2\, dS-\lambda \kappa_s \int_{\omega}|\mathop{\rm Tr}\psi|^2 \, dS'}{\int_{\mathbb{S}^N_+}\theta_{N+1}^{1-2s}\psi^2\, dS}.
\end{equation*}
\begin{remark}
In case $\lambda=0$ and $\mathcal{C}=\R^N$, from the blow-up analysis performed in \cite{FalFel14} and the regularity results proved in \cite{SirTerVit21a} for degenerate/singular problems arising from the Caffarelli–Silvestre extension, it easily follows that 
\begin{equation*}
\{\gamma_j(N,s,0,\R^N):j\geq 1\}={\mathbb N},\quad \{\mu_j(N,s,0,\R^N):j\geq1\}=\{k(k+N-2s):k\in{\mathbb N}\}.    
\end{equation*}
In case $\lambda=0$ and $\mathcal{C}=\R^N_+=\{x_N>0\}$, then 
\begin{equation*}
\{\gamma_j(N,s,0,\R^N_+):j\geq1\}={\mathbb N}+s,\quad 
\{\mu_j(N,s,0,\R^N_+):j\geq1\}=\{(k+s)(k+N-s):k\in{\mathbb N}\},
\end{equation*}
as pointed out in \cite{DelFelVit22}.
\end{remark}

\section{Approximation scheme and Pohozaev inequality}\label{sec:3}
In this section we develop an approximation procedure, needed to derive a Pohozaev-type inequality.

\subsection{Approximation scheme}\label{sec:approx}
Before constructing a sequence of regularized domains approximating the cone, we establish the following property of the distance function from the cone's boundary.
\begin{lemma}\label{1om}
Let $\mathcal C$ be an open cone. Then, the distance function $d(x):=\mathrm{dist}(x,\partial \mathcal C)$ is $1$-homogeneous, i.e. $d(\alpha x)=\alpha d(x)$ for all $x\in\mathcal C$ and $\alpha>0$. Moreover,
\begin{equation}\label{distance1hom}
\nabla d(x)\cdot x=d(x),\quad\text{for a.e. }x\in\mathcal C.
\end{equation}
\end{lemma}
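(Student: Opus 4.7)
The plan is to first establish that the boundary $\partial \mathcal C$ inherits the scaling invariance of $\mathcal C$, then deduce homogeneity of $d$ from this, and finally pass to the Euler identity via Rademacher's theorem.

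Since $\mathcal C$ is an open cone, both $\mathcal C$ and its complement $\R^N\setminus\mathcal C$ are invariant under the dilations $x\mapsto \alpha x$ for every $\alpha>0$. Consequently $\partial \mathcal C$ is itself a cone: if $y\in\partial\mathcal C$ and $\alpha>0$, every neighborhood of $\alpha y$ is the image under the homeomorphism $x\mapsto\alpha x$ of a neighborhood of $y$, hence meets both $\mathcal C$ and its complement, so $\alpha y\in\partial\mathcal C$. Given this, the $1$-homogeneity of $d$ follows by a change of variable in the infimum: for $\alpha>0$ and $x\in\mathcal C$,
\begin{equation*}
d(\alpha x) = \inf_{y\in\partial\mathcal C}|\alpha x - y| = \inf_{y'\in\partial\mathcal C}|\alpha x - \alpha y'| = \alpha \inf_{y'\in\partial\mathcal C}|x-y'| = \alpha\, d(x),
\end{equation*}
where we used the bijection $y=\alpha y'$ of $\partial \mathcal C$ with itself.

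For the second assertion, recall that $d$ is $1$-Lipschitz on $\R^N$ (as the infimum of the family of $1$-Lipschitz functions $y\mapsto |x-y|$), hence by Rademacher's theorem it is differentiable almost everywhere in $\mathcal C$. Let $x_0\in\mathcal C$ be a point of differentiability. The function $\alpha\mapsto d(\alpha x_0)$ is differentiable at $\alpha=1$ as a composition of differentiable maps, and its derivative at $\alpha=1$ equals $\nabla d(x_0)\cdot x_0$ by the chain rule; on the other hand, by the homogeneity relation just proved, the same function equals $\alpha\, d(x_0)$, whose derivative at $\alpha=1$ is $d(x_0)$. Equating the two yields $\nabla d(x_0)\cdot x_0 = d(x_0)$, proving \eqref{distance1hom}.

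The argument is essentially routine once one observes that $\partial\mathcal C$ itself is a cone; no step presents a real obstacle, the mild care needed being only to invoke Rademacher to justify pointwise differentiation a.e.\ of the Lipschitz function $d$.
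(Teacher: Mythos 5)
Your proof is correct and takes essentially the same route as the paper: establish $1$-homogeneity of $d$ from the cone invariance of $\partial\mathcal C$, then combine Lipschitz continuity (via Rademacher) with differentiation of the homogeneity identity $d(\alpha x)=\alpha\,d(x)$ at $\alpha=1$. The only cosmetic difference is in the first half: the paper picks a minimizer $y\in\partial\mathcal C$ attaining $d(x)$ and derives $d(\alpha x)\le\alpha d(x)$ before reversing, whereas you perform a direct change of variable $y=\alpha y'$ in the infimum, which is marginally cleaner since it never needs the infimum to be attained; both hinge on the same observation that $\partial\mathcal C$ is itself dilation-invariant.
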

\begin{proof}
If $x\in\mathcal C$, there exists $y\in \partial\mathcal C$ such that $|x-y|=d(x)$. Then, for every $\alpha>0$, $\alpha x\in\mathcal C$ and $\alpha y\in \partial\mathcal C$; therefore
\begin{equation*}
    d(\alpha x)=\min_{z\in \partial \mathcal C}|\alpha x-z|\leq|\alpha x-\alpha y|=\alpha d(x).
\end{equation*}
We conclude that $d(\alpha x)\leq \alpha d(x)$ for all $x\in\mathcal C$ and $\alpha>0$. The reverse inequality follows by renaming $\overline x=\alpha x\in \mathcal C$ and $\overline\alpha=1/\alpha>0$.

Since $d$ is a distance function, it is Lipschitz continuous, hence it is differentiable a.e. in $\mathcal C$. By direct computations, \eqref{distance1hom} holds in all $x$ where $d$ is differentiable.
\end{proof}

From now on, we will consider the cone $\mathcal C$ as defined in \eqref{ipocone}, for some $\varphi$ satisfying \eqref{cone_property}.
Let $n_0\in\mathbb N$ large enough to be appropriately fixed later (see Lemma \ref{lemstellaturasotto}). For every $n\geq n_0$,
we consider the following approximation of $\mathcal C$:
\begin{equation}\label{Cdelta}
\mathcal C_n:=
\{(x',x_N)\in\R^{N-1}\times\R:x_N<\psi_n(x')\},\quad\text{where}\quad  \psi_n(x'):=\frac1n+f_n(|x'|) \,\varphi\left(\frac{x'}{|x'|}\right),
\end{equation}
see Figure \ref{2figs}.
The function $f_n$ appearing in \eqref{Cdelta} is defined as
\begin{equation*}
    f_n:[0,+\infty)\to\R,\quad 
  f_n(t)= \int_{1/n^2}^t \zeta(n^2s)\,ds,
\end{equation*}
where $\zeta\in C^\infty([0,+\infty))$ is a fixed function satisfying
\begin{gather*}
    \zeta(t)=0\text{ if }t\leq 1,\quad 
    \zeta(t)=1\text{ if }t\geq 2,\quad 
0\leq\zeta(t)\leq1\text{ for all }t\in[0,+\infty),\\
\zeta'(t)\geq 0\text{ for all }t\in[0,+\infty),\quad\text{and }
\int_1^2\zeta(t)\,dt=\frac12.
\end{gather*}
We observe that $f_n\in C^\infty([0,+\infty))$ satisfies the following properties:
\begin{align}
  \label{eq:popfdelta1} 
  &f_n(t)= 0\quad\text{for every }0\leq t\leq \frac{1}{n^2}\quad \text{and}\quad f_n(t)=t-\frac3{2n^2}\quad \text{for every $t\geq \frac2{n^2}$},\\
  \label{fdeltaconv}
&-\frac 3{2n^2}\leq f_n(t)-t f'_n(t)\leq 0\quad\text{and}\quad 
|f_n(t)-t|\leq \frac 3{2n^2}
\quad \text{for every }t\geq 0.
\end{align}
The sequence of sets $\{\mathcal C_n\}_n$ approximate $\mathcal C$ in the sense that
\begin{equation}\label{eq:appro}
    \text{for every compact set $K\subset\R^N\setminus\overline{\mathcal C}$ there exists $n_K$ such that $K\subset\R^N\setminus\mathcal C_n$ for all $n\geq n_K$.}
\end{equation}
We observe that the function $\psi_n$ defined in \eqref{Cdelta} is of class $C^{1,1}$, hence $\mathcal C_n$ is a $C^{1,1}$-domain. In particular,
 $\mathcal C_n$ is a translation of the cone $\mathcal C$ with a smoothening of the vertex. Moreover, this transformation guarantees a starshapedness condition with respect to $0$ on $\partial \mathcal C_n$, as established in the following lemma.

\begin{lemma}\label{lemstellaturasotto}
Let $n_0=\lceil 6M\rceil$, where $M:=\max_{\mathbb S^{N-2}}|g|$ and  $\lceil \cdot\rceil$ denotes the ceil function. Then, for every $n\geq n_0$, 
\begin{itemize}
\item[\rm (i)] $x\cdot\nu(x)>0$ on $\partial \mathcal C_n$, where $\nu(x)$ stands for the outward unit normal vector to $\partial \mathcal C_n$ at $x$;
\item[\rm (ii)] $\mathcal C\subset\mathcal C_n$.
\end{itemize}
\end{lemma}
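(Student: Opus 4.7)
The plan is to reduce both (i) and (ii) to pointwise estimates for the scalar function $f_n$, exploiting that $\partial \mathcal C_n$ is a graph over $\R^{N-1}$ and that the spherical profile $g(x'/|x'|)$ is $0$-homogeneous in $x'$.

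For (i), since $\mathcal C_n=\{x_N<\psi_n(x')\}$, the outward unit normal at $x=(x',\psi_n(x'))\in\partial\mathcal C_n$ is proportional to $(-\nabla_{x'}\psi_n(x'),1)$, so the sign of $x\cdot\nu(x)$ matches the sign of
\begin{equation*}
\psi_n(x')-x'\cdot\nabla_{x'}\psi_n(x').
\end{equation*}
For $x'\neq 0$, by $0$-homogeneity of $x'\mapsto g(x'/|x'|)$ one has $\nabla_{x'}[g(x'/|x'|)]\cdot x'=0$, and $\nabla_{x'}[f_n(|x'|)]\cdot x'=|x'|f_n'(|x'|)$. Hence
\begin{equation*}
\psi_n(x')-x'\cdot\nabla_{x'}\psi_n(x')=\frac{1}{n}+\bigl(f_n(|x'|)-|x'|f_n'(|x'|)\bigr)\, g\!\left(\frac{x'}{|x'|}\right).
\end{equation*}
At $x'=0$ the identity trivially reduces to $1/n$, since $f_n\equiv 0$ on $[0,1/n^2]$ (so $\psi_n\equiv 1/n$ near the origin). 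Applying the first inequality in \eqref{fdeltaconv} and $|g|\leq M$, I get
\begin{equation*}
\psi_n(x')-x'\cdot\nabla_{x'}\psi_n(x')\geq \frac{1}{n}-\frac{3M}{2n^2}=\frac{2n-3M}{2n^2}>0
\end{equation*}
as soon as $n\geq n_0=\lceil 6M\rceil>3M/2$, which proves (i).

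For (ii), I would derive $\psi_n>\varphi$ pointwise on $\R^{N-1}$, which is plainly equivalent to $\mathcal C\subset\mathcal C_n$. Using $\varphi(x')=|x'|g(x'/|x'|)$ for $x'\neq 0$,
\begin{equation*}
\psi_n(x')-\varphi(x')=\frac{1}{n}+\bigl(f_n(|x'|)-|x'|\bigr)\, g\!\left(\frac{x'}{|x'|}\right),
\end{equation*}
and the second inequality in \eqref{fdeltaconv} together with $|g|\leq M$ yields the same lower bound $(2n-3M)/(2n^2)$, positive for $n\geq n_0$. The case $x'=0$ is immediate from $\psi_n(0)=1/n>0=\varphi(0)$.

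I do not anticipate any serious obstacle: the whole argument collapses to the two sharp estimates in \eqref{fdeltaconv} and the definition $M=\max_{\mathbb S^{N-2}}|g|$, with the radial smoothing near the vertex (where $\psi_n\equiv 1/n$) handling the potentially singular point $x'=0$ automatically. The only care needed is the homogeneity cancellation $\nabla_{x'}[g(x'/|x'|)]\cdot x'=0$, which reduces the computation of $\nabla\psi_n\cdot x'$ to a one-dimensional expression and is precisely what makes the same threshold $n\geq\lceil 6M\rceil$ suffice for both statements.
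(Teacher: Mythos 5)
Your proof is correct and follows essentially the same route as the paper: reducing (i) to the sign of $\psi_n(x')-x'\cdot\nabla_{x'}\psi_n(x')$ (which is precisely what the paper obtains by dotting $\nabla F$ with $(x',x_N)$ and substituting $x_N=\psi_n(x')$ on the boundary), using the orthogonality $\nabla_{x'}[g(x'/|x'|)]\cdot x'=0$, and then invoking \eqref{fdeltaconv} with the bound $|g|\leq M$ for both (i) and (ii). The only cosmetic differences are that the paper carries through the constant and records the quantitative bound $\geq\frac{3}{4n}$ (which it later reuses in the proof of Lemma \ref{starshape}), whereas you stop at strict positivity.
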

\begin{proof}
Let us define $F(x):=x_N-\psi_n(x')$ for every $x=(x',x_N)\in \mathbb{R}^{N-1}\times \mathbb{R}$.
By \eqref{Cdelta}, we have
\begin{equation}\label{0sulbordo}
F(x)=0 \quad \text{for every $x\in\partial \mathcal C_n$},
\end{equation}
so that $\partial \mathcal C_n$ is a level set for $F$. Therefore, at every $x\in \partial \mathcal C_n$, the outward normal vector to $\mathcal C_n$ is oriented upwards and parallel to the gradient of $F$, which is equal to
\begin{equation}\label{gradienteesplicito}
\nabla_x F(x',x_N)=\left(-f'_n(|x'|)\frac{x'}{|x'|}\varphi\left(\frac{x'}{|x'|}\right)-f_n(|x'|)\frac{1}{|x'|}\nabla _{\theta'}\varphi\left(\frac{x'}{|x'|}\right) , 1\right),
\end{equation}
in view of \eqref{Cdelta}, where $\theta'=x'/|x'|$.
By \eqref{Cdelta}, \eqref{0sulbordo} can be rewritten as
\begin{equation}\label{sostituisco}
    x_N= \frac1n+f_n(|x'|)\varphi\left(\frac{x'}{|x'|}\right) \quad \text{for every $x\in\partial \mathcal C_n$}.
\end{equation}
Using \eqref{gradienteesplicito}, \eqref{sostituisco} and the fact that $\nabla _{\theta'}\varphi$ and $x'$ are orthogonal,  we have
\begin{align}\label{stellaturasotto}
\nabla_x F(x',x_N) \cdot (x',x_N)=&\,x_N- \varphi\left(\frac{x'}{|x'|}\right) f'_n(|x'|)|x'|-f_n(|x'|) \nabla _{\theta'}\varphi\left(\frac{x'}{|x'|}\right) \cdot\frac{x'}{|x'|}\\
\notag= &\,\frac1n+ f_n(|x'|)\varphi\left(\frac{x'}{|x'|}\right)  - \varphi\left(\frac{x'}{|x'|}\right) f'_n(|x'|)|x'|\\
\notag=&\,\frac1n+ \varphi\left(\frac{x'}{|x'|}\right) \left(f_n(|x'|) -f'_n(|x'|)|x'|\right) \\
\notag\geq &\,\frac1n-\frac{3M}{2n^2} \geq\frac{3}{4n}
\end{align}
for every $n\geq n_0=\lceil 6M\rceil$,
where we have also taken advantage of \eqref{fdeltaconv}. (i) is thereby proved. 

To prove (ii) we observe that \eqref{cone_property} and \eqref{Cdelta}, together with the second estimate in \eqref{fdeltaconv}, yield
\begin{equation*}
    \psi_n(x')-\varphi(x')=\frac1n+(f_n(|x'|)-|x'|)\varphi\left(\frac{x'}{|x'|}\right)
    \geq \frac1n-\frac{3M}{2n^2}\geq \frac{3}{4n},
\end{equation*}
for every $x'\in\R^{N-1}$ and $n\geq n_0$. Since $\mathcal C$ and $\mathcal C_n$ are the subgraphs of $\varphi$ and $\psi_n$, respectively, the above estimate directly implies (ii). 
\end{proof}

\begin{figure}[ht]
\centering
\subfloat{\begin{tikzpicture}[scale=0.4]
%\draw[step=1cm,gray,very thin] (-7,-4) grid (7,8);
\fill[line width=0.pt,color=white,fill=blue!30] (-7,8) -- (7,8) -- (7,-4) -- (-7,-4) -- cycle;
\fill[line width=0.pt,fill=blue!50] (-7,6.9) -- (0,-2) -- (7,6.9) -- (7,-4) -- (-7,-4) -- cycle;
\draw [shift={(-1.,2.)},line width=2.pt]  plot[domain=-2.35:-1.57,variable=\t]({1.*1.*cos(\t r)+0.*1.*sin(\t r)},{0.*1.*cos(\t r)+1.*1.*sin(\t r)});
\draw [line width=1.pt](-7,6.9) -- (0,-2) -- (7,6.9);
\draw [line width=2.pt](1.705,1.29)--(7,8);
\draw [line width=2.pt](-1.705,1.29)--(-7,8);
\draw [line width=2.pt](-1,1)--(1,1);
\draw [shift={(1.,2.)},,line width=2.pt]  plot[domain=-1.57:-0.785,variable=\t]({1.*1.*cos(\t r)+0.*1.*sin(\t r)},{0.*1.*cos(\t r)+1.*1.*sin(\t r)});
\fill[line width=0.pt,fill=white,fill opacity=1] (1.705,1.29) -- (7,8.03) -- (-7,8.03) -- (-1.705,1.29) -- cycle;
\fill[line width=0.pt,color=white,fill=white,fill opacity=1] (-1,1) -- (1,1) -- (1,2) -- (-1,2) -- cycle;
\draw [white,shift={(-1.,2.)},fill=white,fill opacity=1,line width=0.pt]  plot[domain=-3.14:3.14,variable=\t]({1.*0.99*cos(\t r)+0.*0.99*sin(\t r)},{0.*0.99*cos(\t r)+1.*0.99*sin(\t r)});
\draw [white,shift={(1.,2.)},fill=white,fill opacity=1,line width=0.pt]  plot[domain=-3.14:3.14,variable=\t]({1.*0.99*cos(\t r)+0.*0.99*sin(\t r)},{0.*0.99*cos(\t r)+1.*0.99*sin(\t r)});
\draw [line width=0.5pt,<->](0,1.5)--(1,1.5);
\node at (0.7,2.2) {\scriptsize $\frac3{2n^2}$};
\draw [line width=0.5pt,<->](1,-2)--(1,0.9);
\node at (1.5,-1) {\scriptsize $\frac1n$};
\node[blue,opacity=0.3] at (0,0) {\Large ${\mathcal C}_n$};
\node[blue] at (4,-3) {\Large $\mathcal C$};
\end{tikzpicture}}\quad
\subfloat{\begin{tikzpicture}[scale=0.4]
%\draw[step=1cm,gray,very thin] (-7,-7) grid (7,7);
% \fill[line width=0.pt,color=white,fill=blue,fill opacity=0.5] (-7,6.9) -- (0,-2) -- (7,6.9) -- (7,-4) -- (-7,-4) -- cycle;
\draw [shift={(-1.,5.)},line width=2.pt]  plot[domain=1.57:2.355,variable=\t]({1.*1.*cos(\t r)+0.*1.*sin(\t r)},{0.*1.*cos(\t r)+1.*1.*sin(\t r)});
 \draw [line width=2.pt](1.705,5.71)--(7,-1);
 \draw [line width=2.pt](-1.705,5.71)--(-7,-1);
 \draw [line width=2.pt](-1,6)--(1,6);
\draw [shift={(1.,5.)},line width=2.pt]  plot[domain=0.785:1.57,variable=\t]({1.*1.*cos(\t r)+0.*1.*sin(\t r)},{0.*1.*cos(\t r)+1.*1.*sin(\t r)});
\fill[line width=2pt,blue!30,fill=blue!30,fill opacity=1] (1.705,5.71) -- (7,-1.02) -- (-7,-1.02) -- (-1.705,5.71) -- cycle;
\fill[line width=0.pt,color=blue!30,fill=blue!30,fill opacity=1] (-1,5) -- (1,5) -- (1,6) -- (-1,6) -- cycle;
\draw [blue!30,shift={(-1.,5.)},fill=blue!30,fill opacity=1,line width=0.pt]  plot[domain=-3.14:3.14,variable=\t]({1.*0.99*cos(\t r)+0.*0.99*sin(\t r)},{0.*0.99*cos(\t r)+1.*0.99*sin(\t r)});
\draw [blue!30,shift={(1.,5.)},fill=blue!30,fill opacity=1,line width=0.pt]  plot[domain=-3.14:3.14,variable=\t]({1.*0.99*cos(\t r)+0.*0.99*sin(\t r)},{0.*0.99*cos(\t r)+1.*0.99*sin(\t r)});
\draw [blue!30,fill=blue!30,fill opacity=1,line width=2.pt](-7,-1) -- (-7,-5.7) -- (7,-5.7) -- (7,-1);
\draw [line width=1.pt,fill=blue!50,fill opacity=1](-7,-5.8) -- (0,3) -- (7,-5.8);
\draw [line width=0.5pt,<->](0,5.6)--(1,5.6);
\node at (0.7,5) {\scriptsize $\frac3{2n^2}$};
\draw [line width=0.5pt,<->](-1,3)--(-1,6);
\node at (-1.5,4.5) {\scriptsize $\frac1n$};
\node[blue,opacity=0.3] at (2,3) {\Large ${\mathcal C}_n$};
\node[blue] at (0,-4.7) {\Large $\mathcal C$};
\end{tikzpicture}}
\caption{Sections of $\mathcal C$ (in dark blue) and of its smooth approximation $\mathcal C_n$ (in light blue).}
\label{2figs}
\end{figure}
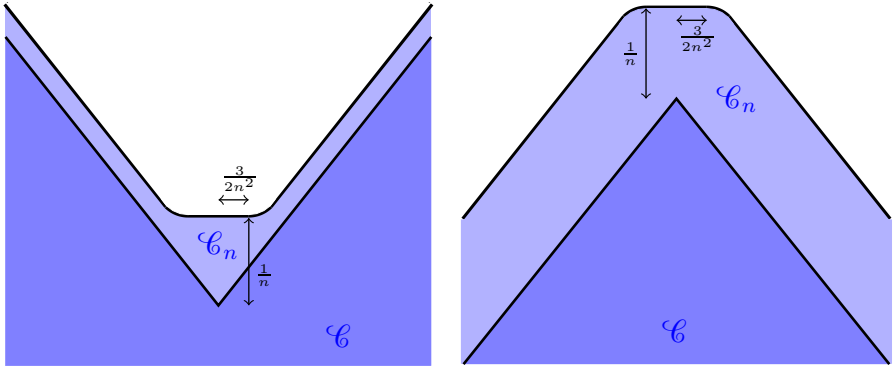
Let $h_n\in C^\infty(\overline {\mathcal C\cap B'_1})$ be such that 
\begin{equation}\label{lehnconverg}
    h_n\to h\quad \text{in $W^{1,p}(\mathcal C\cap B'_1)$.}
\end{equation}
  Let 
\begin{equation}\label{R0}
    R_0=r_{\alpha_0},
\end{equation}
being $r_{\alpha_0}$ as in Lemma \ref{lemusefulineq} with 
\begin{equation*}
    \alpha_0=\sup_n\|h_n\|_{L^{p}(\mathcal C\cap B'_1)}.
\end{equation*}
For every $n\geq n_0$, we consider the set 
\begin{equation}\label{Omegan}
\Omega_n:=\bigg\{z=(x',x_N,t)\in\R^{N+1} : \ x_N<\psi_{n}(x')+\frac n3f_n(t)\bigg\}\cap B_{R_0}^+.
\end{equation}
The topological boundary of $\Omega_n$  can be written as
\begin{equation*}
\partial \Omega_n=\overline{\sigma_n\cup\tau_n\cup\gamma_n},
\end{equation*}
where
\begin{align*}
&\sigma_n=\mathcal C_n\cap B'_{R_0}, \\
&\tau_n=\Big\{(x',x_N,t)\in\R^{N+1} : x_N<\psi_{n}(x')+\frac n3f_n(t)\Big\}\cap\partial^+B_{R_0}^+,\\
&\gamma_n=\Big\{(x',x_N,t)\in\R^{N+1} : x_N=\psi_{n}(x')+\frac n3f_n(t)\Big\}\cap B_{R_0}^+,
\end{align*}
see Figure \ref{f:Omegan}.
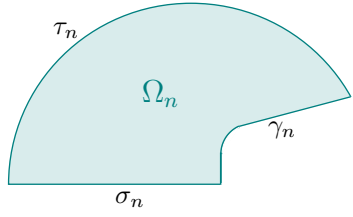
\begin{figure}[ht]
\centering
\begin{tikzpicture}[scale=0.4]
%\draw[gray,very thin] (-7,-1) grid (7,8);
\draw[teal,fill=teal!15,line width=0.5pt] (-6,0) -- (6,0) arc(0:180:6) --cycle;
\draw [white,line width=1pt](1,0)--(1,1);
 \fill[line width=0.pt,color=white,fill=white,fill opacity=1] (1,0) -- (1,1) -- (1.58,1.9) -- (5.25,2.9) -- (6.25,2.9) -- (6.25,0)-- cycle;
\draw [shift={(2,1)},teal,line width=1.pt]  plot[domain=2:3.14,variable=\t]({1.*1.*cos(\t r)+0.*1.*sin(\t r)},{0.*1.*cos(\t r)+1.*1.*sin(\t r)});
\draw [white,shift={(2,1)},fill=white,fill opacity=1,line width=0.pt]   plot[domain=2:3.14,variable=\t]({1.*0.98*cos(\t r)+0.*1.*sin(\t r)},{0.*1.*cos(\t r)+1.*0.98*sin(\t r)});
\draw [teal,line width=0.5pt](1.58,1.92)--(5.25,2.9);
\draw [teal,line width=0.7pt](0.98,0)--(0.98,1.05);
\draw [white,line width=1pt](1,0)--(6,0);
 \node at (-2,-0.5) {$\sigma_n$};
 \node at (-4.1,5.1) {$\tau_n$};
 \node at (3,1.8) {$\gamma_n$};
 \node[teal,opacity=0.8] at (-1,3) {\Large $\Omega_n$};
\end{tikzpicture}
\caption{Section of $\Omega_n$.}
\label{f:Omegan}
\end{figure}
By construction, the approximating sets $\Omega_n$ inherit from the cones $\mathcal C_n$ the property of being star-shaped with respect to the origin, as stated in the following lemma. 

\begin{lemma}\label{starshape}
Let $n_0$ be as in Lemma \ref{lemstellaturasotto}. If $n\geq n_0$, $z\in \gamma_n$ and $\nu(z)$ denotes the outward unit normal vector at $z\in \partial\Omega_n$, then $z\cdot \nu(z)> 0$. 
\end{lemma}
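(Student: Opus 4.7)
The plan is to realize $\gamma_n$ as a level set and reduce the claim to the sign of $z\cdot\nabla G$, where $G$ is the defining function. Define
\[
G(x',x_N,t):=x_N-\psi_n(x')-\tfrac{n}{3}f_n(t),
\]
so that $\gamma_n=\{G=0\}\cap B^+_{R_0}$ and $\Omega_n\subset\{G<0\}$ in a neighborhood of $\gamma_n$. Hence the outward unit normal at $z\in\gamma_n$ is $\nu(z)=\nabla G(z)/|\nabla G(z)|$, and it suffices to show $z\cdot\nabla G(z)>0$.

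Next I would compute $z\cdot\nabla G$ by mimicking the argument already carried out in \eqref{stellaturasotto}. Writing $\theta'=x'/|x'|$, one has
\[
\nabla_{x'}\psi_n(x')=f_n'(|x'|)\,\theta'\,\varphi(\theta')+\frac{f_n(|x'|)}{|x'|}\nabla_{\theta'}\varphi(\theta'),
\]
and since $\nabla_{\theta'}\varphi(\theta')\perp\theta'$, the tangential term contributes zero to $x'\cdot\nabla_{x'}\psi_n(x')$. Using this together with the equation $x_N=\psi_n(x')+\tfrac{n}{3}f_n(t)$ satisfied on $\gamma_n$, a direct calculation gives
\[
z\cdot\nabla G(z)=\frac{1}{n}+\varphi(\theta')\bigl(f_n(|x'|)-|x'|f_n'(|x'|)\bigr)+\frac{n}{3}\bigl(f_n(t)-t f_n'(t)\bigr).
\]

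Finally I would invoke \eqref{fdeltaconv}, which gives $-\tfrac{3}{2n^2}\le f_n(\tau)-\tau f_n'(\tau)\le 0$ for every $\tau\ge 0$, together with $|\varphi(\theta')|\le M$. The middle term is bounded below by $-\tfrac{3M}{2n^2}$ (the product of something in $[-M,M]$ with something in $[-\tfrac{3}{2n^2},0]$), and the last term is bounded below by $-\tfrac{1}{2n}$. Hence
\[
z\cdot\nabla G(z)\ge \frac{1}{n}-\frac{3M}{2n^2}-\frac{1}{2n}=\frac{1}{2n}-\frac{3M}{2n^2}.
\]
For $n\ge n_0=\lceil 6M\rceil$ one has $\tfrac{3M}{2n^2}\le\tfrac{1}{4n}$, so $z\cdot\nabla G(z)\ge \tfrac{1}{4n}>0$, completing the proof.

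No step is truly hard here: the whole content is bookkeeping of the two ``defect'' terms $f_n(\tau)-\tau f_n'(\tau)$, one coming from the smoothing of the cone at scale $1/n^2$ in the $x'$-variable and one from the extra factor $\tfrac{n}{3}f_n(t)$ inserted to lift the corner off $\{t=0\}$. The only delicate point is to check that the coefficient $n/3$ in the definition of $\Omega_n$ is small enough that the second defect does not destroy the gain $1/n$ obtained from the vertical shift of $\mathcal{C}_n$; this is exactly what the factor $1/3$ and the choice $n_0=\lceil 6M\rceil$ ensure.
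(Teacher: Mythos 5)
Your proof is correct and follows essentially the same route as the paper: both define $G(z)=x_N-\psi_n(x')-\tfrac{n}{3}f_n(t)$, identify $\nu=\nabla G/|\nabla G|$ on $\gamma_n$, compute $z\cdot\nabla G$ using the orthogonality $\nabla_{\theta'}\varphi\perp\theta'$ and the constraint $G=0$, and then bound the two defect terms $f_n(\tau)-\tau f_n'(\tau)$ via \eqref{fdeltaconv} together with $|\varphi|\le M$, arriving at the same final lower bound $1/(4n)$. The only cosmetic difference is that the paper first reuses the partial bound $\tfrac{1}{n}-\tfrac{3M}{2n^2}\ge\tfrac{3}{4n}$ from \eqref{stellaturasotto} before subtracting the $t$-defect, whereas you collect all three terms at once and simplify at the end.
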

\begin{proof}
For every $z=(x',x_N, t)\in \mathbb{R}^{N-1}\times \mathbb{R}\times \mathbb{R}$, we define
\begin{equation*}
    G(z):= x_N-\psi_n(x') -\frac n3f_n(t).
\end{equation*}
By the definition of $\gamma_n$,  it holds that 
\begin{equation}\label{azzeramento}
 G(z)=0 \quad \text{for every $z\in \gamma_n$}.   
\end{equation}
Thus the outward unit normal vector to $\partial\Omega_n$ at any $z=(x',x_N, t)\in\gamma_n$ is given by $$\frac{( -\nabla\psi_n(x'), 1, -\frac n3 f'_n(t))}{\sqrt{|\nabla\psi_n(x')|^2+1+\frac{n^2}9|f'_n(t)|^2}},$$ and consequently, by \eqref{azzeramento},  \eqref{Cdelta}, \eqref{stellaturasotto}, and \eqref{fdeltaconv},
\begin{equation*}
\begin{split}
\sqrt{|\nabla\psi_n(x')|^2+1+\tfrac{n^2}{9}|f'_n(t)|^2}\,(z\cdot \nu(z))&= (x',x_N,t)\cdot \Big( -\nabla\psi_n(x'),1,- \frac n3 f'_n(t)\Big) \\
&= x_N - \nabla\psi_n(x') \cdot x' -\frac n3 f'_n(t) t \\
&= \psi_n(x')- \nabla\psi_n(x') \cdot x'+\frac n3 f_n(t) -\frac n3 f'_n(t)t \\
&=\frac1n+ \varphi\left(\frac{x'}{|x'|}\right)\Big(f_{n}(|x'|)-f'_{n}(|x'|)|x'|\Big) +
\frac n3 \big(f_n(t) -f'_n(t) t\big) \\
&\geq \frac3{4n}-\frac{1}{2n}=\frac1{4n}> 0
\end{split}
\end{equation*}
for $n\geq n_0$.
\end{proof}

Once a non-trivial weak solution  $U\in H^1(B^+_1,t^{1-2s})$  to \eqref{eq1EXT} is fixed, the next step is 
 to construct a family of boundary value  problems on each $\Omega_n$ which approximate the extended problem \eqref{eq1EXT}, in the sense that the solutions $U_n$ to such approximating problems converge to $U$. To this aim,  let  $G_n\in C^\infty_c(\overline{B_1^+}\setminus (\R^N\setminus \mathcal C))$ be such that $G_n\to U$ in $H^1(B_1^+,t^{1-2s})$. It is not restrictive to assume that  $G_n$ vanishes on $\gamma_n$. Let $\eta\in C^\infty([0,+\infty))$  be  a monotone non-decreasing function such that 
 $$0\leq\eta\leq1, \quad \eta\equiv 0\ \text{in $[0,1]$},\quad \eta\equiv 1\ \text{in $[2,+\infty)$} .$$
Let
\begin{equation*}
\eta_n(x):=\eta(n d(x)), \quad x\in\mathcal C,
\end{equation*}
where  $d(x):=\mathrm{dist}(x,\partial \mathcal C)$.
Then, up to extend trivially to zero $\eta_n\big(h_n+\frac{\lambda}{|x|^{2s}}\big)$ in the whole of $\sigma_n$, we consider the following approximating boundary value problem
\begin{equation}\label{eqappro}
\begin{cases}
\mathrm{div}(t^{1-2s}\nabla U_n)=0 &\mathrm{in}\  \Omega_n\\
-\lim_{t\to0^+}t^{1-2s}\partial_tU_n=\kappa_s\eta_n \left(h_n+\frac{\lambda}{|x|^{2s}}\right) 
\mathop{\rm Tr}U _n&\mathrm{on} \ \sigma_n\\
U_n=G_n &\mathrm{on} \ \gamma_n\cup \tau_n.
\end{cases}
\end{equation}
In Lemma \ref{convdelleUn} below we prove that, for every fixed $n\in \mathbb{N}$, problem \eqref{eqappro} admits one and only one weak solution $U_n$, in the sense that 
$$
U_n \in H^1(\Omega_n,t^{1-2s}),\quad U_n= G_n \text{ on $\tau_n\cup\gamma_n$},
$$
and 
\begin{equation*}
\int_{\Omega_n} t^{1-2s}\nabla U_n \cdot \nabla \psi\,dz=\kappa_s\int_{\sigma_n}  \eta_n \left(h_n+\frac{\lambda}{|x|^{2s}}\right) \mathop{\rm Tr}U_n \mathop{\rm Tr}\psi \quad \text{for every \ } \psi\in C^\infty_c(\Omega_n\cup \sigma_n).
\end{equation*} 
The condition $U_n= G_n$ on $\tau_n\cup\gamma_n$ is understood in sense of traces; the trace on $\tau_n\cup\gamma_n$ of any $V\in H^1(\Omega_n,t^{1-2s})$ is denoted simply as $V$.

\begin{lemma}\label{convdelleUn}
For every $n\geq n_0$, problem \eqref{eqappro} admits a unique weak solution $U_n$. Moreover,   $U_n\to U$ in $H^1(B^+_{R_0},t^{1-2s})$ (\,$U_n$ being extended trivially to zero in $B^+_{R_0}\setminus \Omega_n$). 
\end{lemma}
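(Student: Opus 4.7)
The plan is to recast \eqref{eqappro} as a variational problem and apply the Lax--Milgram theorem for existence and uniqueness, and then to obtain the convergence $U_n\to U$ by combining uniform energy estimates, weak compactness, and uniqueness for the limit problem. Introducing the closed subspace
\[
\mathcal H_n := \{V \in H^1(\Omega_n,t^{1-2s}) : \mathop{\rm Tr}V=0\ \text{on}\ \tau_n\cup\gamma_n\},
\]
and seeking $U_n$ in the form $U_n=G_n+W_n$ with $W_n\in \mathcal H_n$, problem \eqref{eqappro} becomes: find $W_n\in\mathcal H_n$ such that $a_n(W_n,\psi)=-a_n(G_n,\psi)$ for every $\psi\in\mathcal H_n$, where
\[
a_n(V,W):=\int_{\Omega_n}t^{1-2s}\nabla V\cdot\nabla W\,dz-\kappa_s\int_{\sigma_n}\eta_n\Big(h_n+\tfrac{\lambda}{|x|^{2s}}\Big) VW\,dx.
\]

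To establish the coercivity of $a_n$ on $\mathcal H_n$ with a constant independent of $n$, I would extend any $V\in\mathcal H_n$ by zero to $B_{R_0}^+$, obtaining an element of $H^1(B_{R_0}^+,t^{1-2s})$ whose trace vanishes on $B_{R_0}'\setminus\mathcal C_n$. The choice $R_0=r_{\alpha_0}$ with $\alpha_0=\sup_n\|h_n\|_{L^p(\mathcal C\cap B_1')}<\infty$ (finite by \eqref{lehnconverg}) is exactly the threshold prescribed by Lemma \ref{lemusefulineq}. Although that lemma requires the trace to vanish on $B_{R_0}'\setminus\mathcal C$ rather than on $B_{R_0}'\setminus\mathcal C_n$, the discrepancy is localized on the strip $\mathcal C_n\setminus\mathcal C$ where by construction $\eta_n\equiv 0$; as a consequence, the Hardy-type term in $a_n$ only involves the values of $V$ on $\mathcal C\cap B_{R_0}'$, and the conclusion of Lemma \ref{lemusefulineq} is recovered with a coercivity constant $\tilde C$ independent of $n$. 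Continuity of $a_n$ is standard via H\"older's inequality and the trace inequality \eqref{eq:lemma2.6FF}. Lax--Milgram then yields existence and uniqueness of $W_n$, hence of $U_n$, together with a uniform bound $\|U_n\|_{H^1(\Omega_n,t^{1-2s})}\le C$.

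For the convergence, I would extend $U_n$ by zero to $B_{R_0}^+\setminus\Omega_n$ and view $\{U_n\}$ as a bounded sequence in $H^1(B_{R_0}^+,t^{1-2s})$. Up to subsequences, $U_n\rightharpoonup U^\ast$ weakly for some $U^\ast$. The convergences $h_n\to h$ in $W^{1,p}(\mathcal C\cap B_1')$, $G_n\to U$ in $H^1(B_1^+,t^{1-2s})$, and $\eta_n\to 1$ pointwise almost everywhere on $\mathcal C$, together with the compactness of the trace embedding from $H^s(B'_{R_0})$ into $L^q(B'_{R_0})$ for some $q>\frac{2p}{p-1}$ (which holds since $p>N/(2s)$) and the uniform Hardy-type bound from Lemma \ref{lemusefulineq}, allow me to pass to the limit in the weak formulation of \eqref{eqappro}. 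The boundary conditions are preserved in the limit: $\mathop{\rm Tr}U^\ast=\mathop{\rm Tr}U$ on $\partial^+B_{R_0}^+$ follows from $G_n\to U$, while $\mathop{\rm Tr}U^\ast\equiv 0$ on $B_{R_0}'\setminus\mathcal C$ follows from \eqref{eq:appro} combined with the vanishing of $\mathop{\rm Tr}U_n$ outside $\mathcal C_n$. Thus $U^\ast$ is a weak solution of \eqref{eq1EXT} restricted to $B_{R_0}^+$ with boundary value $U$ on $\partial^+B_{R_0}^+$; uniqueness of such a solution (again by Lemma \ref{lemusefulineq} applied to the limit problem) forces $U^\ast=U$, and the convergence holds along the whole sequence.

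Strong convergence finally follows from an energy identity: testing the weak formulation of \eqref{eqappro} against $U_n-G_n\in\mathcal H_n$ and passing to the limit with the same compactness tools yields $\int_{\Omega_n}t^{1-2s}|\nabla U_n|^2\,dz \to \int_{B_{R_0}^+}t^{1-2s}|\nabla U|^2\,dz$, which, combined with the already established weak convergence, upgrades to $\|U_n-U\|_{H^1(B_{R_0}^+,t^{1-2s})}\to 0$ via the parallelogram identity. The main obstacle I anticipate is the subtle coercivity step, where one must reconcile the weaker vanishing condition on $B_{R_0}'\setminus\mathcal C_n$ satisfied by test functions in $\mathcal H_n$ with the stronger condition on $B_{R_0}'\setminus\mathcal C$ required by Lemma \ref{lemusefulineq}; the cancellation provided by $\eta_n\equiv 0$ on the strip $\mathcal C_n\setminus\mathcal C$ is exactly what permits uniform coercivity.
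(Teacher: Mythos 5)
Your proposal follows essentially the same variational scheme as the paper: decompose $U_n=G_n+W_n$, reduce to a homogeneous problem on $\mathcal H_n$, invoke Lax--Milgram with uniform coercivity via Lemma~\ref{lemusefulineq}, extract a weak limit along a subsequence, and identify it using the weak formulation \eqref{extensionweak}. The one structural difference lies in the final step: the paper tests \eqref{senzanome} with $V_{n_k}$ itself, so that the coercivity bound \eqref{eQ1} together with the limit identity \eqref{eQ2} forces $\|\nabla V_{n_k}\|_{L^2(B^+_{R_0},t^{1-2s})}\to 0$ directly, and Urysohn's principle then gives convergence of the whole sequence; you instead first identify the weak limit $U^\ast$ by uniqueness of the restricted Dirichlet problem (which is indeed a direct consequence of Lemma~\ref{lemusefulineq}) and then upgrade weak to strong convergence through an energy identity. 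Both routes work and yield the same conclusion; the paper's is marginally shorter since it bypasses the separate uniqueness argument.

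You are right to flag the mismatch between the trace condition $\mathop{\rm Tr}V\equiv 0$ on $B'_{R_0}\setminus\mathcal C_n$ enjoyed by elements of $\mathcal H_n$ and the condition $\mathop{\rm Tr}V\equiv 0$ on $B'_{R_0}\setminus\mathcal C$ required in Lemma~\ref{lemusefulineq}, but the remedy you propose does not quite close it. Observing that $\eta_n\equiv 0$ on $\mathcal C_n\setminus\mathcal C$ only shows that the Hardy integrand is supported in $\mathcal C$; it does not make $V$ admissible for \eqref{fract2}, which is a statement about the \emph{test function}. The constant $\Lambda_{N,s}(\mathcal C)$ is the infimum of the Rayleigh quotient in \eqref{lambdaC} over $H_{\mathcal C}$, and for a $\phi$ whose trace lives in the strictly larger set $\mathcal C_n$, nothing in the definition guarantees
\[
\kappa_s\Lambda_{N,s}(\mathcal C)\int_{\mathcal C\cap B'_{R_0}}\frac{|\mathop{\rm Tr}\phi|^2}{|x|^{2s}}\,dx\le\int_{B^+_{R_0}}t^{1-2s}|\nabla\phi|^2\,dz+\frac{N-2s}{2R_0}\int_{\partial^+B^+_{R_0}}t^{1-2s}\phi^2\,dS,
\]
precisely because the infimum defining $\Lambda_{N,s}(\mathcal C)$ ranges over a smaller class of competitors than the one $\phi$ now belongs to. To be fair, the paper passes over the same point in silence, both in its coercivity claim and in \eqref{funzionalecont} where \eqref{fract2} is applied to $\psi\in C^\infty_c(\Omega_n\cup\sigma_n)$; so your proof is no weaker than the paper's on this count. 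Nonetheless, your sentence asserting that ``the conclusion of Lemma~\ref{lemusefulineq} is recovered'' would need an additional argument beyond the support of $\eta_n$, e.g.\ a cut-off exploiting the $1/n$-neighbourhood where $\eta_n$ vanishes, or a stability estimate for $\Lambda_{N,s}$ under small outward perturbations of the cone.
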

\begin{proof}
 To prove the lemma, we turn to study the following \emph{auxiliary} problem obtained by setting $V_n=~\!\!U_n -~\!\! G_n$: $V_n\in H^1(\Omega_n,t^{1-2s})$, $V_n=0$ 
on $\tau_n\cup\gamma_n$, and 
\begin{multline}\label{senzanome}
\int_{\Omega_n}t^{1-2s} \nabla V_n\cdot \nabla \psi\, dz- \kappa_s \int_{\sigma_n}\eta_n \left(h_n +\frac{\lambda}{|x|^{2s}}\right) \mathop{\rm Tr}V_n \mathop{\rm Tr}\psi\, dx\\
= -\int_{\Omega_n}t^{1-2s} \nabla G_n\cdot \nabla \psi\, dz+ \kappa_s \int_{\sigma_n}\eta_n \left(h_n +\frac{\lambda}{|x|^{2s}}\right)\mathop{\rm Tr} G_n \mathop{\rm Tr}\psi\, dx
\end{multline}
for every $\psi\in C^\infty_c(\Omega_n\cup \sigma_n)$.

Let  $\mathcal H_n$ be the closure of $C^\infty_c(\Omega_n\cup \sigma_n)$ in $H^1(B^+_{R_0},t^{1-2s})$. We observe that the right-hand side of \eqref{senzanome}, interpreted as a linear functional on $\mathcal H_n$, is continuous and its operator norm is bounded uniformly with respect to $n$. Indeed, since $\eta_n\leq 1$, the H\"{o}lder inequality, \eqref{assumptonlambda}, and \eqref{fract2} yield 
\begin{multline}\label{funzionalecont}
\kappa_s \biggl|\int_{\sigma_n}  \frac{\eta_n\lambda}{|x|^{2s}}\mathop{\rm Tr}G_n \mathop{\rm Tr}\psi\, dx \biggr|\leq \bigg(\!\kappa_s\Lambda_{N,s}(\mathcal C)\int_{\mathcal{C}\cap B'_{R_0}} \!\!\!\frac{|\mathop{\rm Tr}G_n|^2}{|x|^{2s}}\, dx\!\bigg)^{\!\!\frac12}\bigg(\!\kappa_s\Lambda_{N,s}(\mathcal C)\int_{\mathcal{C}\cap B'_{R_0}} \!\!\!\frac{|\mathop{\rm Tr}\psi|^2}{|x|^{2s}}\, dx\!\bigg)^{\!\!\frac12}\\
\leq\bigg(\int_{B^+_{R_0}} t^{1-2s}|\nabla G_n|^2\, dz + \frac{N-2s}{2}\int_{\partial^+B^+_{R_0}}t^{1-2s} G_n^2\, dS\bigg)^{\!\!\frac12} \bigg(\int_{B^+_{R_0}} t^{1-2s}|\nabla \psi|^2\, dz \bigg)^{\!\!\frac12}
\end{multline}
and then we can exploit the boundedness of the sequence $\{G_n\}$ in $H^1(B^+_{R_0}, t^{1-2s})$ and the continuity of the trace map from $H^1(B^+_{R_0}, t^{1-2s})$ to $L^2(\partial^+B^+_{R_0}, t^{1-2s})$ to estimate from above the right-hand side of \eqref{funzionalecont} by $\mathrm{const}\Vert \nabla \psi \Vert_{L^2(B^+_1,t^{1-2s})}$, for some $\mathop{\rm const}>0$ independent of $n$.

Next, we  notice that the left-hand side of \eqref{senzanome}, seen as a bilinear form on $\mathcal H_n$, is coercive 
since $\eta_n\leq 1$ and as a consequence of Lemma \ref{lemusefulineq}. The continuity of the bilinear form can be easily derived combining the H\"{o}lder inequality and \eqref{fract2}. 

By the Lax-Milgram theorem,  problem \eqref{senzanome} thus admits a unique solution $V_n\in \mathcal H_n$ for every fixed $n\geq n_0$; consequently, also problem \eqref{eqappro} has a unique solution $U_n$. After extending each $V_n$ to $0$ in $B^+_{R_0}\setminus \Omega_n$, the stability estimates provided by the Lax-Milgram theorem ensure that  the sequence
$\{V_n\}$ is bounded in $H^1(B^+_{R_0}, t^{1-2s})$. 
Hence there exist a subsequence $\{V_{n_k}\}$ and $V\in H^1(B^+_{R_0}, t^{1-2s})$ such that 
\begin{equation}\label{Vnkdebole}
V_{n_k}\rightharpoonup V\quad \text{weakly in $H^1(B^+_{R_0}, t^{1-2s})$ as $k\to\infty$}. 
\end{equation}
Combining \eqref{senzanome}  with a density argument
and applying Lemma \ref{lemusefulineq} to $V_{n_k}$, we get the following inequality
\begin{align}\label{eQ1}
    -\int_{B^+_{R_0}}& t^{1-2s} \nabla G_{n_k}\cdot\nabla V_{n_k} \, dz + \kappa_s \int_{\mathcal{C}\cap B'_{R_0}}\eta_{n_k} \left(h_{n_k}+\frac{\lambda}{|x|^{2s}}\right)\mathop{\rm Tr}G_{n_k} \mathop{\rm Tr}V_{n_k}\, dx\\
    \notag&= \int_{B^+_{R_0}} t^{1-2s} |\nabla V_{n_k}|^2\, dz -\kappa_s \int_{\mathcal{C}\cap B'_{R_0}} \eta_{n_k} \left( h_{n_k}+\frac{\lambda}{|x|^{2s}}\right) |\mathop{\rm Tr}V_{n_k}|^2\, dx \\
    \notag &\geq \tilde C \int_{B^+_{R_0}} t^{1-2s} |\nabla V_{n_k}|^2\, dz.  
    \end{align}
On the other hand, $\mathop{\rm Tr}G_{n}\to \mathop{\rm Tr}U$ in $L^{2^\ast(s)}(\mathcal{C}\cap B'_{R_0})$ by \eqref{eq:lemma2.6FF}, $\eta_n$ to 1 a.e. in $\mathcal{C}\cap B'_{R_0}$, $h_n\to h$ in $L^{\frac{N}{2s}}(\mathcal{C}\cap B'_{R_0})$ thanks to \eqref{lehnconverg}, and $\mathop{\rm Tr}V_{n_k}\rightharpoonup \mathop{\rm Tr}V$ in $L^{2^\ast(s)}(B'_{R_0})$ as a consequence of \eqref{Vnkdebole} and \eqref{eq:lemma2.6FF}; moreover, since the map $V\mapsto |x|^{-s}\mathop{\rm Tr}V$ is continuous from $H^1(B^+_{R_0}, t^{1-2s})$ to $L^2(\mathcal{C}\cap B'_{R_0})$ by \eqref{fract2}, we have that $|x|^{-s}\mathop{\rm Tr}G_{n}\to |x|^{-s}\mathop{\rm Tr}U$ in $L^{2}(\mathcal{C}\cap B'_{R_0})$ and 
$|x|^{-s}\mathop{\rm Tr}V_{n_k}\rightharpoonup |x|^{-s}\mathop{\rm Tr}V $ weakly in $L^{2}(\mathcal{C}\cap B'_{R_0})$. Hence
\begin{multline}\label{eQ2}
        \lim_{k\to\infty}\int_{B^+_{R_0}} t^{1-2s} \nabla G_{n_k}\cdot\nabla V_{n_k} \, dz - \kappa_s \int_{\mathcal{C}\cap B'_{R_0}}\eta_{n_k} \left(h_{n_k}+\frac{\lambda}{|x|^{2s}}\right)\mathop{\rm Tr}G_{n_k} \mathop{\rm Tr}V_{n_k}\, dx\\
        = \int_{B^+_{R_0}} t^{1-2s}\nabla U\cdot\nabla V\, dz - \kappa_s \int_{\mathcal{C}\cap B'_{R_0}}\left(h+\frac{\lambda}{|x|^{2s}}\right) \mathop{\rm Tr}U \mathop{\rm Tr}V\, dx=0,
   \end{multline}
where in the last equality we have used \eqref{extensionweak}, taking into account that the limit $V$ has null trace on both $\partial ^+B^+_{R_0}$ and $B'_{R_0}\setminus\mathcal{C}$; this is a consequence of the fact that $V_{n_k}\in\mathcal H_{n_k}$ for all $k$, $\mathop{\rm Tr}V_{n_k}\rightharpoonup \mathop{\rm Tr}V$ in $L^{2^\ast(s)}(B'_{R_0})$, and \eqref{eq:appro}.
From \eqref{eQ1} and \eqref{eQ2} it follows that $\Vert \nabla V_{n_k}\Vert_{L^2(B^+_{R_0}, t^{1-2s})}\to 0$ as $k\to\infty$, which in turn combined with 
\cite{FalFel14}*{Lemma 2.4} and \eqref{Vnkdebole} allows us to conclude that $V_{n_k}\to 0$ strongly in $H^1(B^+_{R_0}, t^{1-2s})$ as $k\to\infty$. Since 
the limit actually does not depend on the subsequence, by the Urysohn's subsequence principle, we have that $V_n\to 0$ in $H^1(B^+_{R_0}, t^{1-2s})$ as $n\to\infty$. Therefore $U_n= V_n+G_n \to U$ in $H^1(B^+_{R_0}, t^{1-2s})$ as $n\to\infty$. The proof is thereby complete. 
\end{proof}
\subsection{A Pohozaev-type inequality}
In order to provide a Pohozaev-type inequality for solutions to \eqref{eq1EXT}, we fix $r\in (0,R_0)$ and $n>\frac1r$.
Let $0<\delta<\rho<\frac{1}{n^2}$. In this way
\begin{equation*}
    B_\rho^+\subset\Omega_n\cap B_r^+.
\end{equation*}
In order to proceed with the approximation argument, we consider the following domains
\begin{equation}\label{Omeganrdelta}
\Omega_{r,n,\rho,\delta} :=\Omega_n\cap 
\{(x,t)\in B_r\setminus \overline{B_\rho}:t>\delta\},
\end{equation}
where $\Omega_n$ is defined in \eqref{Omegan}. Letting
\begin{equation*}
\begin{split}
\sigma_{r,n,\rho,\delta}&:= 
\Omega_n\cap 
\{(x,t)\in B_r\setminus \overline{B_\rho}:t=\delta\},\\
\tau_{\rho,n,\delta}&:= \Omega_n\cap \{(x,t)\in \partial B_\rho:t>\delta\},\\
\tau_{r,n,\delta}&:= \Omega_n\cap \{(x,t)\in \partial B_r:t>\delta\},\\
\gamma_{r,n,\delta}&:= \partial\Omega_n\cap \{(x,t)\in B_r:t>\delta\},\\
\end{split}
\end{equation*}
we thus have 
\begin{equation*}
\partial\Omega_{r,n,\rho,\delta}= \overline{\sigma_{r,n,\rho,\delta}\cup \tau_{r,n,\delta}\cup \tau_{\rho,n,\delta}\cup \gamma_{r,n,\delta}},
\end{equation*}
see Figure \ref{f:Omeganedelta}.
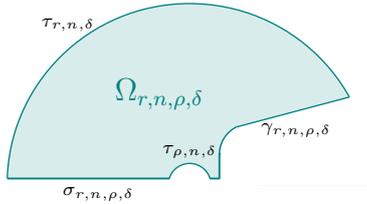
\begin{figure}[ht]
\centering
\begin{tikzpicture}[scale=0.4]
%\draw[gray,very thin] (-7,-1) grid (7,8);
\draw[teal,fill=teal!15,line width=0.5pt] (-6,0) -- (6,0) arc(0:180:6) --cycle;
\draw[teal,fill=white,fill opacity=1,line width=0.5pt] (-0.7,0) -- (0.7,0) arc(0:180:0.7) --cycle;
\draw [white,line width=1pt](1,0)--(1,1);
 \fill[line width=1.pt,color=white,fill=white,fill opacity=1] (1,0) -- (1,1) -- (1.58,1.9) -- (5.25,2.9) -- (6.25,2.9) -- (6.25,0)-- cycle;
\draw [shift={(2,1)},teal,line width=1.pt]  plot[domain=2:3.14,variable=\t]({1.*1.*cos(\t r)+0.*1.*sin(\t r)},{0.*1.*cos(\t r)+1.*1.*sin(\t r)});
\draw [white,shift={(2,1)},fill=white,fill opacity=1,line width=0.pt]   plot[domain=2:3.14,variable=\t]({1.*0.98*cos(\t r)+0.*1.*sin(\t r)},{0.*1.*cos(\t r)+1.*0.98*sin(\t r)});
\draw [teal,line width=0.5pt](1.58,1.92)--(5.25,2.9);
\draw [teal,line width=0.7pt](0.98,0)--(0.98,1.05);
\draw [white,line width=1pt](1,0)--(6,0);
 \fill[line width=1.pt,color=white,fill=white,fill opacity=1] (-6.2,-0.2) -- (2.2,-0.2) -- (2.2,0.2) -- (-6.2,0.2)-- cycle; 
\draw [teal,line width=0.7pt](-6,0.2)--(-0.65,0.2);
\draw [teal,line width=0.7pt](0.65,0.2)--(1,0.2);
 \node at (-3,-0.3) {\scriptsize$\sigma_{r,n,\rho,\delta}$};
 \node at (-4,5.3) {\scriptsize$\tau_{r,n,\delta}$};
 \node at (3.5,1.8) {\scriptsize$\gamma_{r,n,\rho,\delta}$};
 \node at (0,1.1) {\scriptsize$\tau_{\rho,n,\delta}$};
 \node[teal,opacity=0.8] at (-1,3) {\Large $\Omega_{r,n,\rho,\delta}$};
\end{tikzpicture}
\caption{Section of $\Omega_{r,n,\rho,\delta}$.}
\label{f:Omeganedelta}
\end{figure}

\begin{proposition}[Pohozaev inequality]\label{lapoho}
Let $U\in H^1(B^+_1,t^{1-2s})$ be a weak solution to \eqref{eq1EXT}. Then, for a.e. $r\in (0,R_0)$,
\begin{align}\label{poho}
\frac{r}{2}\bigg(\int_{\partial ^+ B^+_r}& t^{1-2s} |\nabla U|^2\,dS - \kappa_s\lambda\int_{\mathcal{C}\cap\partial B'_r} \frac{\mathop{\rm Tr}U^2}{|x|^{2s}}\,dS'\bigg)- r \int_{\partial ^+ B^+_r} t^{1-2s} \left|\frac{\partial U}{\partial\nu}\right|^2\,dS\\
\notag&\quad+\frac{\kappa_s}{2} \int_{\mathcal{C}\cap B'_r} (\nabla h \cdot x+ N h )|\mathop{\rm Tr}U|^2\,dx - \frac{r\kappa_s}{2} \int_{\mathcal{C}\cap \partial B'_r} h |\mathop{\rm Tr}U|^2\, dS'\\
\notag&\geq \, \frac{N-2s}{2}\bigg(\int_{B^+_r} t^{1-2s}|\nabla U|^2\, dz-\kappa_s\lambda\int_{\mathcal{C}\cap B'_r}\frac{|\mathop{\rm Tr}U|^2}{|x|^{2s}}\,dx\bigg),
\end{align}
and 
\begin{equation}\label{poho2}
\int_{B^+_r}t^{1-2s}|\nabla U|^2\, dz - \kappa_s \int_{\mathcal{C}\cap B'_r} \left(h+\frac{\lambda}{|x|^{2s}} \right)|\mathop{\rm Tr}U|^2\, dx= \int_{\partial^+B^+_r} t^{1-2s} U\frac{\partial U}{\partial\nu}\,dS
\end{equation}
where $\nu=z/|z|$ is the outward unit normal vector to $\partial^+B^+_r$.
\end{proposition}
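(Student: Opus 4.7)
The plan is to deduce \eqref{poho} by applying a classical Pohozaev-type computation to the regularized solutions $U_n$ of \eqref{eqappro}, performed on the doubly-truncated domains $\Omega_{r,n,\rho,\delta}$ from \eqref{Omeganrdelta}, and then to pass to the limit in the order $\delta\to 0^+$, $\rho\to 0^+$, $n\to\infty$. The $\delta$-truncation lifts the degeneracy of the weight $t^{1-2s}$, the $\rho$-truncation avoids the Hardy singularity at the vertex, and on the resulting smooth subset $U_n$ is regular enough, by the $C^{1,1}$-regularity of $\Omega_n$ and interior elliptic regularity for $\mathrm{div}(t^{1-2s}\nabla U_n)=0$ away from $\{t=0\}$, to justify the needed integrations by parts. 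Identity \eqref{poho2} will be obtained separately, as a direct consequence of the weak formulation \eqref{extensionweak} tested (after a cutoff/density argument) with $U$ itself on $B^+_r$, combined with the divergence theorem applied to $\mathrm{div}(t^{1-2s}U\nabla U)$.

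The heart of the computation is to multiply $\mathrm{div}(t^{1-2s}\nabla U_n)=0$ by $z\cdot\nabla U_n$, integrate over $\Omega_{r,n,\rho,\delta}$, and use the identity $\mathrm{div}(t^{1-2s}z) = (N+2-2s)t^{1-2s}$ to arrive at
\begin{equation*}
\tfrac{N-2s}{2}\int_{\Omega_{r,n,\rho,\delta}} t^{1-2s}|\nabla U_n|^2\,dz = \int_{\partial\Omega_{r,n,\rho,\delta}} t^{1-2s}\Big[\tfrac{z\cdot\nu}{2}|\nabla U_n|^2 - (\nabla U_n\cdot\nu)(z\cdot\nabla U_n)\Big]\,dS.
\end{equation*}
The boundary integral splits over the four pieces $\tau_{r,n,\delta}$, $\tau_{\rho,n,\delta}$, $\sigma_{r,n,\rho,\delta}$, $\gamma_{r,n,\delta}$. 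On $\gamma_{r,n,\delta}$, the assumption that $G_n$ vanishes on $\gamma_n$ forces $\nabla U_n = (\partial_\nu U_n)\nu$, so the integrand reduces to $-\tfrac12(z\cdot\nu)(\partial_\nu U_n)^2$; by the star-shapedness $z\cdot\nu>0$ from Lemma \ref{starshape}, this contribution is nonpositive and is discarded, producing the inequality direction in \eqref{poho}. The outer and inner spherical caps with $\nu=\pm z/|z|$ yield the standard Pohozaev boundary combination $r(\partial_\nu U_n)^2 - \tfrac{r}{2}|\nabla U_n|^2$ (and its $-\rho$ analogue). Sending $\delta\to 0^+$ on $\sigma_{r,n,\rho,\delta}$ and using the Neumann condition $-\delta^{1-2s}\partial_t U_n \to \kappa_s\eta_n(h_n+\lambda/|x|^{2s})\mathop{\rm Tr} U_n$, followed by an integration by parts in the $x$-variable that invokes $\mathrm{div}(x/|x|^{2s}) = (N-2s)/|x|^{2s}$ and $\mathrm{div}(h_n x) = \nabla h_n\cdot x + Nh_n$, transforms the bottom contribution into the terms involving $\nabla h_n\cdot x + Nh_n$, the Hardy coefficient $(N-2s)\kappa_s\lambda/|x|^{2s}$, and the boundary integral $-\tfrac{r\kappa_s}{2}\int_{\mathcal C_n\cap\partial B'_r}\eta_n(h_n+\lambda/|x|^{2s})|\mathop{\rm Tr} U_n|^2$. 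The spurious cutoff-derivative term $(h_n+\lambda/|x|^{2s})\nabla\eta_n\cdot x$ is, by the 1-homogeneity of $d$ (Lemma \ref{1om}), equal to $(h_n+\lambda/|x|^{2s})\,n\eta'(nd(x))\,d(x)$, which is uniformly bounded and supported in the shrinking strip $\{1/n\leq d(x)\leq 2/n\}$, hence vanishes as $n\to\infty$ by dominated convergence.

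The remaining limits are handled using ingredients already assembled in the paper. The contribution from $\tau_{\rho,n,\delta}$ disappears as $\rho\to 0^+$ by the integrability of $t^{1-2s}|\nabla U_n|^2$ near the origin, uniform in $n$ thanks to Lemma \ref{lemusefulineq}; the limit $n\to\infty$ rests on the $H^1(B^+_{R_0},t^{1-2s})$-convergence $U_n\to U$ from Lemma \ref{convdelleUn}, the $W^{1,p}$-convergence $h_n\to h$ from \eqref{lehnconverg}, and the Hardy inequality \eqref{fract2} to control the singular potential and transfer convergence to the relevant trace integrals. The principal obstacle is the convergence of the boundary integrals over $\partial^+B^+_r$: since $U_n$ is known to converge to $U$ only strongly in $H^1(B^+_{R_0},t^{1-2s})$, convergence of the traces of $|\nabla U_n|^2$ and $(\partial_\nu U_n)^2$ on spheres of radius $r$ holds by Fubini only for a.e.\ $r\in(0,R_0)$, which is precisely the qualifier appearing in the statement; along such admissible radii both \eqref{poho} and, via the weak formulation \eqref{extensionweak} applied with $U$ cut off at level $r$, the simpler identity \eqref{poho2} then follow.
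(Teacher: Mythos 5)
Your proposal reproduces the paper's strategy: a Rellich--Ne\v{c}as identity integrated over the regularized truncated domains $\Omega_{r,n,\rho,\delta}$, the inequality direction supplied by discarding the sign-definite $\gamma_n$-contribution via the star-shapedness of Lemma~\ref{starshape}, successive limits $\delta\to 0^+$, $\rho\to 0^+$, $n\to\infty$, the divergence computation on the thin slice transforming the Neumann data into the $\nabla h\cdot x+Nh$ and Hardy terms, the cutoff-derivative term killed through Lemma~\ref{1om} and dominated convergence, and the a.e.-$r$ qualifier coming from coarea along the $n\to\infty$ passage. The accounting of all boundary pieces and the algebra of the integrand on $\gamma_{r,n,\delta}$ are correct.

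One place where the justification is genuinely too thin: to pass $\delta\to 0^+$ on the horizontal slice $\sigma_{r,n,\rho,\delta}$ you need the pointwise boundary convergence $-\delta^{1-2s}\partial_t U_n\to\kappa_s\eta_n(h_n+\lambda|x|^{-2s})\mathop{\rm Tr}U_n$ and the vanishing of the $\delta^{2-2s}|\nabla U_n|^2$ and $\delta^{2-2s}|\partial_tU_n|^2$ terms, which demand continuity of $t^{1-2s}\partial_t U_n$ and $\nabla_x U_n$ up to $\{t=0\}$ and in particular up to the edge $\overline{\gamma_n}\cap\partial\R^{N+1}_+$. Your appeal to ``$C^{1,1}$-regularity of $\Omega_n$ and interior elliptic regularity away from $\{t=0\}$'' does not cover this: interior regularity is inapplicable exactly at the thin boundary, and $H^2$ up to the lateral boundary is a different matter from pointwise control of the degenerate conormal derivative at the degenerate hyperplane. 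The paper instead invokes \cite{FalFel14}*{Lemma 3.3} for the weighted boundary regularity away from $\gamma_n$ and, crucially, \cite{DelFelVit22}*{Lemma A.1} for the edge $\overline{\gamma_n}\cap\partial\R^{N+1}_+$; the applicability of the latter hinges on structural features of the construction that your sketch does not record, namely that $\eta_n$ vanishes in a neighborhood of $\partial\mathcal C_n$ and that the initial vertical section of $\gamma_n$ is flat, which is guaranteed by the explicit form of $f_n$ in \eqref{eq:popfdelta1}. Without these ingredients, the interchange of the $\delta\to 0^+$ limit with the $\sigma$-boundary integral is not justified.

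Minor point: the $\rho\to 0^+$ step should be along a sequence $\rho_j\to 0^+$ chosen so that $\rho_j\int_{\Omega_n\cap\partial B_{\rho_j}}t^{1-2s}|\nabla U_n|^2\,dS\to 0$ (mere $L^1$-integrability of $t^{1-2s}|\nabla U_n|^2$ gives only a subsequence, not a full limit); this is harmless because the post-$\rho$ inequality no longer depends on $\rho$, but it should be stated. Your alternative derivation of \eqref{poho2} by directly testing \eqref{extensionweak} with a cutoff of $U$ is a legitimate, slightly more direct variant of the paper's route of testing \eqref{eqappro} with $U_n$ and passing to the limit.
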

\begin{proof}
The idea of the proof is to derive a Pohozaev-type inequality for 
problem \eqref{eqappro} and consequently for problem \eqref{eq1EXT}, passing to the limit as $n\to \infty$ and taking advantage of Lemma \ref{convdelleUn}. 
For this, we 
start by observing that, for every fixed 
$r\in (0,R_0)$, $n>\frac1r$, and $0<\delta<\rho<\frac{1}{n^2}$,
 the solution $U_n$ to problem \eqref{eqappro}  belongs to $H^2(\Omega_{r,n,\rho,\delta})$  by classical elliptic regularity theory, see \cite{grisvard}*{Theorem 2.2.2.3}, since 
the equation holds true in a $C^{1,1}$-domain containing the set $\Omega_{r,n,\rho,\delta}$.
Therefore we are in the position to integrate over $\Omega_{r,n,\rho,\delta}$ the following Rellich-Ne\u{c}as identity
\begin{align*}
\mathop{\rm div}\big(t^{1-2s}|\nabla U_n| ^2 z-2t^{1-2s} (z\cdot \nabla U_n)\nabla U_n\big)
&=(N-2s)t^{1-2s}|\nabla U_n| ^2 - 2 (z\cdot \nabla U_n) \mathop{\rm div} (t^{1-2s}\nabla U_n)\\
&=(N-2s)t^{1-2s}|\nabla U_n| ^2 
\end{align*}
which holds true in a classical sense in $\Omega_n\cap B_r$.
Since $t^{1-2s}|\nabla U_n| ^2 z-2t^{1-2s} (z\cdot \nabla U_n)\nabla U_n\in W^{1,1}(\Omega_{r,n,\rho,\delta})$, we can apply  the divergence theorem on the Lipschitz domain $\Omega_{r,n,\rho,\delta}$  defined in \eqref{Omeganrdelta}; thus
\begin{align}\label{relnecint}
-&\int_{\sigma_{r,n,\rho,\delta}}\delta^{2-2s} |\nabla U_n|^2\, dx + 2\int_{\sigma_{r,n,\rho,\delta}} \delta^{1-2s} (x\cdot \nabla_x U_n + \delta \partial_t U_n)\partial_t U_n\,dx \\
\notag& \quad+r \int_{\tau_{r,n,\delta}} t^{1-2s} |\nabla U_n|^2\,dS - 2r\int_{\tau_{r,n,\delta}}t^{1-2s}\left|\frac{\partial U_n}{\partial\nu}\right|^2\, dS-\int_{\gamma_{r,n,\delta}}t^{1-2s}\left|\frac{\partial U_n}{\partial\nu}\right|^2(z\cdot\nu)\, dS\\
\notag&\quad -\rho \int_{\tau_{\rho,n,\delta}} t^{1-2s} |\nabla U_n|^2\,dS + 2\rho\int_{\tau_{\rho,n,\delta}}t^{1-2s}\left|\frac{\partial U_n}{\partial\nu}\right|^2\, dS\\
\notag&=(N-2s)\int_{\Omega_{r,n,\rho,\delta}} t^{1-2s}|\nabla U_n| ^2\,dz,
\end{align}
where we have used that the outward unit normal vector on $\sigma_{r,n,\rho,\delta}$  is given by $\nu=(0,\dots,0,-1)$, while for every $z\in\tau_{r,n,\delta}$ and for every $z\in\tau_{\rho,n,\delta}$ it is given  by $\nu(z)=\pm z/|z|$, respectively; moreover, along $\gamma_{r,n,\delta}$ the tangential component of the gradient of $U_n$ is zero and thus $|\nabla U_n|^2=\left|\frac{\partial U_n}{\partial\nu}\right|^2$.

Now we are going to consider the limit as $\delta\to 0^+$ in \eqref{relnecint}. First, we observe that  the term on $\gamma_{r,n,\delta}$ has a sign by Lemma \ref{starshape}. Thus, we obtain the following inequality 
\begin{multline}\label{relnecineq}
-\int _{\sigma_{r,n,\rho,\delta}}\delta^{2-2s} |\nabla U_n|^2\, dx + 2\int_{\sigma_{r,n,\rho,\delta}} \delta^{1-2s} (x\cdot \nabla_x U_n + \delta \partial_t U_n)\partial_t U_n\,dx +r \int_{\tau_{r,n,\delta}} t^{1-2s} |\nabla U_n|^2\,dS \\
- 2r\int_{\tau_{r,n,\delta}}t^{1-2s}\left|\frac{\partial U_n}{\partial\nu}\right|^2\, dS-\rho \int_{\tau_{\rho,n,\delta}} t^{1-2s} |\nabla U_n|^2\,dS + 2\rho\int_{\tau_{\rho,n,\delta}}t^{1-2s}\left|\frac{\partial U_n}{\partial\nu}\right|^2\, dS\\
\geq(N-2s)\int_{\Omega_{r,n,\rho,\delta}} t^{1-2s}|\nabla U_n| ^2\,dz.
\end{multline}
From the fact that $\int_{\Omega_n}t^{1-2s}|\nabla U_n|^2\,dz<+\infty$, it follows that,
 along a sequence $\delta_k\to 0^+$,
\begin{equation*}
-\int _{\sigma_{r,n,\rho,\delta_k}}\delta_k^{2-2s} |\nabla U_n|^2\, dx+2\int_{\sigma_{r,n,\rho,\delta_k}}\delta_k^{2-2s}|\partial_t U_n|^2\, dx\to 0\quad \text{as $k\to \infty$}.
\end{equation*}
Furthermore, we observe that
\begin{equation}\label{eq:continuity}
t^{1-2s} \partial_t U_n\quad\text{and}\quad \nabla_x U_n\quad\text{are continuous in $\overline{\Omega_n\cap(B_r\setminus B_\rho)}$}. 
\end{equation}
Indeed, the continuity of $t^{1-2s} \partial_t U_n$ and $\nabla_x U_n$ away from $\partial \R^{N+1}_+$ is ensured by classical elliptic regularity theory, whereas the continuity away from  $\gamma_n$ up to 
$\partial \R^{N+1}_+$ follows from \cite{FalFel14}*{Lemma 3.3}; finally, 
since $\eta_n$ vanishes in a neighborhood of $\partial\mathcal C_n$ and the initial section of the vertical boundary 
starting from $\partial\mathcal C_n$ is straight, in view of  the definition of $\gamma_n$ and \eqref{eq:popfdelta1}, we can apply \cite{DelFelVit22}*{Lemma A.1} to deduce the 
continuity of $t^{1-2s} \partial_t U_n$ and $\nabla_x U_n$ in a neighbourhood of the edge $\overline\gamma_n\cap \partial \R^{N+1}_+$. From \eqref{eq:continuity} and the dominated convergence theorem it follows that
\begin{align*}
\int_{\sigma_{r,n,\rho,\delta}}\delta^{1-2s}(x\cdot \nabla _x U_n)\partial_t U_n \, dx \to &-\kappa_s \int_{\mathcal{C}_n\cap(B'_r\setminus \overline{B'_\rho})}\eta_n\left(h_n+\frac{\lambda}{|x|^{2s}}\right)\mathop{\rm Tr}U_n (x\cdot\nabla _x U_n) \, dx\\
&=
-\kappa_s \int_{\mathcal{C}\cap(B'_r\setminus \overline{B'_\rho})}\eta_n\left(h_n+\frac{\lambda}{|x|^{2s}}\right)\mathop{\rm Tr}U_n (x\cdot\nabla _x U_n) \, dx
\quad \text{as $\delta\to 0$}. 
\end{align*}
So, by the absolute continuity of the Lebesgue integral, taking into account \eqref{eq:continuity} and passing to the limit as $k\to \infty$ in \eqref{relnecineq} along the sequence $\delta=\delta_k\to 0^+$, we infer that, for all $r\in (0,R_0)$, $n>1/r$ and $\rho<\frac1{n^2}$,
\begin{align}\label{relnecineq2}
-2&\kappa_s\int _{\mathcal C  \cap(B'_r\setminus \overline{B'_\rho})} \eta_n \left(h_n+\frac{\lambda}{|x|^{2s}}\right) \mathop{\rm Tr}U_n(x\cdot \nabla_x U_n)\,dx+r \int_{\Omega_{n}\cap \partial B_r} t^{1-2s} |\nabla U_n|^2\,dS\\
\notag& \quad- 2r\int_{\Omega_{n}\cap \partial B_r}t^{1-2s}\left|\frac{\partial U_n}{\partial\nu}\right|^2\, dS-\rho\int_{\Omega_{n}\cap \partial B_\rho} t^{1-2s} |\nabla U_n|^2\,dS + 2\rho\int_{\Omega_{n}\cap \partial B_\rho}t^{1-2s}\left|\frac{\partial U_n}{\partial\nu}\right|^2\, dS\\
\notag&\geq(N-2s)\int_{\Omega_{n}\cap (B_r\setminus \overline{B_\rho})} t^{1-2s}|\nabla U_n| ^2\,dz.
\end{align}
The next step is to  take the limit as $\rho\to 0^+$ in \eqref{relnecineq2}: to this purpose, we apply the divergence theorem to rewrite the integral over the set $\mathcal{C}\cap (B'_r\setminus \overline{B'_\rho})$ as follows
\begin{align}\label{pezzodatratt}
&\int_{\mathcal C \cap (B'_r\setminus \overline{B'_\rho})}\! \eta_n\bigg(\! h_n\!+\!\frac{\lambda}{|x|^{2s}} \!\bigg)\!\mathop{\rm Tr}U_n(x\!\cdot\! \nabla_x U_n)\,dx 
=\frac{1}{2} \int_{\mathcal{C}\cap (B'_r\setminus \overline{B'_\rho})}\!\! \mathop{\rm div} \bigg(\!\eta_n\bigg(\! h_n\!+\!\frac{\lambda}{|x|^{2s}} \!\bigg)|\mathop{\rm Tr}U_n|^2 x\!\bigg)dx\\
\notag&\quad-\frac{1}{2} \int _{\mathcal{C}\cap (B'_r\setminus \overline{B'_\rho})}\mathop{\rm div}\bigg(\eta_n\bigg( h_n+\frac{\lambda}{|x|^{2s}} \bigg)x\bigg)|\mathop{\rm Tr}U_n|^2\, dx\\
\notag=&\,\frac{r}{2}\int_{\mathcal{C}\cap \partial B'_r}\eta_n\left( h_n+\frac{\lambda}{|x|^{2s}}\right) |\mathop{\rm Tr}U_n|^2\,dS'-\frac{\rho}{2}\int_{\mathcal C\cap \partial B'_\rho}\eta_n\left( h_n+\frac{\lambda}{|x|^{2s}}\right) |\mathop{\rm Tr}U_n|^2\,dS'\\
\notag& \quad -\frac{1}{2} \int_{\mathcal{C}\cap (B'_r\setminus \overline{B'_\rho})}(\eta_n\nabla  h_n\cdot x + N\eta_n h_n ) |\mathop{\rm Tr}U_n|^2\, dx- \frac{N-2s}{2}\int_{\mathcal{C}\cap (B'_r\setminus \overline{B'_\rho})}\eta_n \frac{\lambda}{|x|^{2s}} |\mathop{\rm Tr}U_n|^2\, dx\\
\notag& \quad -\frac{1}{2} \int_{\mathcal{C}\cap (B'_r\setminus \overline{B'_\rho})}\left(h_n+\frac{\lambda}{|x|^{2s}}\right)|\mathop{\rm Tr}U_n|^2(\nabla \eta_n\cdot x) \, dx,
\end{align}
where we have used that $\nabla \left(\lambda |x|^{-2s}\right)\cdot x= -2s  \lambda |x|^{-2s}$.
Since $U_n\in H^1(\Omega_n, t^{1-2s})$ and $\eta_n(h_n+\lambda|x|^{-2s}) |\mathop{\rm Tr}U_n|^2 \in L^1(\mathcal C_n\cap B_r')$ in view of \cite{FalFel14}*{Lemma 2.5}, there exists a sequence $\rho_j\to 0^+$ such that  
\begin{equation}\label{rhoa0}
   \rho_j \left(\int_{\Omega_n\cap \partial B_{\rho_j}} t^{1-2s} |\nabla U_n|^2\, dS
   +\int_{\partial B'_{\rho_j}}\eta_n \left|h_n+\frac{\lambda}{|x|^{2s}}\right| |\mathop{\rm Tr}U_n|^2\, dS'   \right)\to 0\qquad \text{as $j\to \infty$}.
\end{equation}
Since $U_n\in H^1(\Omega_n,t^{1-2s})$, $\frac\lambda{|x|^{2s}} |\mathop{\rm Tr}U_n|^2 \in L^1(\mathcal C\cap B_r')$ by \cite{FalFel14}*{Lemma 2.5}, and $h_n,\eta_n\in C^\infty(\overline{\mathcal{C}\cap B'_1})$, the Lebesgue dominated convergence theorem yields, as $\rho\to 0^+$,
\begin{align*}
&\int_{\mathcal{C}\cap (B'_r\setminus \overline{B'_\rho})} 
\left(h_n+\frac{\lambda}{|x|^{2s}}\right) |\mathop{\rm Tr}U_n|^2 (\nabla \eta_n\cdot x)\, dx\to 
\int_{\mathcal{C}\cap B'_r} 
\left(h_n+\frac{\lambda}{|x|^{2s}}\right) |\mathop{\rm Tr}U_n|^2 (\nabla \eta_n\cdot x)\, dx,\\
& \int_{\Omega_n\cap (B_r\setminus \overline{B_\rho})} t^{1-2s} |\nabla U_n|^2\, dz \to \int_{\Omega_n\cap B_r} t^{1-2s}|\nabla U_n|^2\, dz,\\
 &   \int_{\mathcal{C}\cap (B'_r\setminus \overline{B'_\rho})} (\eta_n\nabla h_n\cdot x + N\eta_n h_n)|\mathop{\rm Tr}U_n|^2\, dx \to \int_{\mathcal{C}\cap B'_r} (\eta_n\nabla h_n\cdot x + N\eta_n h_n)|\mathop{\rm Tr}U_n|^2\, dx,\\
    &\int_{\mathcal{C}\cap (B'_r\setminus \overline{B'_\rho})} \eta_n \frac{\lambda}{|x|^{2s}} |\mathop{\rm Tr}U_n|^2\,dx\to \int_{\mathcal{C}\cap B'_r} \eta_n \frac{\lambda}{|x|^{2s}} |\mathop{\rm Tr}U_n|^2\,dx.
  \end{align*}
Taking into account the above convergences, \eqref{pezzodatratt}, and \eqref{rhoa0}, we can pass to the limit as $j\to \infty$ in \eqref{relnecineq2} with $\rho=\rho_j$, thus obtaining  that, for all $r\in (0,R_0)$ and $n>1/r$,
\begin{multline}\label{ultimaeq}
r\int_{\Omega_n\cap \partial B_r}\!t^{1-2s}|\nabla U_n|^2 dS -r\kappa_s\int_{\mathcal{C}\cap \partial B'_r}\!\eta_n\bigg(\!h_n+\frac{\lambda}{|x|^{2s}}\!\bigg)|\mathop{\rm Tr}U_n|^2dS'-2r\int_{\Omega_n\cap \partial B_r}\!t^{1-2s}\left|\frac{\partial U_n}{\partial\nu}\right|^2 dS\\
+\kappa_s\int_{\mathcal{C}\cap B'_r}(\eta_n \nabla  h_n\cdot x+N\eta_n h_n)|\mathop{\rm Tr}U_n|^2\, dx + \kappa_s \int_{\mathcal{C}\cap B'_r} \left(h_n+\frac{\lambda}{|x|^{2s}}\right)|\mathop{\rm Tr}U_n|^2(\nabla \eta
_n\cdot x)\,dx\\
\geq \,(N-2s) \bigg(\int_{\Omega_n\cap B_r}t^{1-2s}|\nabla U_n|^2\,dz- \lambda\kappa_s\int_{\mathcal{C}\cap B'_r}\eta
_n\frac{|\mathop{\rm Tr}U_n|^2}{|x|^{2s}}\,dx \bigg).
\end{multline}
Finally, we aim at passing to the limit as $n\to \infty$ in \eqref{ultimaeq}. Extending $U_n$ trivially to zero in $B^+_r\setminus \Omega_n$, from Lemma \ref{convdelleUn} it follows that 
\begin{equation}\label{c1}
   \int_{\Omega_n\cap B_r} t^{1-2s} |\nabla U_n|^2\,dz\to \int_{B^+_r}t^{1-2s} |\nabla U|^2\,dz\qquad \text{as $n\to \infty$. }  
\end{equation}
From this, by using the coarea formula, we infer that, up to a subsequence still denoted with $U_n$,
\begin{equation}\label{c2}
\int_{\Omega_{n}\cap \partial B_r} \!\!t^{1-2s}|\nabla U_n| ^2dS\!\to\!  \int_{\partial^+ B^+_r}\! t^{1-2s}|\nabla U| ^2dS, \quad
\int_{\Omega_{n}\cap \partial B_r}\!\! t^{1-2s}\left|\tfrac{\partial U_n}{\partial\nu}\right|^2dS\!\to\!  \int_{\partial^+ B^+_r} \!t^{1-2s}\left|\tfrac{\partial U}{\partial\nu}\right|^2dS  
\end{equation}
as $n\to \infty$, for a.e. $r\in (0,R_0)$. 
Furthermore, using \eqref{lehnconverg}, the fact that $\mathop{\rm Tr}U_n\to \mathop{\rm Tr}U$ in $L^{2^\ast(s)}(B'_{R_0})$ as a consequence of Lemma \ref{convdelleUn} and \eqref{eq:lemma2.6FF}, and  exploiting the convergence of $\eta_n$ to 1 a.e. in $\mathcal{C}\cap B'_{R_0}$, we deduce that  
\begin{equation}\label{c3}
\int_{\mathcal{C}\cap B'_r} (\eta_n \nabla h_n \cdot x + N\eta_n h_n)\,|\mathop{\rm Tr}U_n|^2\, dx \to 
\int_{\mathcal{C}\cap B'_r} (\nabla h \cdot x + Nh)  |\mathop{\rm Tr}U|^2\, dx \quad\text{as $n\to \infty$}, 
\end{equation}
and that, up to a subsequence still denoted with $U_n$, 
\begin{equation}\label{c4}
\int_{\mathcal C\cap \partial B'_r} \eta_n h_n |\mathop{\rm Tr}U_n|^2\, dS'\to \int_{\mathcal{C}\cap \partial B'_r} h  |\mathop{\rm Tr}U|^2\, dS' \quad\text{as $n\to \infty$,\quad for a.e. $r\in (0,R_0)$.}
\end{equation}
Going further, we claim that
\begin{equation}\label{convergenzadelgraddieta}
 \int_{ \mathcal{C}\cap B'_r} \left(h_n + \frac{\lambda}{|x|^{2s}}\right) |\mathop{\rm Tr}U_n|^2(\nabla \eta_n\cdot x) \, dx \to 0\quad \text{as $n\to \infty$}.
\end{equation}
Indeed, since $\eta'(\tau)\neq 0$ if and only if $1\leq \tau\leq 2$, we have 
\begin{equation*}
\nabla \eta_n \cdot x = \eta'(n d(x))n \nabla d(x)\cdot x = 
\begin{cases}
\eta'(n d(x))n d(x) &\text{if $1\leq n d(x)\leq 2$},\\
0 &\text{otherwise},
\end{cases}
\end{equation*}
by definition of $\eta_n$ and  Lemma \ref{1om}. Hence  $\{\nabla \eta_n \cdot x\}$ is  bounded in $ L^\infty(\mathcal{C}\cap B'_r)$ uniformly with respect to $n$. 
Furthermore, since $h_n\to h$ in $L^p(\mathcal C\cap B_{R_0}')$ by \eqref{lehnconverg} and $\mathop{\rm Tr}U_n\to \mathop{\rm Tr}U$ in $L^{2^\ast(s)}(B'_{R_0})$ by \eqref{eq:lemma2.6FF} and Lemma \ref{convdelleUn}, we have 
\begin{equation}\label{xalla-sconv2}
    h_n|\mathop{\rm Tr}U_n|^2\to   h|\mathop{\rm Tr}U|^2 \quad\text{in }L^1(\mathcal C\cap B_{R_0}'),
\end{equation}
whereas  from \cite{FalFel14}*{Lemma 2.5} and Lemma \ref{convdelleUn} it follows that \begin{equation}\label{xalla-sconv}
\frac{|\mathop{\rm Tr}U_n|^2}{|x|^{2s}}\to\frac{|\mathop{\rm Tr}U|^2}{|x|^{2s}}\quad \text{in $L^{1}(\mathcal{C}\cap B'_{R_0})$}.
\end{equation} 
 Along a subsequence, the convergences \eqref{xalla-sconv2} and \eqref{xalla-sconv} also hold a.e. in $(0,R_0)$ and $\big|h_n + \frac{\lambda}{|x|^{2s}}\big||\mathop{\rm Tr}U_n|^2$  is almost everywhere dominated by a $L^1(\mathcal{C}\cap B'_{R_0})$-function uniformly with respect to $n$.
Therefore, by the Lebesgue dominated convergence theorem we conclude that 
\begin{multline*}\label{tia}
\lim_{n\to\infty}
\int_{ \mathcal{C}\cap B'_r} \left(h_n + \frac{\lambda}{|x|^{2s}}\right)|\mathop{\rm Tr}U_n|^2(\nabla\eta_n\cdot x) \, dx\\
=\lim_{n\to\infty} \int_{\mathcal{C}\cap B'_r} \chi_{\{\frac{1}{n}\leq d(x)\leq \frac{2}{n}\}}\left(h_n + \frac{\lambda}{|x|^{2s}}\right)|\mathop{\rm Tr}U_n|^2(\nabla \eta_n\cdot x) \, dx=0,
\end{multline*}
where $\chi$ stands for the characteristic function.
Claim \eqref{convergenzadelgraddieta} is thereby proved.  

To complete the proof, it remains to notice that 
\begin{equation}\label{eq4}
\int_{\mathcal{C}\cap B'_r}\eta_n \frac{|\mathop{\rm Tr}U_n|^2}{|x|^{2s}}\, dx\to \int_{\mathcal{C}\cap B'_r} \frac{|\mathop{\rm Tr}U|^2}{|x|^{2s}}\, dx \quad \text{as $n\to\infty$}
\end{equation}
as a consequence of \eqref{xalla-sconv}. 
From this and the coarea formula, up to a subsequence still denoted with $U_n$, we have 
\begin{equation}\label{eq5}
\int_{\mathcal{C}\cap \partial B'_r}\eta_n \frac{|\mathop{\rm Tr}U_n|^2}{|x|^{2s}}\, dS'\to  \int_{\mathcal{C}\cap\partial B'_r} \frac{|\mathop{\rm Tr}U|^2}{|x|^{2s}}\, dS'\quad\text{as $n\to\infty$, for a.e. $r\in (0,R_0)$}.
\end{equation}
Assembling \eqref{c1}, \eqref{c2}, \eqref{c3}, \eqref{c4}, \eqref{convergenzadelgraddieta}, \eqref{eq4} and \eqref{eq5}, we arrive at \eqref{poho} for a e. $r\in (0,R_0)$. 

In order to prove \eqref{poho2}, we test equation \eqref{eqappro} with $U_n$ itself and then we pass to the limit proceeding as above.  
\end{proof}

\section{Almgren-type monotonicity formula and blow-up analysis}\label{sezionemonotonia}

\subsection{Almgren-type monotonicity formula}
In this section we prove a monotonicity formula for the Almgren frequency function.
First of all, for  a fixed nontrivial weak solution $U\in H^1(B^+_1, t^{1-2s})$ to problem \eqref{eq1EXT},
we introduce  the following two functions:
\begin{align}\label{D}
&D:(0,1)\to\R,\quad D(r)= r^{2s-N}\left(\int_{B^+_r}t^{1-2s}|\nabla U|^2\,dz-\kappa_s\int_{\mathcal{C}\cap B'_r} \left(h+\frac{\lambda}{|x|^{2s}}\right)|\mathop{\rm Tr}U|^2\,dx\right),\\
\label{H}
&H:(0,1)\to\R,\quad H(r)= r^{2s-N-1}\int_{\partial^+B^+_r} t^{1-2s} U^2\, dS.
\end{align}
Then we define the Almgren frequency function as 
\begin{equation}\label{N}
\mathcal N(r)=\frac{D(r)}{H(r)} \quad \text{for every $r\in (0,R_0]$},
\end{equation} 
being $R_0$ as in \eqref{R0}; we note that the above function is well-defined,
since $H$ turns out to be strictly positive in $(0,R_0]$, as stated in the following lemma. 
\begin{lemma}\label{Hpositiva}
The function $H$ defined in \eqref{H} is strictly positive in $(0,R_0]$, with $R_0$ being as in \eqref{R0}. 
\end{lemma}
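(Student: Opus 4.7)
The natural approach is by contradiction: suppose $H(r_0)=0$ for some $r_0\in(0,R_0]$, and aim to derive $U\equiv 0$ in $B_1^+$, contradicting nontriviality. Since $H(r_0)=0$ is equivalent to $\int_{\partial^+B_{r_0}^+}t^{1-2s}U^2\,dS=0$, the function $U$ has zero weighted trace on the spherical portion of the boundary of $B_{r_0}^+$. Consequently the truncation $V:=U\chi_{B_{r_0}^+}$ (extended by zero outside $B_{r_0}^+$) belongs to $H^1(B_1^+,t^{1-2s})$, has compact support in $\overline{B_{r_0}^+}\Subset B_1^+\cup B_1'$ (recall that $R_0<1$ by definition of $r_{\alpha_0}$), and satisfies $\mathop{\rm Tr}V=0$ on $B_{r_0}'\setminus\mathcal C$.

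The first key step is to use $V$ as a test function in the weak formulation \eqref{extensionweak}. Since $V$ is not a priori of class $C^\infty_c(B_1^+\cup(\Omega\cap B_1'))$, an approximation argument is needed: combining a tubular cutoff away from $\partial\mathcal C$ (to move the support of $\mathop{\rm Tr}V$ strictly inside $\Omega\cap B_1'$) with a small radial cutoff excising a neighborhood of the vertex $0$, one approximates $V$ in $H^1(B_1^+,t^{1-2s})$ by admissible test functions; the Hardy inequality \eqref{fract2} ensures that the Hardy term $\kappa_s\lambda\int |x|^{-2s}|\mathop{\rm Tr}V|^2$ is stable under these cutoffs. Inserting $V$ into \eqref{extensionweak} yields
\begin{equation*}
\int_{B_{r_0}^+}t^{1-2s}|\nabla U|^2\,dz-\kappa_s\int_{\mathcal C\cap B_{r_0}'}\left(h+\frac{\lambda}{|x|^{2s}}\right)|\mathop{\rm Tr}U|^2\,dx=0.
\end{equation*}

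Next I would invoke Lemma \ref{lemusefulineq} with $f=h$ and $r=r_0$ (applicable since $\|h\|_{L^p(\mathcal C\cap B_1')}\le\alpha_0$ and $r_0\le R_0=r_{\alpha_0}$): combined with the vanishing of $\int_{\partial^+B_{r_0}^+}t^{1-2s}U^2\,dS$, the identity above forces $0\ge\tilde C\int_{B_{r_0}^+}t^{1-2s}|\nabla U|^2\,dz$, hence $\nabla U\equiv 0$ in $B_{r_0}^+$. Combined with the vanishing trace on $\partial^+B_{r_0}^+$, this gives $U\equiv 0$ in $B_{r_0}^+$. Finally, since in $B_1^+\cap\{t>0\}$ the equation $\operatorname{div}(t^{1-2s}\nabla U)=0$ is classically uniformly elliptic with smooth coefficients, and $U$ vanishes on the open set $B_{r_0}^+\cap\{t>0\}$, the classical unique continuation principle (or the Muckenhoupt-weighted version of \cites{GarLin86,TaoZhang2008}) implies $U\equiv 0$ in $B_1^+$, yielding the required contradiction.

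The main obstacle is the approximation step justifying the use of $V=U\chi_{B_{r_0}^+}$ as a test function: one must simultaneously cope with the ``edge'' where $\partial^+B_{r_0}^+$ meets the thin boundary $B_{r_0}'$, and with the non-regularity of $\partial\mathcal C$ at the vertex $0$. These difficulties are analogous to, and can be handled by a simplified version of, the smoothing scheme already developed in Section \ref{sec:approx} for the cone (replacing $\mathcal C$ by $\mathcal C_n$ and regularizing near the vertex), which is precisely what makes a direct density argument available in this conical setting.
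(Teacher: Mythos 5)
Your proposal is correct and takes essentially the same route as the paper: argue by contradiction, observe that vanishing of $H(r_0)$ allows the truncation $U\chi_{B_{r_0}^+}$ to be used (after a density argument) as a test function in \eqref{extensionweak}, invoke Lemma~\ref{lemusefulineq} to force $\nabla U\equiv0$ in $B_{r_0}^+$, and then conclude by unique continuation. The paper compresses the truncation/density step into the phrase ``testing \eqref{extensionweak} with $U$ itself'' and derives $U\equiv0$ in $B_{r_0}^+$ from the constancy of $U$ together with the Dirichlet condition $\mathop{\rm Tr}U=0$ on $B_1'\setminus\Omega$, whereas you use the vanishing on $\partial^+B_{r_0}^+$; these are equivalent, and your more explicit treatment of the cutoff approximation is a legitimate way to justify what the paper leaves implicit.
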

\begin{proof}
We argue by contradiction, assuming that $H$ is null at some $\bar{r}\in(0,R_0]$. This implies that $U\equiv 0$ on $\partial^+ B^+_{\bar{r}}$. Hence, by testing \eqref{extensionweak} with $U$ itself,
\begin{equation*}
\int_{B^+_{\bar{r}}}t^{1-2s} |\nabla U|^2\, dz -\kappa_s \int_{\mathcal{C}\cap B'_{\bar{r}}}\left(h+\frac{\lambda}{|x|^{2s}}\right)|\mathop{\rm Tr}U|^2\, dx =0,
\end{equation*}
which in turn, together with Lemma \ref{lemusefulineq}, leads us to $\int_{B^+_{\bar{r}}}t^{1-2s} |\nabla U|^2\, dz=0$. Thus, 
$U$ is constant on $B^+_{\bar{r}}$. Since $\mathop{\rm Tr}U=0$ on $ B'_1\setminus \Omega$, we necessarily have $U\equiv0$ in $B^+_{\bar{r}}$. This is enough to conclude that $U$ is identically zero in $B^+_1$ by classical unique continuation principles for second order elliptic equations  with bounded coefficients, see e.g. \cite{GarLin86}, thus producing a contradiction.
\end{proof}

As a direct consequence of Lemma \ref{lemusefulineq}, the Almgren 
frequency function $\mathcal N$ can be estimated from below as follows. 
\begin{lemma}
For every $r\in (0,R_0]$
\begin{equation}\label{Nmagg}
\mathcal N(r)> -\frac{N-2s}{2}.
\end{equation}
\end{lemma}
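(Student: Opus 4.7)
Since $H(r)>0$ on $(0,R_0]$ by Lemma~\ref{Hpositiva}, the inequality $\mathcal N(r)>-\frac{N-2s}{2}$ is equivalent, after multiplying by $H(r)$ and rearranging, to
\begin{equation*}
\int_{B^+_r}t^{1-2s}|\nabla U|^2\,dz-\kappa_s\int_{\mathcal C\cap B'_r}\!\Big(h+\tfrac{\lambda}{|x|^{2s}}\Big)|\mathop{\rm Tr}U|^2\,dx+\frac{N-2s}{2r}\int_{\partial^+B^+_r}t^{1-2s}U^2\,dS>0.
\end{equation*}
The plan is therefore to apply the coercivity inequality \eqref{usefulineq} from Lemma~\ref{lemusefulineq} to the choice $V=U$ and $f=h$.

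To do so I first verify the hypotheses of Lemma~\ref{lemusefulineq}. The constant $R_0$ is defined in \eqref{R0} precisely as $r_{\alpha_0}$ for $\alpha_0=\sup_n\|h_n\|_{L^p(\mathcal C\cap B_1')}$, so taking $\alpha=\|h\|_{L^p(\mathcal C\cap B_1')}$ (which is $\le\alpha_0$ by \eqref{lehnconverg}) the lemma is applicable on every $r\in(0,R_0]$. The trace condition $\mathop{\rm Tr}U\equiv 0$ on $B'_r\setminus\mathcal C$ follows from the boundary condition in \eqref{eq1EXT} together with the localization hypothesis that $\Omega\cap B_1=\mathcal C\cap B_1$, since $B'_r\setminus\mathcal C=B'_r\setminus\Omega$ for $r<1$.

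Multiplying \eqref{usefulineq} by $r^{2s-N}>0$ then yields
\begin{equation*}
D(r)+\frac{N-2s}{2}H(r)\ge \tilde C\left(r^{2s-N}\!\int_{B^+_r}\!t^{1-2s}|\nabla U|^2\,dz+\frac{N-2s}{2}H(r)\right),
\end{equation*}
with $\tilde C\in(0,1)$. Since both terms on the right are non-negative and $\tilde C\,\frac{N-2s}{2}H(r)>0$ by Lemma~\ref{Hpositiva} and the standing assumption $N\ge 2>2s$, the right-hand side is strictly positive. Dividing by $H(r)>0$ produces $\mathcal N(r)+\frac{N-2s}{2}>0$, as desired.

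There is really no obstacle here: the lemma is a three-line consequence of Lemma~\ref{lemusefulineq} and Lemma~\ref{Hpositiva}. The only point that deserves a brief mention is the verification of the trace condition needed to invoke the coercivity inequality, which is where the conical structure of $\Omega$ near $0$ enters.
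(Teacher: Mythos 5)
Your proof is correct and follows the same route as the paper, which deduces the bound directly from the coercivity inequality \eqref{usefulineq} of Lemma~\ref{lemusefulineq} together with the definitions \eqref{D}--\eqref{N} and the positivity of $H$. You supply a more explicit verification of the hypotheses (in particular that $R_0=r_{\alpha_0}$ makes Lemma~\ref{lemusefulineq} applicable with $f=h$ on all of $(0,R_0]$, and that the trace condition holds since $\Omega\cap B'_1=\mathcal C\cap B'_1$); the paper leaves these checks implicit, but there is no substantive difference.
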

\begin{proof}
The claim easily follows from  \eqref{usefulineq} and  \eqref{D}--\eqref{N}. 
\end{proof}

In order to provide also an estimate from above for the Almgren frequency function $\mathcal N$, and then to prove the existence of $\lim_{r\to 0^+}\mathcal N(r)\in\mathbb{R}$, some information on the derivative of $\mathcal N$ is needed. For this, we prove below some results involving the derivatives of $D$ and $H$: the Pohozaev inequality \eqref{poho} will be a relevant tool for this purpose.
\begin{lemma}\label{properties}
Let $D$ and $H$ be defined  in \eqref{D} and \eqref{H}, respectively. Then
\begin{itemize}
\item[\emph{(i)}] $H\in W^{1,1}_{\mathrm{loc}}(0,1)$ and 
\begin{equation}\label{H'}
H'(r)=2 r^{2s-N-1}\int_{\partial^+B^+_r} t^{1-2s} U \frac{\partial U}{\partial\nu}\, dS 
\end{equation}
in a distributional sense and a.e in $(0,1)$;
\item[\emph{(ii)}] $D\in W^{1,1}_{\mathrm{loc}}(0,1)$ and 
\begin{equation}\label{D'}
D'(r)\geq \ 2 r^{2s-N}\int_{\partial^+B^+_r} t^{1-2s}\left|\frac{\partial U}{\partial\nu}\right|^2\,dS 
-\kappa_s r^{2s-N-1}\int_{\mathcal C\cap B'_r} (2s h+\nabla h \cdot x) |\mathop{\rm Tr}U|^2\, dx
\end{equation}
in a distributional sense and a.e. in $(0,R_0)$;
\item[\emph{(iii)}] for a.e. $r\in (0,R_0)$
\begin{equation}\label{H'=D}
H'(r)=\frac{2}{r}D(r).
\end{equation}
\end{itemize}
\end{lemma}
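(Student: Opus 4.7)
\proof[Proof plan]
The three items will be handled in order, with (ii) carrying the bulk of the computation.

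\emph{Part (i).} I would pass to polar coordinates $z=r\theta$, $\theta\in\mathbb{S}^N_+$, which converts $H$ into an integral on the fixed hemisphere
\[
H(r) = \int_{\mathbb{S}^N_+}\theta_{N+1}^{1-2s}\,U^2(r\theta)\,dS,
\]
since the Jacobian $r^N$ and the factor $t^{1-2s} = r^{1-2s}\theta_{N+1}^{1-2s}$ cancel the prefactor $r^{2s-N-1}$. Because $U\in H^1(B^+_1,t^{1-2s})$, an ACL/Fubini-type argument gives that $r\mapsto U(r\,\cdot)$ is absolutely continuous as an $L^2(\mathbb{S}^N_+,\theta_{N+1}^{1-2s})$-valued function, with derivative $\nabla U(r\theta)\cdot\theta$ for a.e. $r$. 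Differentiating under the integral sign and transforming back via $\nabla U(r\theta)\cdot\theta = \partial U/\partial\nu$ at $z=r\theta$ produces \eqref{H'}.

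\emph{Part (iii).} The Pohozaev identity \eqref{poho2} rearranges into
\[
D(r) = r^{2s-N}\int_{\partial^+B^+_r} t^{1-2s}\,U\,\tfrac{\partial U}{\partial\nu}\,dS,
\]
which, compared with \eqref{H'}, directly yields $H'(r)=\frac{2}{r}D(r)$ for a.e. $r\in(0,R_0)$.

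\emph{Part (ii).} This is the computational core. Set
\[
I(r):=\!\int_{B^+_r}\!\!t^{1-2s}|\nabla U|^2\,dz,\quad J_1(r):=\kappa_s\lambda\!\int_{\mathcal C\cap B'_r}\!\!\tfrac{|\mathop{\rm Tr}U|^2}{|x|^{2s}}\,dx,\quad J_2(r):=\kappa_s\!\int_{\mathcal C\cap B'_r}\!\!h|\mathop{\rm Tr}U|^2\,dx,
\]
so that $D(r)=r^{2s-N}(I-J_1-J_2)$. Each of $I$, $J_1$, $J_2$ belongs to $W^{1,1}_{\mathrm{loc}}$ by the coarea formula, using \eqref{fract2} to control the Hardy term and H\"older combined with \eqref{eq:lemma2.6FF} to control $h|\mathop{\rm Tr}U|^2$. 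Recognizing the last boundary integral in \eqref{poho} as $\frac{r}{2\kappa_s}J_2'(r)\cdot\kappa_s$, the Pohozaev inequality rearranges to
\[
I'(r)-J_1'(r)-J_2'(r) \geq 2\!\!\int_{\partial^+B^+_r}\!\!t^{1-2s}\Big|\tfrac{\partial U}{\partial\nu}\Big|^2 dS + \tfrac{N-2s}{r}(I-J_1) - \tfrac{\kappa_s}{r}\!\int_{\mathcal C\cap B'_r}\!(\nabla h\cdot x + Nh)|\mathop{\rm Tr}U|^2 dx.
\]
Inserting this into $D'(r)=(2s-N)r^{2s-N-1}(I-J_1-J_2)+r^{2s-N}(I'-J_1'-J_2')$ and collecting, the two $(N-2s)$ contributions combine to $(N-2s)r^{2s-N-1}J_2(r)$; this merges with the $h$-integral via the cancellation $(N-2s)h - Nh - \nabla h\cdot x = -(2sh + \nabla h\cdot x)$, yielding exactly \eqref{D'}.

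The main obstacle is justifying the $W^{1,1}_{\mathrm{loc}}$ regularity and the coarea differentiation for the three pieces $I$, $J_1$, $J_2$: the Hardy-type term is the most delicate and is precisely controlled by \eqref{fract2}, while $J_2$ needs the subcritical summability built into \eqref{eq:ipo-h} through H\"older's inequality paired with the trace estimate \eqref{eq:lemma2.6FF}. Once this is in place, the remaining steps are algebraic.
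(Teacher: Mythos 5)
Your proof is correct and follows essentially the same path the paper takes: part (i) via polar coordinates and absolute continuity (the paper outsources this to \cite{DelFelVit22}*{Lemma 3.1}, but the underlying argument is the one you sketch), part (ii) by differentiating $D$ via coarea and then substituting the Pohozaev inequality \eqref{poho}, and part (iii) by combining \eqref{poho2}, \eqref{D}, and \eqref{H'}. The algebra in part (ii) — including the cancellation of the $(N-2s)r^{2s-N-1}(I-J_1)$ terms and the reduction $(N-2s)h - Nh - \nabla h\cdot x = -(2sh+\nabla h\cdot x)$ — checks out, so your computation correctly reproduces \eqref{D'}.
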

\begin{proof}
We omit the proof of  (i) because it is similar to that of \cite{DelFelVit22}*{Lemma 3.1} (see identity  (3.6) in \cite{DelFelVit22}).
With regard to (ii), the regularity of $D$ follows from the coarea formula, which ensures that
\begin{equation*}
r\mapsto \int_{B^+_r}t^{1-2s}|\nabla U|^2\,dz-\kappa_s\int_{\mathcal C\cap B'_r} \left(h+\frac{\lambda}{|x|^{2s}}\right)|\mathop{\rm Tr}U|^2\,dx \in W^{1,1}(0,1),
\end{equation*}
in view of \eqref{fract2}, \eqref{eq:lemma2.6FF},  \eqref{eq:ipo-h}, and the fact that  $U\in H^1(B^+_1, t^{1-2s})$. Moreover,  \eqref{D'} can be derived by using first the coarea formula to write the distributional derivative of $D$,  and then  \eqref{poho} to estimate from below the term $\int_{\partial ^+B^+_r} t^{1-2s}|\nabla U|^2\, dS
-\kappa_s\lambda \int_{\mathcal C\cap \partial B'_r} |x|^{-2s}|\mathop{\rm Tr}U|^2\,dS'$. Finally, \eqref{H'=D} follows from  \eqref{poho2}, \eqref{D}, and \eqref{H'}. 
\end{proof}
Now we have all the ingredients to provide an estimate from below of the derivative of $\mathcal N$.

\begin{lemma}\label{lemmaN'}
Let $\mathcal N$ be defined in \eqref{N}. Then $\mathcal N\in W^{1,1}_{\mathrm{loc}} (0,R_0)$.
Furthermore there exists a positive constant $C_1>0$, depending only on $N$, $s$, $h$, $\lambda$, and $\mathcal C$, such that
\begin{equation}\label{N'}
\mathcal N'(r)\geq -C_1\,r^{-1+\frac{2sp-N}{p}} \left(\mathcal N(r)+\frac{N-2s}{2}\right)\quad \text{for almost every $r\in (0,R_0)$.}
\end{equation}
\end{lemma}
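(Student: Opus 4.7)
\medskip

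\noindent\textbf{Proof plan.}
The regularity $\mathcal N\in W^{1,1}_{\mathrm{loc}}(0,R_0)$ is immediate from Lemma \ref{properties}, which gives $D,H\in W^{1,1}_{\mathrm{loc}}$, together with Lemma \ref{Hpositiva}, which ensures that $H$ is bounded away from zero on any compact subinterval. Thus, almost everywhere on $(0,R_0)$,
\begin{equation*}
 \mathcal N'(r)=\frac{D'(r)H(r)-D(r)H'(r)}{H^2(r)}.
\end{equation*}
The strategy is to bound the numerator from below in such a way that the ``dangerous'' Neumann-derivative terms cancel via Cauchy--Schwarz, leaving only a controllable remainder coming from $h$.

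\medskip

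\noindent\textbf{Key cancellation.} From the Pohozaev identity \eqref{poho2} one reads off
\begin{equation*}
 D(r)=r^{2s-N}\int_{\partial^+B_r^+}t^{1-2s}\,U\,\frac{\partial U}{\partial\nu}\,dS,
\end{equation*}
so that $H'(r)=\tfrac{2}{r}D(r)$ by \eqref{H'=D} and hence
\begin{equation*}
D(r)H'(r)=\frac{2}{r}D(r)^2=\frac{2}{r}\,r^{2(2s-N)}\!\left(\int_{\partial^+B_r^+}t^{1-2s}U\,\tfrac{\partial U}{\partial\nu}\,dS\right)^{\!2}.
\end{equation*}
Applying the Cauchy--Schwarz inequality with respect to the weighted measure $t^{1-2s}\,dS$ and recalling that $\int_{\partial^+B_r^+}t^{1-2s}U^2\,dS=r^{N+1-2s}H(r)$, I obtain
\begin{equation*}
D(r)H'(r)\leq 2\,r^{2s-N}H(r)\int_{\partial^+B_r^+}t^{1-2s}\Big|\tfrac{\partial U}{\partial\nu}\Big|^2 dS.
\end{equation*}
On the other hand, multiplying inequality \eqref{D'} by $H(r)$ gives exactly the same Neumann-derivative term with the opposite sign. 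Thus
\begin{equation*}
D'(r)H(r)-D(r)H'(r)\geq -\kappa_s\,r^{2s-N-1}H(r)\int_{\mathcal C\cap B'_r}(2sh+\nabla h\cdot x)|\mathop{\rm Tr}U|^2\,dx.
\end{equation*}

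\medskip

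\noindent\textbf{Controlling the remainder.} It remains to estimate the integral on the right. Since $h\in W^{1,p}(\Omega)$ with $p>N/(2s)$, H\"older's inequality gives
\begin{equation*}
 \int_{\mathcal C\cap B'_r}\!\!\big(2s|h|+|x|\,|\nabla h|\big)|\mathop{\rm Tr}U|^2\,dx\leq \big(2s\|h\|_{L^p}+r\|\nabla h\|_{L^p}\big)\|\mathop{\rm Tr}U\|_{L^{2q}(B'_r)}^2,
\end{equation*}
with $q=p/(p-1)<2^*(s)/2$. Interpolating between $L^{2q}$ and $L^{2^*(s)}$ with the volume factor, one gets $\|\mathop{\rm Tr}U\|_{L^{2q}(B_r')}^2\leq C\,r^{(2sp-N)/p}\|\mathop{\rm Tr}U\|_{L^{2^*(s)}(B_r')}^2$. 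The Sobolev-type trace inequality \eqref{eq:lemma2.6FF} and then the coercivity Lemma \ref{lemusefulineq} yield
\begin{equation*}
\|\mathop{\rm Tr}U\|_{L^{2^*(s)}(B_r')}^2\leq C'\left(\int_{B_r^+}t^{1-2s}|\nabla U|^2 dz+\tfrac{N-2s}{2r}\!\int_{\partial^+B_r^+}\!t^{1-2s}U^2 dS\right)\leq C''\,r^{N-2s}\Big(D(r)+\tfrac{N-2s}{2}H(r)\Big),
\end{equation*}
where $C''$ depends only on $N,s,\lambda,h,\mathcal C$. Putting these estimates together, dividing by $H^2(r)$, and recalling that $\mathcal N(r)=D(r)/H(r)$, yields
\begin{equation*}
 \mathcal N'(r)\geq -C_1\,r^{-1+\frac{2sp-N}{p}}\Big(\mathcal N(r)+\tfrac{N-2s}{2}\Big),
\end{equation*}
which is \eqref{N'}. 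Note that the exponent $-1+(2sp-N)/p$ is strictly greater than $-1$, so the right-hand side is locally integrable near $0$, which is exactly what will later allow one to integrate a Gr\"onwall-type argument and conclude that $\mathcal N$ admits a finite limit at $0$.

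\medskip

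\noindent\textbf{Main difficulty.} The heart of the argument is the exact cancellation between the first term of \eqref{D'} and the Cauchy--Schwarz estimate for $D(r)H'(r)$; this relies crucially on the Pohozaev identity \eqref{poho2}, which in turn was the subject of Section \ref{sec:3} and required the delicate approximation scheme on the smoothed cones $\mathcal C_n$. The subsidiary but technically demanding step is interpolating the trace integral involving $h$ to produce the positive power $r^{(2sp-N)/p}$, which is the only remaining positive exponent and thus the one responsible for the integrability at $0$.
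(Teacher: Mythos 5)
Your proposal is correct and follows essentially the same route as the paper: both hinge on the Rellich--Pohozaev inequality \eqref{D'} together with $H'=\tfrac2rD$ and a Cauchy--Schwarz cancellation of the $|\partial_\nu U|^2$ terms, after which the lone remaining term involving $2sh+\nabla h\cdot x$ is controlled by H\"older, the critical trace inequality, and the coercivity of the quadratic form (the paper packages the last two steps in Lemma \ref{maggioredi2ast}, and uses a single three-exponent H\"older in place of your H\"older-plus-interpolation, but the exponent $r^{(2sp-N)/p}$ comes out identically).
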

\begin{proof}
We immediately infer that
$\mathcal N\in W^{1,1}_{\mathrm{loc}} (0,R_0)$, by observing that it is the product of $D$ and $1/H$, which belong to $W^{1,1}_{\mathrm{loc}} (0,1)$ by Lemma \ref{properties} (i-ii), with $H>0$ in $(0,R_0)$ in view of  Lemma \ref{Hpositiva}. By direct calculations, using first \eqref{H'=D}, and then \eqref{H'}--\eqref{D'}, we obtain  
\begin{align}\label{disugdiN'}
\mathcal N'(r)&\geq \ 2r\frac{\left(\int_{\partial^+ B^+_r} t^{1-2s}\left|\frac{\partial U}{\partial\nu}\right|^2\, dS\right)\left(\int_{\partial^+ B^+_r} t^{1-2s}U^2\, dS\right)-\left(\int_{\partial^+ B^+_r} t^{1-2s}U \frac{\partial U}{\partial\nu}\, dS\right)^2}{\left(\int_{\partial^+ B^+_r}t^{1-2s}U^2\, dS\right)^2}\\
&\notag \quad -\kappa_s\frac{\int_{\mathcal{C}\cap B'_r } (2sh+\nabla h\cdot x) |\mathop{\rm Tr}U|^2\, dx}{\int_{\partial^+ B^+_r} t^{1-2s}U^2\, dS }
\end{align}
for a.e. $r\in (0,R_0)$.
In order to to estimate the second term on the right-hand side of \eqref{disugdiN'},  we observe that, by the H\"{o}lder inequality and Lemma \ref{maggioredi2ast},
\begin{align*}
\biggl|\int_{\mathcal{C}\cap B'_r} &(2sh+\nabla h\cdot x)|\mathop{\rm Tr}U|^2\, dx\biggr| \leq \,V_N^{\frac{2sp-N}{Np}} \Vert 2sh+\nabla h\cdot x \Vert _{L^p(\mathcal C \cap B'_1)} r^{\frac{2sp-N}{p}}\bigg(\int_{\mathcal C\cap B'_r} |\mathop{\rm Tr}U|^{2^\ast(s)}\, dx\bigg)^{\!\!\frac{2}{2^\ast(s)}}\\
\leq &\,V_N^{\frac{2sp-N}{Np}}\frac{\tilde{S}_{N,s}}{\tilde C}\Vert 2sh+\nabla h\cdot x \Vert _{L^p(\mathcal{C}\cap B'_1)} r^{\frac{2sp-N}{p}} \biggl(\int_{B^+_r}t^{1-2s}|\nabla U|^2\, dz\\ 
&\hskip3cm -\kappa_s\int_{\mathcal{C}\cap B'_r}\bigg( h+\frac{\lambda}{|x|^{2s}}\bigg) |\mathop{\rm Tr}U|^2\,dx+\frac{N-2s}{2r}\int_{\partial^+ B^+_r}t^{1-2s}U^2\, dS\biggr)
\end{align*}
for all $r\in(0,R_0)$.
From this and exploiting \eqref{D}, \eqref{H} and \eqref{N}, we conclude that
\begin{equation}\label{modulo}
\left|
\kappa_s\frac{\int_{\mathcal{C}\cap B'_r } (2sh+\nabla h\cdot x) |\mathop{\rm Tr}U|^2\, dx}{\int_{\partial^+ B^+_r} t^{1-2s}U^2\, dS }
\right| \leq C_1 \,r^{-1+\frac{2sp-N}{p}} \left(\mathcal N(r)+\frac{N-2s}{2}\right)
\end{equation}
for some constant $C_1>0$ which depends only on $N$, $s$, $h$, $\lambda$, and $\mathcal C$. 

Observing that the first term on the right-hand side of \eqref{disugdiN'} is positive by the Cauchy-Schwarz inequality, applied to $U$ and $\frac{\partial U}{\partial\nu}$ as vectors in $L^2(\partial^+ B^+_r, t^{1-2s})$, and  combining \eqref{disugdiN'} and \eqref{modulo},  we finally arrive at \eqref{N'}, as desired. 
\end{proof}

Estimate \eqref{N'} allows us to prove the boundedness from above of $\mathcal N$ and thus the existence of $\lim_{r\to 0^+}\mathcal N(r)$. 
\begin{lemma}
There exists a constant $C_2>0$ such that
\begin{equation}\label{Nlimitdallalto}
\mathcal N(r)\leq C_2\quad\text{for every $r\in (0,R_0)$}.
\end{equation}
\end{lemma}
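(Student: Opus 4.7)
The plan is to integrate the differential inequality \eqref{N'} via an integrating-factor argument. First, I would set $F(r) := \mathcal N(r) + \frac{N-2s}{2}$, which belongs to $W^{1,1}_{\loc}(0,R_0)$ by Lemma~\ref{lemmaN'} and is \emph{strictly} positive on $(0,R_0)$ thanks to \eqref{Nmagg}. Writing $\alpha := \frac{2sp-N}{p}$, observe that $\alpha>0$ by assumption \eqref{eq:ipo-h}, so that the function $r \mapsto r^{-1+\alpha}$ is integrable near $0$; this is the crucial use of the hypothesis $p>N/(2s)$. In terms of $F$, inequality \eqref{N'} reads
\begin{equation*}
F'(r) + C_1\, r^{-1+\alpha} F(r) \geq 0 \quad \text{for a.e. } r\in(0,R_0).
\end{equation*}

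Next, I would define the smooth positive integrating factor
\begin{equation*}
\psi(r) := \exp\!\left(-C_1 \int_r^{R_0} \tau^{-1+\alpha}\, d\tau\right) = \exp\!\left(-\frac{C_1 p}{2sp-N}\left(R_0^{\alpha}-r^{\alpha}\right)\right),
\end{equation*}
so that $\psi'(r) = C_1 r^{-1+\alpha}\psi(r)$. Since $\psi \in C^\infty(0,R_0)$ and $F \in W^{1,1}_{\loc}(0,R_0)$, the product rule gives, a.e.\ in $(0,R_0)$,
\begin{equation*}
(\psi F)'(r) = \psi(r)\bigl(F'(r) + C_1 r^{-1+\alpha} F(r)\bigr) \geq 0,
\end{equation*}
so $\psi F$ is non-decreasing on $(0,R_0)$. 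Since $\psi(R_0)=1$, this yields $\psi(r) F(r) \leq F(R_0)$ for every $r \in (0,R_0)$, that is,
\begin{equation*}
\mathcal N(r) + \frac{N-2s}{2} \leq F(R_0)\exp\!\left(\frac{C_1 p}{2sp-N}R_0^{\alpha}\right)
\end{equation*}
for every $r\in(0,R_0)$. This gives the desired uniform upper bound \eqref{Nlimitdallalto} with an explicit constant $C_2$ depending only on $N$, $s$, $p$, $\lambda$, $\mathcal C$, $h$, $R_0$, and the value $\mathcal N(R_0)$.

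There is no real obstacle here: the argument is the standard Gronwall-type integration of a frequency estimate whose right-hand side is integrable near $r=0$. The only mild technical point is verifying that the product rule is permissible (true because $\psi$ is smooth and $F$ is absolutely continuous on compact subintervals of $(0,R_0)$) and that $\psi F$ being non-decreasing follows from the a.e.\ sign of its distributional derivative; both are routine for $W^{1,1}_{\loc}$ functions.
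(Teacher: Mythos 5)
Your proof is correct and is essentially the paper's argument: the paper divides \eqref{N'} by $\mathcal N+\frac{N-2s}{2}$ (licit by \eqref{Nmagg}) and integrates the resulting logarithmic derivative over $(r,R_0)$, which is algebraically identical to your integrating-factor formulation and yields the same constant $C_2=\big(\mathcal N(R_0)+\frac{N-2s}{2}\big)\exp\big(\frac{C_1 p}{2sp-N}R_0^{(2sp-N)/p}\big)$.
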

\begin{proof}
By \eqref{N'}, for almost every $\tau\in (0,R_0)$ we have 
\begin{equation*}
\left(\mathcal N(\cdot)+\frac{N-2s}{2}\right)'(\tau)\geq -C_1 \,\tau^{-1+\frac{2sp-N}{p}} \left(\mathcal N(\tau)+\frac{N-2s}{2}\right).
\end{equation*}
By \eqref{Nmagg},  we can divide  by $\mathcal N(\tau)+ \frac{N-2s}{2} $, thus obtaining, after integration  over $(r,R_0)$ for any  $r\in (0,R_0)$,
\begin{equation*}
\mathcal{N}(r)+\frac{N-2s}{2}\leq\left(\mathcal N(R_0)+\frac{N-2s}{2} \right) \exp\Big(\tfrac{C_1 p}{2sp-N} R_0^{\frac{2sp-N}{p}}\Big)\quad \text{for every $r\in (0,R_0)$}.
\end{equation*}
The proof of \eqref{Nlimitdallalto} is thereby complete. 
\end{proof}
\begin{lemma}\label{esistelimite}
The limit $\lim_{r\to 0^+}\mathcal N(r)$ does exist and is finite.
\end{lemma}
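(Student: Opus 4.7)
The plan is to turn the differential inequality \eqref{N'} into the statement that a suitable multiplicative modification of $\mathcal N$ is monotone non-decreasing, and then combine monotonicity with the two-sided bounds \eqref{Nmagg} and \eqref{Nlimitdallalto} to conclude existence and finiteness of the limit.

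More precisely, I would first set $\beta:=\frac{2sp-N}{p}>0$ (which is positive thanks to the assumption $p>N/(2s)$ in \eqref{eq:ipo-h}) and define the auxiliary function
\begin{equation*}
    g(r):=\Bigl(\mathcal N(r)+\tfrac{N-2s}{2}\Bigr)\exp\!\Bigl(\tfrac{C_1 p}{2sp-N}\, r^{\beta}\Bigr),\qquad r\in(0,R_0),
\end{equation*}
where $C_1$ is the constant provided by Lemma \ref{lemmaN'}. Since $\mathcal N\in W^{1,1}_{\mathrm{loc}}(0,R_0)$ by Lemma \ref{lemmaN'}, the same is true for $g$. A direct computation, using the estimate \eqref{N'} together with the strict positivity of $\mathcal N(r)+\frac{N-2s}{2}$ guaranteed by \eqref{Nmagg}, yields
\begin{equation*}
    g'(r)\geq \Bigl(\mathcal N(r)+\tfrac{N-2s}{2}\Bigr)\exp\!\Bigl(\tfrac{C_1 p}{2sp-N}\, r^{\beta}\Bigr)\Bigl(-C_1 r^{-1+\beta}+C_1 r^{-1+\beta}\Bigr)=0
\end{equation*}
almost everywhere in $(0,R_0)$, so $g$ is non-decreasing on $(0,R_0)$.

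Next, I would observe that $g$ is bounded: from \eqref{Nmagg} we get $g(r)>0$, whereas \eqref{Nlimitdallalto} together with the boundedness of the exponential factor on $(0,R_0)$ gives a uniform upper bound. A bounded monotone function on $(0,R_0)$ admits a (finite) limit as $r\to 0^+$, so $\ell:=\lim_{r\to 0^+}g(r)$ exists in $\mathbb R$. Since $\beta>0$, we have $r^{\beta}\to 0$ as $r\to 0^+$, hence $\exp\!\bigl(\tfrac{C_1 p}{2sp-N}\, r^{\beta}\bigr)\to 1$, and therefore
\begin{equation*}
    \lim_{r\to 0^+}\mathcal N(r)=\ell-\tfrac{N-2s}{2}\in\mathbb R,
\end{equation*}
which is exactly the statement of the lemma.

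There is no real obstacle here: all the heavy lifting has been done in Lemmas \ref{lemmaN'} and the two a priori bounds \eqref{Nmagg}, \eqref{Nlimitdallalto}; the only point that requires a small check is that the integrating factor $\exp\!\bigl(\tfrac{C_1 p}{2sp-N}\, r^{\beta}\bigr)$ is bounded and tends to $1$ at the origin, which relies crucially on the subcritical integrability assumption $p>N/(2s)$ giving $\beta>0$.
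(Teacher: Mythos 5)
Your proof is correct, and it relies on exactly the same two a priori ingredients as the paper — the differential inequality \eqref{N'} and the two-sided bounds \eqref{Nmagg}, \eqref{Nlimitdallalto} — but it packages them differently. You use the classical Gronwall integrating-factor device: multiplying $\mathcal N+\frac{N-2s}{2}$ by $\exp\bigl(\tfrac{C_1 p}{2sp-N}\,r^\beta\bigr)$ turns \eqref{N'} into $g'\geq 0$ a.e., so the modified quantity is monotone, and a bounded monotone function automatically has a finite limit at $0^+$. The paper instead decomposes $\mathcal N'$ pointwise as a nonnegative term plus the $L^1(0,R_0)$ term $-C_1\,\tau^{-1+\beta}\bigl(\mathcal N+\tfrac{N-2s}{2}\bigr)$, writes $\mathcal N(r)$ via absolute continuity as $\mathcal N(R_0)$ minus the two corresponding integrals, and passes to the limit by monotone convergence in the first and dominated convergence in the second. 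The two routes are isomorphic in content; your version is a little more elementary in that it avoids invoking the convergence theorems and works directly with monotonicity. One small expository point: the inequality you display for $g'$ is not literally a factorization of $g'$ (only the lower bound for $\mathcal N'$ factors through $\bigl(\mathcal N+\tfrac{N-2s}{2}\bigr)e^{\cdots}$); the clean way to write it is $g'(r)=e^{\frac{C_1 p}{2sp-N}r^\beta}\bigl[\mathcal N'(r)+C_1 r^{-1+\beta}\bigl(\mathcal N(r)+\tfrac{N-2s}{2}\bigr)\bigr]\geq 0$, which is what you use anyway.
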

\begin{proof}
If we rewrite $\mathcal N'(\tau)$, for a.e. $\tau\in (0,R_0)$, as 
\begin{equation*}
\mathcal N'(\tau)= \left[\mathcal N'(\tau)+C_1 \,\tau^{-1+\frac{2sp-N}{p}} \left(\mathcal N(\tau)+\frac{N-2s}{2}\right)\right] -C_1 \,\tau^{-1+\frac{2sp-N}{p}} \left(\mathcal N(\tau)+\frac{N-2s}{2}\right) ,
\end{equation*}
we observe that it is the sum of a nonnegative function, by Lemma \ref{lemmaN'}, and of a $L^1(0,R_0)$-function, by \eqref{Nlimitdallalto} and the fact that $p>\frac{N}{2s}$. Therefore, from the absolute continuity of $\mathcal N$ it follows that,  for any $r\in (0,R_0)$,
\begin{equation*}
\begin{split}
\mathcal N(r)=& \ \mathcal N(R_0) - \int_r^{R_0} \mathcal N'(\tau
)\, d\tau \\
=&\  \mathcal N(R_0) - \int_0^{R_0} \chi_{(r,R_0)}(\tau)\left[\mathcal N'(\tau)+C_1 \,\tau^{-1+\frac{2sp-N}{p}} \left(\mathcal N(\tau)+\frac{N-2s}{2}\right)\right]\, d\tau \\
&+ \int_0^{R_0} \chi_{(r,R_0)}(\tau) C_1 \,\tau^{-1+\frac{2sp-N}{p}} \left(\mathcal N(\tau)+\frac{N-2s}{2}\right) \, d\tau.
\end{split}
\end{equation*}
Hence,  the existence of the limit  $\lim_{r\to0^+}\mathcal N(r)$ follows from the monotone convergence theorem and the Lebesgue dominated convergence theorem. 
Furthermore, such a limit is finite as a consequence of \eqref{Nmagg} and \eqref{Nlimitdallalto}. 
\end{proof}
From now on, we will denote
\begin{equation}\label{illimitegamma}
\gamma:=\lim_{r\to 0^+}\mathcal N(r).
\end{equation}
In the following lemma we exhibit two inequalities, providing an upper and lower bound estimate for the order of magnitude of $H(r)$ as $r\to0^+$.
\begin{lemma}\label{lemmaHr2gamma}
There exists a constant $C_3>0$ such that 
\begin{equation}\label{prima}
\frac{H(r)}{r^{2\gamma}}\leq C_3\quad \text{for all $r\in (0,R_0)$}.
\end{equation}
Furthermore, for any $\varepsilon>0$ there exists a constant $C_4=C_4(\e)>0$ such that 
\begin{equation}\label{seconda}
\frac{H(r)}{r^{2\gamma+\varepsilon}}\geq C_4\quad \text{for all $r\in (0,R_0)$}.
\end{equation}
\end{lemma}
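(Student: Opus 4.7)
The starting point is identity \eqref{H'=D}, which can be rewritten as $(\log H)'(r)=2\mathcal N(r)/r$ for a.e. $r\in(0,R_0)$; both bounds reduce to estimating $\mathcal N(r)-\gamma$ near $0$ and integrating.

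For \eqref{prima}, the plan is to extract from Lemma~\ref{lemmaN'} a quantitative lower bound for $\mathcal{N}(r)$. Setting $\sigma:=(2sp-N)/p>0$ and $\widetilde{\mathcal N}(r):=\mathcal N(r)+\tfrac{N-2s}{2}>0$ (by \eqref{Nmagg}), estimate \eqref{N'} rewrites as $\widetilde{\mathcal N}'(r)+C_1 r^{-1+\sigma}\widetilde{\mathcal N}(r)\ge 0$, so that the function $r\mapsto \exp\!\bigl(\tfrac{C_1 p}{2sp-N}r^{\sigma}\bigr)\widetilde{\mathcal N}(r)$ is nondecreasing on $(0,R_0)$. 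Taking the monotone limit as $r\to 0^+$, whose value is $\gamma+\tfrac{N-2s}{2}$ by \eqref{illimitegamma}, yields
\begin{equation*}
\widetilde{\mathcal N}(r)\ge \Big(\gamma+\tfrac{N-2s}{2}\Big)\exp\!\Big(-\tfrac{C_1 p}{2sp-N}r^{\sigma}\Big),
\end{equation*}
and the elementary inequality $1-e^{-x}\le x$ gives $\mathcal N(r)\ge \gamma - C r^{\sigma}$ for some constant $C>0$ and all $r\in(0,R_0)$. Substituting this into $(\log(H(r)/r^{2\gamma}))'=2(\mathcal N(r)-\gamma)/r\ge -2Cr^{\sigma-1}$ and integrating over $(r,R_0)$, the right-hand side is integrable at $0$ since $\sigma>0$, which produces
\begin{equation*}
\log\!\Big(\tfrac{H(r)}{r^{2\gamma}}\Big)\le \log\!\Big(\tfrac{H(R_0)}{R_0^{2\gamma}}\Big)+\tfrac{2C}{\sigma}R_0^{\sigma},
\end{equation*}
and \eqref{prima} follows by exponentiation.

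For \eqref{seconda} the argument is softer: I would use only the existence of the limit $\gamma=\lim_{r\to0^+}\mathcal N(r)$ (Lemma~\ref{esistelimite}). Fix $\varepsilon>0$ and choose $r_\varepsilon\in(0,R_0)$ so that $\mathcal N(r)<\gamma+\varepsilon/2$ for all $r\in(0,r_\varepsilon)$. Then the function $\Phi_\varepsilon(r):=H(r)/r^{2\gamma+\varepsilon}$ satisfies
\begin{equation*}
(\log \Phi_\varepsilon)'(r)=\tfrac{2(\mathcal N(r)-\gamma)-\varepsilon}{r}<0 \quad \text{on }(0,r_\varepsilon),
\end{equation*}
so $\Phi_\varepsilon$ is nonincreasing on $(0,r_\varepsilon)$; therefore $\Phi_\varepsilon(r)\ge \Phi_\varepsilon(r_\varepsilon)>0$ for every $r\in(0,r_\varepsilon)$. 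On the complementary interval $[r_\varepsilon,R_0)$, $\Phi_\varepsilon$ is continuous and strictly positive by Lemma~\ref{Hpositiva}, hence bounded below by a positive constant. Taking the minimum of the two lower bounds supplies $C_4(\varepsilon)>0$ and proves \eqref{seconda}.

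The main technical point, and the only place where the hypothesis $p>N/(2s)$ is essentially used, is the first step: turning the differential inequality \eqref{N'} into the explicit rate $\mathcal N(r)-\gamma\ge -Cr^{\sigma}$; the rest of the proof is a straightforward application of the Almgren identity $(\log H)'=2\mathcal N/r$ combined with integration.
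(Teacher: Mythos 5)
Your proof is correct and follows essentially the same route as the paper: both derive the quantitative lower bound $\mathcal N(r)\geq\gamma-Cr^{(2sp-N)/p}$ from \eqref{N'} (you via a Gr\"onwall-type integrating factor plus $e^{-x}\geq 1-x$, the paper by bounding $\mathcal N'$ directly through \eqref{Nlimitdallalto} and integrating from $0$ to $r$), then integrate $(\log H)'=2\mathcal N/r$ over $(r,R_0)$ to obtain \eqref{prima}. For \eqref{seconda} your argument is precisely the one the paper sketches, namely using $\mathcal N<\gamma+\varepsilon/2$ near $0$ and the positivity and continuity of $H$ on $[r_\varepsilon,R_0]$.
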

\begin{proof}
In view of Lemma \ref{esistelimite} and letting $\gamma$ be as in \eqref{illimitegamma},
for all $\rho\in (0,R_0)$ we have
\begin{equation}\label{servedopo}
\mathcal N(\rho)-\gamma = \int_0^\rho \mathcal N'(\tau)\, d\tau.
\end{equation}
From this, \eqref{N'}, and \eqref{Nlimitdallalto}, we  deduce that, for all $\rho\in (0,R_0)$,
\begin{equation*}
\mathcal N(\rho)\geq \gamma - C_1 \frac{p}{2sp-N}\left(C_2+\frac{N-2s}{2}\right)
\rho^{\frac{2sp-N}{p}}.
\end{equation*}
Hence, in view of  \eqref{H'=D} and \eqref{N}, for a.e. $\rho\in (0,R_0)$ we have
\begin{equation}\label{aboveeq}
\frac{H'(\rho)}{H(\rho)} = \frac{2}{\rho}\mathcal N(\rho)\geq \frac{2}{\rho}\gamma- C_1'\left(C_2+\frac{N-2s}{2}\right)\rho^{-1+\frac{2sp-N}{p}},
\end{equation}
with $C_1'= \frac{2C_1p}{2sp-N}$. Integrating \eqref{aboveeq} with respect to $\rho$ over $(r,R_0)$ for any fixed $r\in (0,R_0)$, we obtain~\eqref{prima}. 

As for \eqref{seconda}, the proof is also based on the integration of the identity $\frac{H'}{H} = \frac{2}{\rho}\mathcal N$, after estimating $\mathcal N$ from above with $\gamma+\frac\e2$ is a neighbourhood of $0$, as made possible by Lemma \ref{esistelimite} and  \eqref{illimitegamma}.
\end{proof}
\begin{lemma}
The limit
\begin{equation}\label{limdaprovmagdizero}
\lim_{r\to 0^+} \frac{H(r)}{r^{2\gamma}}
\end{equation}
does exist and is finite.
\end{lemma}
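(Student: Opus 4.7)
The plan is to show that the quantity $\rho \mapsto \log(H(\rho)/\rho^{2\gamma})$ can be written as a monotone function plus an error that vanishes at $0$, which forces existence of the limit, and then to conclude finiteness via the already-established upper bound \eqref{prima}.

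First I would differentiate $\log(H(\rho)/\rho^{2\gamma})$ on $(0,R_0)$. Using \eqref{H'=D} together with \eqref{N}, a direct computation gives
\begin{equation*}
\frac{d}{d\rho}\log\frac{H(\rho)}{\rho^{2\gamma}}=\frac{H'(\rho)}{H(\rho)}-\frac{2\gamma}{\rho}=\frac{2(\mathcal N(\rho)-\gamma)}{\rho}
\end{equation*}
for a.e.\ $\rho\in(0,R_0)$. The main ingredient is then the one-sided estimate on $\mathcal N(\rho)$ obtained inside the proof of Lemma \ref{lemmaHr2gamma}: combining \eqref{servedopo} with \eqref{N'} and \eqref{Nlimitdallalto} yields a constant $C>0$ such that
\begin{equation*}
\mathcal N(\rho)-\gamma\geq -C\,\rho^{\frac{2sp-N}{p}}\quad\text{for every }\rho\in(0,R_0).
\end{equation*}
Plugging this into the logarithmic derivative gives
\begin{equation*}
\frac{d}{d\rho}\log\frac{H(\rho)}{\rho^{2\gamma}}\geq -2C\,\rho^{-1+\frac{2sp-N}{p}},
\end{equation*}
and the right-hand side is locally integrable at $0$ precisely because $p>\frac{N}{2s}$.

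Next I would introduce the auxiliary function
\begin{equation*}
G(\rho):=\log\frac{H(\rho)}{\rho^{2\gamma}}+\frac{2Cp}{2sp-N}\,\rho^{\frac{2sp-N}{p}},\qquad\rho\in(0,R_0).
\end{equation*}
By the previous inequality $G'\geq 0$ a.e., so $G$ is nondecreasing on $(0,R_0)$. Consequently the limit $\ell:=\lim_{\rho\to 0^+}G(\rho)$ exists in $[-\infty,+\infty)$. Since the corrective term $\frac{2Cp}{2sp-N}\rho^{(2sp-N)/p}$ tends to $0$ as $\rho\to 0^+$, it follows that $\lim_{\rho\to 0^+}\log\frac{H(\rho)}{\rho^{2\gamma}}=\ell$, and therefore $\lim_{\rho\to 0^+}\frac{H(\rho)}{\rho^{2\gamma}}=e^{\ell}\in[0,+\infty)$ exists. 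Finally, the upper bound \eqref{prima} forces $e^{\ell}\leq C_3<+\infty$, so the limit \eqref{limdaprovmagdizero} is finite, as claimed.

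The only subtle point is that $G$ being nondecreasing and bounded above alone does not yet give finiteness of the limit of $H(\rho)/\rho^{2\gamma}$ — it only guarantees its existence in $[0,+\infty)$; finiteness is imported \emph{a posteriori} from Lemma \ref{lemmaHr2gamma}\eqref{prima}. No lower bound for the limit is claimed, and indeed at this stage one cannot exclude that $e^\ell=0$; this will be ruled out later in the blow-up analysis.
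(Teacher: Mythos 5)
Your proof is correct and takes a genuinely different, cleaner route than the paper. The paper differentiates $H(\rho)/\rho^{2\gamma}$ directly, which produces a somewhat unwieldy expression involving $\frac{2H(\rho)}{\rho^{2\gamma+1}}\int_0^\rho\mathcal N'(\tau)\,d\tau$; it then splits the inner integrand into $\mathcal N'(\tau)+C_2'\tau^{-1+\frac{2sp-N}{p}}$ (nonnegative, handled by monotone convergence after integrating in $\rho$) and the remainder (handled by dominated convergence and \eqref{prima}). You instead pass to the logarithmic derivative, which via \eqref{H'=D} collapses immediately to $\frac{2(\mathcal N(\rho)-\gamma)}{\rho}$, so the relevant one-sided estimate on $\mathcal N-\gamma$ can be applied without any inner integral; adding an explicit antiderivative of $-2C\rho^{-1+\frac{2sp-N}{p}}$ then produces a single monotone auxiliary function. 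The underlying ingredients — \eqref{H'=D}, \eqref{N'}, \eqref{Nlimitdallalto}, \eqref{servedopo}, and the integrability of $\rho^{-1+\frac{2sp-N}{p}}$ at $0$ — are the same, but your bookkeeping is lighter.

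One small remark on your closing paragraph: it is not quite right that finiteness has to be imported from \eqref{prima}. Once $G$ is shown nondecreasing, for any fixed $\rho_0\in(0,R_0)$ you have $G(\rho)\le G(\rho_0)<+\infty$ for $\rho\in(0,\rho_0)$, so $\ell=\lim_{\rho\to0^+}G(\rho)\in[-\infty,G(\rho_0)]$ and therefore $e^\ell\in[0,+\infty)$ automatically. Thus the monotonicity argument already yields finiteness; \eqref{prima} gives a quantitative upper bound but is not logically required for this lemma. (You do, of course, correctly note that strict positivity of the limit is a separate matter, settled later in the blow-up analysis.)
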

\begin{proof}
In light of \eqref{prima}, we only have to show that the limit does exist. By \eqref{H'=D}, \eqref{N} and \eqref{servedopo}, for a.e. $\rho\in (0,R_0)$  we have 
\begin{equation*}
\begin{split}
\frac{d}{d\rho}\frac{H(\rho)}{\rho^{2\gamma}}=& \frac{H'(\rho)}{\rho^{2\gamma}}-2\gamma\frac{H(\rho)}{\rho^{2\gamma+1}}= \frac{2(D(\rho)-\gamma H(\rho))}{\rho^{2\gamma+1}}= \frac{2H(\rho)}{\rho^{2\gamma+1}}\int_0^\rho \mathcal N'(\tau)\, d\tau\\
=& \frac{2H(\rho)}{\rho^{2\gamma+1}} \int_0^\rho 
\left(\mathcal N'(\tau)+C_2'\tau^{-1+\frac{2sp-N}{p}} \right)\,d\tau- \frac{2 C_2'H(\rho)}{\rho^{2\gamma+1}} \int_0^\rho\tau^{-1+\frac{2sp-N}{p}} \, d\tau.
\end{split}
\end{equation*}
where $C_2'=C_1\left(C_2+\frac{N-2s}{2}\right)$.
Integrating the above identity with respect to $\rho\in (r,R_0) $ for any fixed $r\in (0,R_0)$, we obtain 
\begin{align*}
\frac{H(R_0)}{R_0^{2\gamma}}- \frac{H(r)}{r^{2\gamma}}=2\int_r^{R_0}& \frac{H(\rho)}{\rho^{2\gamma+1}} \int_0^\rho 
\left(\mathcal N'(\tau)+C_2'\tau^{-1+\frac{2sp-N}{p}} \right)d\tau\, d\rho- \frac{2 C_2' p}{2sp-N} \int_r^{R_0} \frac{H(\rho)}{\rho^{2\gamma}} \rho^{-1+\frac{2sp-N}{p}}\, d\rho.  
\end{align*}
The first term on the right hand side of the above identity  has a limit as $r\to 0^+$ by the monotone convergence theorem, since the integrand is nonnegative as a consequence of Lemma \ref{Hpositiva}, \eqref{N'} and \eqref{Nlimitdallalto}. The second term admits a finite limit as $r\to 0^+$ by the Lebesgue dominated convergence theorem and \eqref{prima}. 
\end{proof}

\subsection{Blow-up analysis}

In this section, we perform a blow-up analysis at a given conical boundary point (fixed at the origin, without loss of generality). 
Given a non-trivial weak solution $U\in H^1(B^+_1, t^{1-2s})$ to \eqref{eq1EXT}, we are interested in the limiting behaviour, as $\tau \to 0^+$, of the following blow-up family 
\begin{equation}\label{wlambda}
w^\tau(z):= \frac{U(\tau z)}{\sqrt{H(\tau)}},\qquad \tau\in (0,1).
\end{equation}
Applying the change of variable $z\mapsto \tau z$, one may immediately observe that 
\begin{equation}\label{normalizzaz}
\int_{\partial ^+ B^+_1}t^{1-2s}|w^\tau|^2\, dS=1.
\end{equation}
Moreover, for any  $\tau\in (0,1)$, $w^\tau\in H^1(B^+_{1/\tau}, t^{1-2s})$ is a weak solution to 
\begin{equation}\label{eqwlambda}
\begin{cases}
\mathrm{div}(t^{1-2s}\nabla w^\tau)=0 &\mathrm{in} \ B_{1/\tau}^+\\
-\lim_{t\to0^+}t^{1-2s}\partial_t w^\tau=\kappa_s\left( \tau ^{2s}h(\tau x) + \frac{\lambda}{|x|^{2s}}\right) \mathop{\rm Tr}w^\tau &\mathrm{on} \ \mathcal C\cap B'_{1/\tau}\\
w^\tau=0 &\mathrm{on} \ B'_{1/\tau}\setminus \mathcal C,
\end{cases}
\end{equation}
i.e.
\begin{equation*}
\int_{B^+_{1/\tau}} t^{1-2s} \nabla w^\tau\cdot\nabla \psi\, dz = \kappa_s\int_{\mathcal C\cap B'_{1/\tau}} \left(\tau^{2s}h(\tau x)+ \frac{\lambda}{|x|^{2s}}\right) \mathop{\rm Tr}w^\tau \mathop{\rm Tr}\psi\, dx
\end{equation*}
for every $\psi\in C^\infty_c(B^+_{1/\tau}\cup (\mathcal C\cap B'_{1/\tau}))$.

\begin{lemma}
Let $R_0$ be as in \eqref{R0} and 
$w^\tau$ be defined in \eqref{wlambda}, for every $\tau\in (0,R_0)$.
 Then there exists a positive constant $M>0$ such that 
\begin{equation}\label{wtaulimitata}
\Vert w^\tau\Vert_{H^1(B^+_1, t^{1-2s})}\leq M\quad \text{for every $\tau\in (0,R_0)$}.
\end{equation}
\end{lemma}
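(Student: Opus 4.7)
The plan is to estimate separately the Dirichlet part and the $L^2$ part of the norm
$\|w^\tau\|_{H^1(B_1^+,t^{1-2s})}^2 = \int_{B_1^+}t^{1-2s}|\nabla w^\tau|^2\,dz + \int_{B_1^+}t^{1-2s}(w^\tau)^2\,dz$,
exploiting in each case the uniform information on the Almgren frequency already established.

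For the Dirichlet piece, I would first apply the change of variables $y=\tau z$ to obtain
\begin{equation*}
\int_{B_1^+}t^{1-2s}|\nabla w^\tau|^2\,dz = \frac{\tau^{2s-N}}{H(\tau)}\int_{B_\tau^+}t^{1-2s}|\nabla U|^2\,dy,
\end{equation*}
and then apply the coercivity estimate of Lemma~\ref{lemusefulineq} with $V=U$ and $f=h$ on $B_\tau^+$; this is admissible because $\tau<R_0=r_{\alpha_0}$ and $\|h\|_{L^p(\mathcal{C}\cap B_1')}\leq \alpha_0$, since $h_n\to h$ in $W^{1,p}$ by \eqref{lehnconverg}. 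Multiplying the resulting inequality by $\tau^{2s-N}$ and using the identity $\tau^{2s-N-1}\int_{\partial^+B_\tau^+}t^{1-2s}U^2\,dS=H(\tau)$, I expect to get
\begin{equation*}
\tilde C\,\tau^{2s-N}\int_{B_\tau^+}t^{1-2s}|\nabla U|^2\,dy \;\leq\; D(\tau)+(1-\tilde C)\tfrac{N-2s}{2}H(\tau).
\end{equation*}
Dividing by $H(\tau)$ and using \eqref{Nlimitdallalto}, this yields a bound on $\int_{B_1^+}t^{1-2s}|\nabla w^\tau|^2\,dz$ by $\tilde C^{-1}\bigl(C_2+(1-\tilde C)\tfrac{N-2s}{2}\bigr)$, uniformly in $\tau\in(0,R_0)$.

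For the $L^2$ piece, I would write, via the coarea formula and a spherical change of variables on each level set,
\begin{equation*}
\int_{B_1^+}t^{1-2s}(w^\tau)^2\,dz = \int_0^1\int_{\partial^+B_r^+}t^{1-2s}(w^\tau)^2\,dS\,dr = \int_0^1 r^{N+1-2s}\,\frac{H(\tau r)}{H(\tau)}\,dr.
\end{equation*}
The task thus reduces to controlling $H(\tau r)/H(\tau)$ for $r\in(0,1)$ and $\tau\in(0,R_0)$. Combining \eqref{H'=D} with the lower frequency bound \eqref{Nmagg} gives $H'(\rho)/H(\rho)=2\mathcal{N}(\rho)/\rho \geq -(N-2s)/\rho$ a.e.\ in $(0,R_0)$, and integrating on $(\tau r,\tau)$ produces
\begin{equation*}
\frac{H(\tau r)}{H(\tau)} \leq r^{2s-N}.
\end{equation*}
Since $2s-N<0$, plugging back yields $\int_{B_1^+}t^{1-2s}(w^\tau)^2\,dz \leq \int_0^1 r\,dr =\tfrac12$, uniformly in $\tau\in(0,R_0)$.

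Putting the two estimates together gives the desired uniform bound \eqref{wtaulimitata}, with $M$ depending only on $N$, $s$, $h$, $\lambda$, and $\mathcal{C}$. The only delicate point is ensuring the applicability of Lemma~\ref{lemusefulineq} at every scale $\tau\in(0,R_0]$, but this is precisely the reason for the choice of $R_0$ in \eqref{R0}; no blow-up-type compactness or refined doubling of $H$ is needed, since the crude bound from the lower frequency estimate already makes the spherical integral convergent.
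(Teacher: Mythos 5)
Your estimate of the Dirichlet part is exactly the paper's argument: rescale, invoke Lemma~\ref{lemusefulineq} at scale $\tau\le R_0$, divide by $H(\tau)$ and close the estimate with the upper bound $\mathcal N(\tau)\le C_2$ from \eqref{Nlimitdallalto}. Where you diverge is the $L^2$ part. The paper is terse here: it simply cites \cite{FalFel14}*{Lemma~2.4} (a Hardy-type inequality controlling $\int_{B_1^+}t^{1-2s}v^2/|z|^2\,dz$, hence the plain $L^2$ norm, by the gradient seminorm plus the spherical trace term), and couples it with the normalization \eqref{normalizzaz}. You instead compute the $L^2$ integral by coarea, rewrite $\int_{\partial^+B_r^+}t^{1-2s}(w^\tau)^2\,dS$ as $r^{N+1-2s}H(\tau r)/H(\tau)$, and then feed in the one-sided doubling bound $H(\tau r)/H(\tau)\le r^{2s-N}$ obtained from \eqref{H'=D} and the lower frequency bound \eqref{Nmagg}; the product is $r$, integrable on $(0,1)$. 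This is correct (the integration of $\log H$ is legitimate because $H>0$ on $(0,R_0]$ by Lemma~\ref{Hpositiva} and $H\in W^{1,1}_{\rm loc}$ by Lemma~\ref{properties}), and it is self-contained in the sense that it reuses only the frequency machinery already established, rather than importing an auxiliary Poincar\'e--Hardy lemma from another paper. What it costs is that it redoes part of what Lemma~\ref{l:doub} will later re-prove; the paper's route is shorter at this point precisely because the external Lemma~2.4 absorbs the work. One small inaccuracy worth flagging: the constant $M$ you produce is not independent of $U$, since $C_2$ carries $\mathcal N(R_0)$ and hence the solution itself --- but the statement does not claim such independence, so this does not affect correctness.
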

\begin{proof}
By Lemma \ref{lemusefulineq} and the change of variable $z\mapsto\tau z $, we have 
\begin{equation*}
\begin{split}
\int_{B^+_\tau} t^{1-2s} |\nabla U|^2\, dz -\kappa_s\int_{\mathcal{C}\cap B'_\tau}&\left(h+\frac{\lambda}{|x|^{2s}}\right) |\mathop{\rm Tr}U|^2\, dx\geq 
  -\frac{N-2s}{2}\tau^{N-2s}H(\tau) \int_{\partial ^+ B^+_1} t^{1-2s} |w^\tau|^2\, dS \\
 &+ \tau^{N-2s}H(\tau)\tilde{C}\left(\int_{B^+_1}|\nabla w^\tau|^2 \, dz + \frac{N-2s}{2} \int_{\partial^+ B^+_1} t^{1-2s} |w^\tau|^2\, dS\right)
\end{split}
\end{equation*}
for every $\tau\in (0,R_0)$.
Dividing by $\tau^{N-2s}H(\tau)$ and taking into account \eqref{normalizzaz}, we deduce that 
\begin{equation*}
\mathcal N(\tau)\geq \tilde{C}\int_{B^+_1}|\nabla w^\tau|^2 \, dz + \frac{N-2s}{2}(\tilde{C}-1).
\end{equation*}
The latter, combined with \eqref{Nlimitdallalto}, allows us to conclude that $\Vert \nabla w^\tau\Vert_{L^2(B^+_1,t^{1-2s})}$ is bounded uniformly with respect to $\tau\in (0,R_0)$. From this and \cite{FalFel14}*{Lemma 2.4}, we get \eqref{wtaulimitata}. 
\end{proof}
The next lemma provides some crucial doubling-type properties.   
\begin{lemma}\label{l:doub}
There exists a constant $C_5>0$ such that, for every $\tau\in (0,R_0/2)$ and every $R\in [1,2]$,
\begin{equation}\label{doubH}
\frac{1}{C_5}\leq \frac{H(R\tau)}{H(\tau)}\leq C_5,
\end{equation}
\begin{equation}\label{stimawtau}
\int_{B^+_R } t^{1-2s}|w^\tau|^2\,dz\leq C_5 2^{N+2-2s} \int_{B^+_1} t^{1-2s}|w^{R\tau}| ^2\,dz
\end{equation}
and
\begin{equation}\label{stimanablawtau}
\int_{B^+_R}t^{1-2s} |\nabla w^\tau| ^2\,dz\leq C_5 2^{N-2s} \int_{B^+_1}t^{1-2s}|\nabla w^{R\tau}| ^2\,dz.
\end{equation}

\end{lemma}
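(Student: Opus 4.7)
The plan is to derive \eqref{doubH} from the logarithmic derivative identity \eqref{H'=D} together with the uniform bounds on $\mathcal N$, and then to deduce \eqref{stimawtau} and \eqref{stimanablawtau} via a change of variables combined with \eqref{doubH}.

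First I would observe that, by \eqref{H'=D} and \eqref{N}, one has $\frac{d}{d\rho}\log H(\rho)=\frac{2\mathcal N(\rho)}{\rho}$ for a.e. $\rho\in(0,R_0)$, an identity that makes sense thanks to Lemma \ref{Hpositiva}. Integrating this between $\tau$ and $R\tau$ for $\tau\in(0,R_0/2)$ and $R\in[1,2]$ yields
\[
\log\frac{H(R\tau)}{H(\tau)}=2\int_\tau^{R\tau}\frac{\mathcal N(\rho)}{\rho}\,d\rho.
\]
Combining the upper bound \eqref{Nlimitdallalto} with the lower bound \eqref{Nmagg}, one has $|\mathcal N(\rho)|\leq A$ for some constant $A>0$ depending only on $N$, $s$, $h$, $\lambda$, $\mathcal C$ and valid for all $\rho\in(0,R_0)$; hence the integral above is controlled in absolute value by $2A\log R\leq 2A\log 2$. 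Exponentiating then gives \eqref{doubH} with a suitable $C_5$.

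For the remaining two inequalities, the key ingredient is the scaling relation
\[
w^\tau(Ry)=\sqrt{\tfrac{H(R\tau)}{H(\tau)}}\;w^{R\tau}(y),
\]
which follows directly from definition \eqref{wlambda} of $w^\tau$. I would then change variables $z=Ry$ in the integrals over $B_R^+$: the Jacobian contributes $R^{N+1}$ and the weight rescales as $t^{1-2s}=R^{1-2s}y_t^{1-2s}$, so that
\[
\int_{B^+_R}t^{1-2s}|w^\tau|^2\,dz=R^{N+2-2s}\,\frac{H(R\tau)}{H(\tau)}\int_{B^+_1}t^{1-2s}|w^{R\tau}|^2\,dz,
\]
and an analogous identity for gradients, where the chain rule $\nabla_z(w^\tau(Ry))=R\,\nabla w^\tau(Ry)$ introduces an extra factor $R^{-2}$, yielding $R^{N-2s}$ instead of $R^{N+2-2s}$. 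Estimating $R\leq 2$ and invoking \eqref{doubH} then produces \eqref{stimawtau} and \eqref{stimanablawtau}.

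No serious obstacle is expected: the argument is essentially bookkeeping once \eqref{H'=D}, \eqref{Nlimitdallalto}, and \eqref{Nmagg} are in hand. The only point requiring a little care is correctly tracking the homogeneity factors in $R$ in the gradient estimate, which accounts for the different explicit constants $2^{N+2-2s}$ and $2^{N-2s}$ in the two conclusions.
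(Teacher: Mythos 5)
Your proof is correct and follows the same strategy as the paper: integrating the identity $H'/H = 2\mathcal N/\rho$ (coming from \eqref{H'=D} and \eqref{N}) over $(\tau, R\tau)$, invoking \eqref{Nmagg}--\eqref{Nlimitdallalto} to bound $\mathcal N$, and then using the scaling relation $w^\tau(Ry)=\sqrt{H(R\tau)/H(\tau)}\,w^{R\tau}(y)$ together with \eqref{doubH} to obtain the remaining two estimates. The only cosmetic difference is that you bound $|\mathcal N|$ symmetrically rather than using the asymmetric bounds $-\tfrac{N-2s}{2}\le\mathcal N\le C_2$ directly, which is equally valid.
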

\begin{proof}
Putting together  \eqref{N}, \eqref{Nmagg}, \eqref{H'=D}, and \eqref{Nlimitdallalto}, 
we deduce that, for a.e. $r\in (0,R_0)$,
\begin{equation*}
-\frac{N-2s}{r}\leq \frac{H'(r)}{H(r)}\leq \frac{2C_2}{r}. 
\end{equation*}
Integrating the above inequality over $(\tau, R\tau)$, with $1<R\leq 2$ and $\tau \in (0,R_0/R)$, we obtain 
\begin{equation*}\label{ok}
2^{2s-N}\leq \frac{H(R\tau)}{H(\tau)}\leq 4^{C_2}\quad \text{for any $1<R\leq 2$ and $\tau\in (0, R_0/R)$},
\end{equation*}
that implies \eqref{doubH}, which is trivially satisfied if $R=1$. As for the proof of \eqref{stimawtau}, we notice that, applying first the change of variable $z\mapsto \tau z$ and then $z\mapsto (R\tau)^{-1} z$, one has
\begin{equation*}
\int_{B^+_R} t^{1-2s} |w^\tau|^2\, dz = \frac{\tau^{2s-2-N}}{H(\tau)}\int_{B^+_{R\tau}} t^{1-2s}U^2\,dz= R^{N-2s+2}\frac{H(R\tau)}{H(\tau)}\int_{B^+_1} t^{1-2s} |w^{R\tau}|^2\, dz
\end{equation*}
for every $\tau\in (0,R_0/2)$ and  $R\in [1,2]$.
This, combined with \eqref{doubH}, leads to \eqref{stimawtau}. 
The proof of  \eqref{stimanablawtau} is analogous.
\end{proof}
The following result provides an estimate of the boundary energy for a selection of radii.
\begin{lemma}\label{lemmastimadivol}
There exist $\overline{M}>0$ and $\tau_0\in (0,R_0/2)$ such that, for every $\tau\in (0,\tau_0)$, there exists $R_\tau\in [1,2]$ such that 
\begin{equation}\label{stimadivolume}
\int_{\partial^+ B^+_{R_\tau}} t^{1-2s}|\nabla w^\tau|^2\,dS\leq \overline{M} \int_{B^+_{R_\tau}} t^{1-2s}|\nabla w^\tau|^2\,dz.
\end{equation}
\end{lemma}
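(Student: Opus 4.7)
The plan is to argue by contradiction via a Gronwall-type averaging scheme. Setting $g(R):=\int_{B^+_R} t^{1-2s}|\nabla w^\tau|^2\,dz$, the coarea formula shows that $g$ is absolutely continuous on $(0,1/\tau)$ with $g'(R)=\int_{\partial^+ B^+_R} t^{1-2s}|\nabla w^\tau|^2\,dS$ for a.e. $R$. The conclusion \eqref{stimadivolume} for some $R_\tau \in [1,2]$ is then equivalent to saying that $g'(R_\tau) \leq \overline M\, g(R_\tau)$ for some $R_\tau \in [1,2]$. If this were to fail for every $R \in [1,2]$, then $g'(R) > \overline M\, g(R)$ a.e. on $[1,2]$, and Gronwall's inequality would yield $g(2) > g(1)\, e^{\overline M}$. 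So it is enough to produce a uniform upper bound for $g(2)$ and a uniform strictly positive lower bound for $g(1)$ as $\tau$ ranges in $(0,\tau_0)$ for some $\tau_0\in(0,R_0/2)$, and then to choose $\overline M$ large enough.

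The upper bound on $g(2)$ is immediate from \eqref{stimanablawtau} with $R=2$ combined with \eqref{wtaulimitata}: indeed $g(2) \leq C_5\, 2^{N-2s}\int_{B^+_1}t^{1-2s}|\nabla w^{2\tau}|^2\,dz \leq C_5\, 2^{N-2s}\, M^2$ for every $\tau \in (0,R_0/2)$. For the lower bound on $g(1)$ I would argue by contradiction: suppose there exists $\tau_n \to 0^+$ with $g(1) \to 0$. By \eqref{wtaulimitata} a subsequence satisfies $w^{\tau_n} \rightharpoonup W$ weakly in $H^1(B^+_1,t^{1-2s})$; using the compactness of the trace operators $H^1(B^+_1,t^{1-2s}) \to L^2(\partial^+ B^+_1, t^{1-2s})$ and $H^1(B^+_1,t^{1-2s}) \to L^2(B'_1)$ (the latter via $H^s(B'_1)$ and the compact embedding $H^s(B'_1)\hookrightarrow L^2(B'_1)$, exactly as in Remark \ref{rem:compact-trace}), the boundary traces converge strongly. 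The vanishing of $\|\nabla w^{\tau_n}\|_{L^2(B^+_1,t^{1-2s})}^2$ forces $\nabla W \equiv 0$ by weak lower semicontinuity, so $W$ is constant on $B^+_1$. Since $w^{\tau_n}\equiv 0$ on $B'_{1/\tau_n}\setminus \mathcal C \supset B'_1\setminus \mathcal C$, strong $L^2(B'_1)$-convergence of the traces yields $\mathop{\rm Tr}W \equiv 0$ on $B'_1\setminus\mathcal C$; this is a set of positive measure because $\mathcal C\neq \R^N$, and combined with $W$ being constant this forces $W\equiv 0$. On the other hand, the normalization \eqref{normalizzaz} together with strong convergence on $\partial^+ B^+_1$ gives $\int_{\partial^+ B^+_1}t^{1-2s}W^2\,dS=1$, a contradiction.

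Hence there exist $c_0>0$ and $\tau_0 \in (0,R_0/2)$ with $g(1)\geq c_0$ for all $\tau \in (0,\tau_0)$. Choosing $\overline M$ so large that $c_0\, e^{\overline M} > C_5\, 2^{N-2s}\, M^2$ closes the contradiction in the Gronwall step. The main obstacle is exactly the uniform lower bound on $g(1)$: since the Almgren limit $\gamma$ could a priori be as low as $-(N-2s)/2$, a direct energy estimate via Lemma \ref{lemusefulineq} is not enough, and one instead exploits the geometric fact that $w^\tau$ must have vanishing trace on the positive-measure set $B'_1\setminus\mathcal C$ — a property specific to the conical (indeed, non-full-space) setting, which rules out a nonzero constant as a possible weak limit.
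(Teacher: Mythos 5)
Your proof is correct and follows essentially the same strategy as the paper's: reformulate via coarea/Gronwall, bound $g(2)$ uniformly through the doubling estimate \eqref{stimanablawtau} and \eqref{wtaulimitata}, and rule out $g(1)\to 0$ by showing the weak limit would have to be a nonzero constant with vanishing trace on the positive-measure set $B'_1\setminus\mathcal C$. The only cosmetic difference is that you establish the vanishing trace on $B'_1\setminus\mathcal C$ via compactness of the trace map while the paper invokes weak closedness of the constrained subspace; both are valid and the mathematical content is identical.
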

\begin{proof}
From  \eqref{wtaulimitata} and \eqref{stimawtau}--\eqref{stimanablawtau} with $R=2$, it follows that 
\begin{equation}\label{limitatainB2}
\{w^\tau\}_{\tau\in (0,R_0/2)} \quad \text{is bounded in $H^1(B^+_2, t^{1-2s})$}.
\end{equation}
We also observe that, for every fixed $\tau \in (0,R_0/2)$, the function $$f_\tau: r\mapsto \int_{B^+_r}t^{1-2s} |\nabla w^\tau|^2\, dz$$ is absolutely continuous in $[0,2]$ with distributional derivative given by 
\begin{equation*}
f'_\tau(r)=\int_{\partial^+B^+_r} t^{1-2s} |\nabla w^\tau|^2\, dS\quad\text{for a.e. }r\in(0,2).
\end{equation*} 
We argue by contradiction and  assume that, for any $\overline{M}>0$, there exists a sequence $\tau_n\to 0^+$ such that 
\begin{equation*}
f'_{\tau_n}(r) > \overline{M}f_{\tau_n}(r) 
\end{equation*}
for a.e. $r\in (1,2)$ and for every $n\in \mathbb{N}$. An integration of the last inequality leads to 
\begin{equation*}
f_{\tau_n}(2)>e^{\overline{M}} f_{\tau_n}(1) \quad \text{ for every $n\in \mathbb{N}$}.
\end{equation*}
Therefore 
\begin{equation}\label{in}
\liminf _ {\tau\to 0^+}f_\tau(1)\leq \liminf _{n\to \infty}f_{\tau_n}(1)\leq \limsup _{n\to \infty}f_{\tau_n}(1) \leq e^{-\overline{M}}  \limsup _{n\to \infty}f_{\tau_n}(2)\leq e^{-\overline{M}} \limsup _ {\tau\to 0^+}f_\tau(2). 
\end{equation}
We observe that $\limsup _ {\tau\to 0^+}f_\tau(2)<+\infty$ by \eqref{limitatainB2}; hence we can pass to the limit as  $\overline{M}\to +\infty$  in \eqref{in}, thus obtaining
$\liminf _ {\tau\to 0^+}f_\tau(1)=0$.  Hence there exists a sequence $\tilde{\tau}_n\to 0^+$ such that 
\begin{equation}\label{vanish}
\int_{B^+_1}t^{1-2s}|\nabla w^{\tilde{\tau}_n}|^2\, dz \to 0 \quad \text{as $n\to \infty$}.
\end{equation}
By \eqref{wtaulimitata}, up to consider a subsequence still denoted with $\tilde{\tau}_n$, there exists $w\in H^1(B^+_1,t^{1-2s})$ such that
\begin{equation}\label{deboleconv}
w^{\tilde{\tau}_n} \rightharpoonup w \quad \text{weakly in $H^1(B^+_1,t^{1-2s})$}.
\end{equation}
By \eqref{vanish},  \eqref{deboleconv}, and  weak lower semicontinuity of the norm, we  conclude that
\begin{equation}\label{cost}
\int_{B^+_1} t^{1-2s} |\nabla w|^2\, dz =0.
\end{equation} 
From \eqref{deboleconv}, \eqref{normalizzaz}, and 
the compactness of the trace map
\begin{equation}\label{compact}
\mathop{\rm tr}: H^1(B^+_1, t^{1-2s}) \to L^2(\partial^+B^+_1, t^{1-2s}),
\end{equation} it  follows that 
\begin{equation*}
\int_{\partial ^+ B^+_1}t^{1-2s}|w|^2\, dS=1
\end{equation*}
 This, together with \eqref{cost}, implies that 
\begin{equation}\label{dacontraddire}
w\equiv \mathrm{const}\neq 0 \quad \text{in $B^+_1$}.
\end{equation}
On the other hand, since the space of all functions in $ H^1(B^+_1,t^{1-2s})$ with null trace on $B'_1\setminus\Omega$ is weakly closed in $ H^1(B^+_1,t^{1-2s})$, from \eqref{eqwlambda} and \eqref{deboleconv} we deduce that $\mathop{\rm Tr}w=0$ on $B'_1\setminus\Omega$. 
This contradicts \eqref{dacontraddire}, thus completing the proof.
\end{proof}
Combining Lemmas \ref{l:doub} and \ref{lemmastimadivol}, we obtain the following uniform bound of energies on $\partial^+B_1^+$, for blow-up functions with scaling parameter of the form $\tau R_\tau$.
\begin{lemma}\label{l:boundbord}
There exists $\widetilde{M}>0$ such that, for every $\tau\in (0,\tau_0)$,
\begin{equation*}
\int_{\partial ^+ B^+_1} t^{1-2s}|\nabla w^{\tau R_\tau}|^2\,dS\leq \widetilde{M}. 
\end{equation*}
\end{lemma}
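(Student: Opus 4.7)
The plan is to transfer the volume--controlled boundary bound from Lemma~\ref{lemmastimadivol}, which is valid for $w^\tau$ at the radius $R_\tau\in[1,2]$, to a bound for $w^{\tau R_\tau}$ at radius $1$. The key algebraic fact, straight from the definition \eqref{wlambda}, is
\begin{equation*}
w^{\tau R_\tau}(z)=\sqrt{\tfrac{H(\tau)}{H(\tau R_\tau)}}\,w^\tau(R_\tau z),\qquad z\in B^+_{1/(\tau R_\tau)},
\end{equation*}
so that $\nabla_z w^{\tau R_\tau}(z)=R_\tau\sqrt{H(\tau)/H(\tau R_\tau)}\,\nabla_y w^\tau(R_\tau z)$. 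Performing the substitution $y=R_\tau z$ in the surface integral over $\partial^+B^+_1$ and tracking the Jacobian ($dS_y=R_\tau^N dS_z$) together with the weight ($t_z^{1-2s}=R_\tau^{2s-1}t_y^{1-2s}$) yields
\begin{equation*}
\int_{\partial^+B^+_1}t^{1-2s}|\nabla w^{\tau R_\tau}|^2\,dS
=R_\tau^{2s+1-N}\,\frac{H(\tau)}{H(\tau R_\tau)}\int_{\partial^+B^+_{R_\tau}}t^{1-2s}|\nabla w^\tau|^2\,dS.
\end{equation*}

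Since $R_\tau\in[1,2]$, the factor $R_\tau^{2s+1-N}$ is bounded by $\max\{1,2^{2s+1-N}\}$, and \eqref{doubH} in Lemma~\ref{l:doub} gives $H(\tau)/H(\tau R_\tau)\le C_5$. I then estimate the surface integral on the right-hand side using Lemma~\ref{lemmastimadivol}, namely
\begin{equation*}
\int_{\partial^+B^+_{R_\tau}}t^{1-2s}|\nabla w^\tau|^2\,dS\le \overline M\int_{B^+_{R_\tau}}t^{1-2s}|\nabla w^\tau|^2\,dz,
\end{equation*}
and convert the right-hand side back to the scale $\tau R_\tau$ via \eqref{stimanablawtau}:
\begin{equation*}
\int_{B^+_{R_\tau}}t^{1-2s}|\nabla w^\tau|^2\,dz\le C_5\,2^{N-2s}\int_{B^+_1}t^{1-2s}|\nabla w^{\tau R_\tau}|^2\,dz\le C_5\,2^{N-2s}M^2,
\end{equation*}
where the last inequality is the uniform bound \eqref{wtaulimitata}, which applies because $\tau R_\tau<2\tau_0<R_0$.

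Chaining the three estimates yields
\begin{equation*}
\int_{\partial^+B^+_1}t^{1-2s}|\nabla w^{\tau R_\tau}|^2\,dS\le \max\{1,2^{2s+1-N}\}\,C_5\cdot\overline M\cdot C_5\,2^{N-2s}M^2=:\widetilde M,
\end{equation*}
which is the desired uniform bound. There is no real obstacle here: the proof is a bookkeeping exercise combining the previously established doubling properties \eqref{doubH}--\eqref{stimanablawtau}, the energy-controlled boundary estimate of Lemma~\ref{lemmastimadivol}, and the uniform $H^1$--bound \eqref{wtaulimitata}; the only care required is in keeping track of the weight $t^{1-2s}$ and the surface Jacobian under the rescaling $y=R_\tau z$.
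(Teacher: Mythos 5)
Your proof is correct and follows the same route as the paper: rescale via $y=R_\tau z$ to transfer the boundary integral, apply the doubling bound \eqref{doubH}, the selection estimate \eqref{stimadivolume}, the scaling estimate \eqref{stimanablawtau}, and finally the uniform $H^1$ bound \eqref{wtaulimitata}. The only cosmetic difference is that you keep the factor $\max\{1,2^{2s+1-N}\}$ explicit where the paper simply absorbs $R_\tau^{1+2s-N}\le 2$ into the constant.
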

\begin{proof}
By the change of variable $z\mapsto R_\tau z$, from \eqref{doubH}, \eqref{stimadivolume}, and \eqref{stimanablawtau} it follows that 
\begin{equation*}
\begin{split}
\int_{\partial^+B^+_1}t^{1-2s}|\nabla w^{\tau R_\tau}|^2\, dS&\leq R_\tau^{1+2s-N}\frac{H(\tau)}{H(\tau R_\tau)} \int_{\partial ^+ B^+_{R_\tau}} t^{1-2s}|\nabla w^\tau|^2\, dS\\
&\leq 2C_5\overline{M} \int_{B^+_{R_\tau}}t^{1-2s} |\nabla w^\tau|^2\, dz\leq 2^{1+N-2s}C_5^2 \overline{M} \int_{B^+_1} t^{1-2s} |\nabla w^{\tau R_\tau}|^2\, dz.
\end{split}
\end{equation*}
The thesis then follows from \eqref{wtaulimitata}, since $\tau R_\tau <R_0$. 
\end{proof}
\begin{remark}\label{remark}
As a consequence of Lemma \ref{l:boundbord}, we have that, for any sequence $\tau_n\to 0^+$, there exist a subsequence $\{\tau_{n_k}\}_{k\in\mathbb{N}}$ and a function $\xi \in L^2(\partial ^+ B^+_1, t^{1-2s})$ such that 
\begin{equation*}
\frac{\partial w^{\tau_{n_k}R_{\tau_{n_k}}}}{\partial\nu } \rightharpoonup \xi \quad \text{weakly in $L^2(\partial ^+ B^+_1, t^{1-2s})$ as $k\to\infty$}. 
\end{equation*}   
\end{remark}
\begin{lemma}\label{lemmaconvblow}
Let $\gamma$ be defined in \eqref{illimitegamma}. Then, there exists an eigenvalue $\mu_{   j_0}$ of problem \eqref{probautov} such that
\begin{equation}\label{relazione}
\gamma= \sqrt{\left(\frac{N-2s}{2}\right)^2+\mu_{j_0} }-\frac{N-2s}{2}.
\end{equation} 
Furthermore, for every sequence $\tau_n\to 0^+$, there exist a subsequence $\{\tau_{n_k}\}_{k\in\mathbb{N}}$ and a $L^2(\mathbb{S}^N_+, \theta_{N+1}^{1-2s})$-normalized eigenfunction $\psi$ associated with $\mu_{j_0} $ such that 
\begin{equation*}
w^{\tau_{n_k}}(z) \to |z|^\gamma\psi\left(\frac{z}{|z|}\right)\quad \text{as $k\to\infty$ in $H^1(B^+_1, t^{1-2s})$}.
\end{equation*}
\end{lemma}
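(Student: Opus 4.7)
\medskip
\noindent\textbf{Proof proposal.}
The plan is a standard Almgren-type blow-up argument, adapted to the boundary-conical, fractional setting through the extension \eqref{eq1EXT}. First, I would use the uniform bound \eqref{wtaulimitata} to extract, from any given $\tau_n\to 0^+$, a subsequence $\{\tau_{n_k}\}$ and a function $W\in H^1(B_1^+,t^{1-2s})$ such that $w^{\tau_{n_k}}\rightharpoonup W$ weakly in $H^1(B_1^+,t^{1-2s})$. By the compactness of the trace map \eqref{compact} and the normalization \eqref{normalizzaz}, one gets $\int_{\partial^+ B_1^+}t^{1-2s}W^2\,dS=1$, so $W\not\equiv 0$; moreover, since $\{V\in H^1(B_1^+,t^{1-2s}):\mathop{\rm Tr}V\equiv 0\text{ on }B_1'\setminus\mathcal C\}$ is weakly closed, $W$ inherits the homogeneous Dirichlet condition outside the cone.

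Second, I would pass to the limit in the weak formulation of \eqref{eqwlambda} tested against an arbitrary $\psi\in C^\infty_c(B_1^+\cup(\mathcal C\cap B_1'))$. Using the rescaling $\tau^{2s}h(\tau x)$ together with $h\in W^{1,p}(\Omega)$, $p>N/(2s)$, the Hölder inequality and \eqref{eq:lemma2.6FF} give that the $h$-term is of order $O(\tau_{n_k}^{2s-N/p})$ and thus vanishes, while the Hardy term passes to the limit by \eqref{fract2} and weak convergence of traces in the weighted $L^2$ with weight $|x|^{-2s}$. Hence $W$ weakly solves the scale-invariant limit problem \eqref{eqCone} in $B_1^+$.

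Third, I would show that $W$ is $\gamma$-homogeneous. The natural route is to introduce the frequency function $\mathcal N_W$ associated with $W$ (formally dropping $h$ from \eqref{D}) and prove that $\mathcal N_W\equiv\gamma$ on $(0,1)$. The doubling estimates \eqref{doubH}--\eqref{stimanablawtau} together with Lemma~\ref{l:boundbord} and Remark~\ref{remark} allow one to strengthen the convergence of $w^{\tau_{n_k}R_{\tau_{n_k}}}$ in $L^2$ on spheres and of $\partial_\nu w^{\tau_{n_k}R_{\tau_{n_k}}}$ on $\partial^+B_1^+$, so that the limit inherits the value $\mathcal N_W(r)=\lim_{k}\mathcal N(r\tau_{n_k})=\gamma$ for a.e.\ $r\in(0,1)$. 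From \eqref{H'=D} applied to $W$, one then gets $H_W(r)=H_W(1)r^{2\gamma}$; the Pohozaev identity (the limiting form of \eqref{poho} for $h\equiv 0$, which becomes an equality by approximation) forces $\partial_\nu W = (\gamma/|z|)\,W$, hence $W(z)=|z|^\gamma\Psi(z/|z|)$ for some $\Psi$. Plugging this back into the equation solved by $W$, separation of variables shows that $\Psi$ solves \eqref{probautov} with eigenvalue $\mu=\gamma(N-2s+\gamma)$, proving \eqref{relazione} with $\mu_{j_0}=\mu$ and identifying $\psi$ as (a suitable rescaling of) $\Psi$.

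Finally, I would upgrade weak to strong convergence in $H^1(B_1^+,t^{1-2s})$ via an energy identity: testing \eqref{eqwlambda} with $w^{\tau_{n_k}}$ and the limit equation with $W$, and using the already-established convergence of traces, one obtains
\[
\int_{B_1^+}t^{1-2s}|\nabla w^{\tau_{n_k}}|^2\,dz\;\longrightarrow\;\int_{B_1^+}t^{1-2s}|\nabla W|^2\,dz,
\]
which, together with weak convergence, yields strong convergence of gradients; the $L^2$ part is handled by \cite{FalFel14}*{Lemma 2.4}. I expect the main technical obstacle to be step three: pushing the monotonicity identity down to the blow-up limit (in particular, justifying $\mathcal N_W\equiv\gamma$ and the resulting exact Pohozaev equality for $W$), since the frequency involves boundary traces on $\partial^+B_r^+$ and derivatives across these spheres, which require the sharper sphere-level convergence produced by Lemma~\ref{lemmastimadivol} and Lemma~\ref{l:boundbord} rather than the mere interior weak convergence.
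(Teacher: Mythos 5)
Your plan captures the right shape (extract a weak limit, pass to the limit in the rescaled equation, show the frequency of the limit is constantly $\gamma$, deduce homogeneity, finish by separation of variables), but the ordering of the steps contains a circular dependency that the paper carefully avoids, and it is not cosmetic.

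The crux is the boundary term on $\partial^+B_1^+$. To show $\mathcal N_W(r)=\lim_k\mathcal N(r\tau_{n_k})=\gamma$ (your Step~3) you need $D_k(r)\to D_W(r)$ for a.e.\ $r$, which requires \emph{strong} $L^2(B_r^+,t^{1-2s})$ convergence of $\nabla w^{\tau_{n_k}}$ — weak convergence gives you only lower semicontinuity, not the equality you need to transport the value $\gamma$. But your Step~4, where you claim strong convergence via an energy identity, is placed \emph{after} Step~3, and more importantly it needs to integrate by parts on $B_1^+$ and control the term $\int_{\partial^+B_1^+}t^{1-2s}\,\partial_\nu w^{\tau_{n_k}}\,w^{\tau_{n_k}}\,dS$. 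There is no a priori bound on $\partial_\nu w^{\tau_{n_k}}|_{\partial^+B_1^+}$ for the raw scaling parameter $\tau_{n_k}$: the uniform bound in Lemma~\ref{l:boundbord} (hence the weak $L^2$-compactness in Remark~\ref{remark}) is available only along the corrected parameters $\tau R_\tau$, which is the whole point of the radius selection in Lemma~\ref{lemmastimadivol}. You acknowledge this by invoking $w^{\tau_{n_k}R_{\tau_{n_k}}}$ inside Step~3, but those functions converge to a \emph{different} subsequential limit than the one you called $W$ in Step~1, so the argument does not close.

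The paper's proof resolves this by working with $w^{\tau R_\tau}$ throughout: it first proves \emph{strong} $H^1$-convergence of $w^{\tau_{n_k}R_{\tau_{n_k}}}$ (precisely because the boundary normal derivative is controlled for that family), \emph{then} deduces $\mathcal N_w\equiv\gamma$, homogeneity, and the eigenvalue relation, and only at the very end (Step~3 of the paper) transfers back from $w^{\tau R_\tau}$ to $w^\tau$ via the doubling estimate \eqref{doubH} and the limits $\ell=\lim H(\tau R_\tau)/H(\tau)$, $\overline R=\lim R_\tau$, together with the exact homogeneity $w(r\theta)=r^\gamma\psi(\theta)$ to normalize the scale factor $\overline R^{-\gamma}\sqrt{\ell}=1$. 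So the content you would need to add to make your proposal correct is: replace $w^{\tau_{n_k}}$ by $w^{\tau_{n_k}R_{\tau_{n_k}}}$ in Steps~1--3, move strong convergence to the front (proving it via the boundary pairing with the weak limit $\xi$ of $\partial_\nu w^{\tau_{n_k}R_{\tau_{n_k}}}$ from Remark~\ref{remark}), and append the scaling argument that passes from $w^{\tau R_\tau}$ back to $w^\tau$. One smaller remark: the conclusion $\partial_\nu w=(\gamma/|z|)w$ is obtained from $\mathcal N_w'\equiv 0$ forcing equality in Cauchy--Schwarz inside the inequality \eqref{disugdiN'} with $h\equiv 0$, not from asserting that the Pohozaev inequality becomes an equality in the limit; and you should also record why $\gamma$ is the positive root of $\mu_{j_0}=\gamma(\gamma+N-2s)$, namely that the negative root would make $|x|^{-s}\mathop{\rm Tr}w\notin L^2$.
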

\begin{proof}
We observe  that, in view of \eqref{wtaulimitata},
the family $\{w^{\tau R_\tau}\}_{\tau\in (0, \tau_0)}$, with $R_\tau\in [1,2]$ being as in Lemma \ref{lemmastimadivol},
  is bounded in $H^1(B^+_1, t^{1-2s})$.
  Therefore, for any sequence $\tau_n\to 0^+$, there exist a subsequence 
  $\{\tau_{n_k}\}_{k\in\mathbb{N}}$  and $w\in H^1(B^+_1,t^{1-2s})$ such that
\begin{equation}\label{debtaun}
w^{\tau_{n_k} R_{\tau_{n_k}}} \rightharpoonup w\quad \text{as $k\to\infty$ 
weakly in $H^1(B^+_1, t^{1-2s})$}.
\end{equation}
We observe that, by  \eqref{normalizzaz}, \eqref{debtaun}, and the compactness of the trace map \eqref{compact}
we have 
\begin{equation}\label{eq:normalizzazione-w}
\int_{\partial^+ B^+_1}t^{1-2s}|w|^2\, dS=1,
\end{equation}
so that, in particular, $w\not\equiv 0$.

We split the rest of the proof into several steps.
 
\smallskip\noindent
\emph{Step 1.}
Our first goal is to show that the convergence in \eqref{debtaun} is actually a strong convergence, i.e. 
\begin{equation}\label{convfortedaprovare}
 w^{\tau_{n_k} R_{\tau_{n_k}}} \to w  \quad \text{as $k\to\infty$ strongly in $H^1(B^+_1, t^{1-2s})$}.
\end{equation}
To this aim, we notice that, since $B^+_1\subset B^+_{1/\tau_{n_k} R_{\tau_{n_k}}}$ if $k$ is sufficiently large,  by \eqref{eqwlambda} we have
\begin{multline}\label{passallimite}
\int_{B^+_1}t^{1-2s} \nabla w^{\tau_{n_k} R_{\tau_{n_k}}} \cdot \nabla \psi \, dz=\, \kappa_s\int_{\mathcal C\cap B'_1} (\tau_{n_k} R_{\tau_{n_k}} )^{2s}h(\tau_{n_k} R_{\tau_{n_k}} x)\mathop{\rm Tr}w^{\tau_{n_k} R_{\tau_{n_k}}} \mathop{\rm Tr}\psi\, dx \\
+\kappa_s \int_{\mathcal C\cap B'_1} \frac{\lambda}{|x|^{2s}} \mathop{\rm Tr}w^{\tau_{n_k} R_{\tau_{n_k}}} \mathop{\rm Tr}\psi\, dx + \int_{\partial ^+ B^+_1} t^{1-2s}  \frac{\partial w^{\tau_{n_k} R_{\tau_{n_k}}}}{\partial\nu} \, \psi
\,dS,
\end{multline}
 for every $\psi\in H^1(B^+_1,t^{1-2s})$ with $\mathop{\rm Tr}\psi=0$ on $ B'_1\setminus\mathcal C$ and 
for sufficiently large $k$. By the continuity of the trace operator \eqref{eq:traccia-H1} from $H^1(B^+_1,t^{1-2s})$ to $ L^{2^\ast(s)}(B'_1)$, the boundedness of $\{w^{\tau_{n_k} R_{\tau_{n_k}}}\}_{k\in\mathbb{N}}$ in $H^1(B^+_1,t^{1-2s})$, and  H\"{o}lder's inequality, we have
\begin{multline}\label{prima'}
\int_{\mathcal C\cap B'_1} (\tau_{n_k}R_{\tau_{n_k}} )^{2s} h(\tau_{n_k}R_{\tau_{n_k}} x)\mathop{\rm Tr}w^{\tau_{n_k}R_{\tau_{n_k}}} \mathop{\rm Tr}\psi \, dx \to 0\\
\text{and}\quad 
\int_{\mathcal C\cap B'_1} (\tau_{n_k}R_{\tau_{n_k}} )^{2s} h(\tau_{n_k}R_{\tau_{n_k}} x)|\mathop{\rm Tr}w^{\tau_{n_k}R_{\tau_{n_k}}}|^2\, dx \to 0
\quad\text{as $k\to \infty$}.
\end{multline}
In addition, using \eqref{debtaun} and the fact that 
\begin{equation*}\label{traceopercont}
\mathop{\rm Tr}:H^1(B^+_1, t^{1-2s})  \to L^2(B'_1, |x|^{-2s})\quad\text{is continuous},
\end{equation*}
which in turn follows from \cite{FalFel14}*{Lemma 2.5}, we deduce that 
\begin{equation}\label{seconda'}
\frac{ \mathop{\rm Tr}w^{\tau_{n_k} R_{\tau_{n_k}}} }{|x|^{s}} \rightharpoonup \frac{\mathop{\rm Tr}w}{|x|^{s}} \quad \text{weakly in $L^2(B'_1)$}.
\end{equation}
In light of \eqref{debtaun}, \eqref{prima'}, \eqref{seconda'}, and Remark \ref{remark}, up to extracting a further subsequence,
we can pass to the limit in \eqref{passallimite} as $k\to \infty$, thus obtaining 
\begin{equation}\label{eqdiw}
\int_{B^+_1} t^{1-2s} \nabla w\cdot \nabla \psi \, dz=\kappa_s \int_{\mathcal C\cap B'_1} \frac{\lambda}{|x|^{2s}} \mathop{\rm Tr}w\mathop{\rm Tr}\psi\,dx + \int _{\partial^+B^+_1}t^{1-2s} \xi \psi\, dS
\end{equation}
 for every $\psi\in H^1(B^+_1,t^{1-2s})$ with $\mathop{\rm Tr}\psi=0$ on $B'_1\setminus \mathcal C$.

On the other hand, testing  \eqref{passallimite} with $\psi=w^{\tau_{n_k} R_{\tau_{n_k}}}$ yields
\begin{multline*}
\int_{B^+_1}t^{1-2s} |\nabla w^{\tau_{n_k} R_{\tau_{n_k}}}|^2\, dz
-\kappa_s \int_{\mathcal C\cap B'_1} \frac{\lambda}{|x|^{2s}} |\mathop{\rm Tr}w^{\tau_{n_k} R_{\tau_{n_k}}}  |^2\, dx \\
=  \kappa_ s \int_{\mathcal C\cap B'_1} (\tau_{n_k} R_{\tau_{n_k}})^{2s} h(\tau_{n_k} R_{\tau_{n_k}} x)|\mathop{\rm Tr}w^{\tau_{n_k} R_{\tau_{n_k}}}|^2\, dx
+  \int_{\partial ^+B^+_1} t^{1-2s} \frac{\partial w^{\tau_{n_k} R_{\tau_{n_k}}}}{\partial\nu}w^{\tau_{n_k} R_{\tau_{n_k}}}\, dS.
\end{multline*}
Using \eqref{prima'},  Remark \ref{remark}, and the fact that
\begin{equation}\label{convdelletracce}
w^{\tau_{n_k} R_{\tau_{n_k}}}\to w \quad \text{in $L^2(\partial^+B^+_1, t^{1-2s})$}
\end{equation} 
as a consequence of \eqref{debtaun} and  the compactness of the trace map \eqref{compact},
we can pass to the limit in the above identity as $k\to\infty$, thus obtaining 
\begin{multline}\label{nablaconverge}
\lim_{k\to \infty} 
\left(\int_{B^+_1}t^{1-2s} |\nabla w^{\tau_{n_k} R_{\tau_{n_k}}}|^2\, dz
-\kappa_s \int_{\mathcal C\cap B'_1} \frac{\lambda}{|x|^{2s}} |\mathop{\rm Tr}w^{\tau_{n_k} R_{\tau_{n_k}}}  |^2\, dx \right)\\= \int_{\partial^+B^+_1} t^{1-2s} \xi w\, dS=
\int_{B^+_1}t^{1-2s}|\nabla w|^2\, dz-\kappa_s \int_{\mathcal C\cap B'_1} \frac{\lambda}{|x|^{2s} } |\mathop{\rm Tr}w|^2\, dx,
\end{multline}
in  view of \eqref{eqdiw} with $\psi=w$.
We observe that  the $H^1(B^+_1, t^{1-2s})$-norm is equivalent to the norm 
\begin{equation*}
\bigg(\int_{B^+_1} t^{1-2s}|\nabla v|^2\,dz
-\kappa_s \int_{B'_1} \frac{\lambda}{|x|^{2s}} |\mathop{\rm Tr}v|^2\, dx
+ \frac{N-2s}{2}\int_{\partial^+B^+_1} t^{1-2s}v^2\,dz\bigg)^{1/2},
\end{equation*}
by \eqref{fract2}, the continuity of the trace operator \eqref{compact}, and \cite{FalFel14}*{Lemma 2.4}.
Hence \eqref{nablaconverge}, together with \eqref{convdelletracce} and \eqref{debtaun}, implies \eqref{convfortedaprovare}. 

\smallskip\noindent
\emph{Step 2.} In this step we give an explicit description of the limit profile $w$. To this aim, we consider its Almgren frequency function, defined as
\begin{equation*}\label{Nw}
\mathcal N_w(r):= \frac{D_w(r)}{H_w(r)}\quad\text{for every $r\in (0,1]$},
\end{equation*}
where 
\begin{equation*}\label{Dw}
D_w(r):= r^{2s-N}\left(\int_{B^+_r}t^{1-2s}|\nabla w|^2\,dz-\kappa_s\int_{\mathcal C\cap B'_r} \frac{\lambda}{|x|^{2s}} |\mathop{\rm Tr}w|^2\,dx\right)
\end{equation*}
and 
\begin{equation}\label{Hw}
H_w(r):=r^{2s-N-1}\int_{\partial^+B^+_r} t^{1-2s} w^2\, dS. 
\end{equation}
We notice that \eqref{eqdiw} is the weak formulation of the problem
\begin{equation}\label{formdebw}
\begin{cases}
\mathrm{div}\left(t^{1-2s}\nabla w\right)=0 &\mathrm{in \ } B^+_1\\
-\lim_{t\to0^+} t^{1-2s} \partial_t w =\kappa_s \frac{\lambda}{|x|^{2s}}\mathop{\rm Tr}w &\mathrm{in \ } \mathcal C\cap B'_1\\
\mathop{\rm Tr}w=0 &\mathrm{in \ }  B'_1\setminus \mathcal C.\\
\end{cases}
\end{equation}
We remark that the definition of $\mathcal N_w$ is well-posed: indeed, since $w\not\equiv 0$, one can argue  as in the proof of Lemma \ref{Hpositiva}, with $h\equiv 0$, to verify that $H_w(r)>0$ for every $r\in (0,1]$.  

Next, for every $k\in \mathbb N $ and  $r\in (0,1]$, we define 
\begin{equation*}
D_k(r)
:= r^{2s-N}\bigg(\int_{B^+_r}\!\!t^{1-2s}|\nabla w^{\tau_{n_k} R_{\tau_{n_k}}}|^2 dz - \kappa_s\int_{\Omega\cap B'_r}\!\!\Big( (\tau_{n_k} R_{\tau_{n_k}})^{2s} h(\tau_{n_k} R_{\tau_{n_k}}x)+\tfrac{\lambda}{|x|^{2s}}\Big)|\mathop{\rm Tr}w^{\tau_{n_k} R_{\tau_{n_k}}}|^2 dx\bigg)
\end{equation*}
and
\begin{equation*}
H_k(r):= r^{2s-N-1} \int_{\partial^+ B^+_r} t^{1-2s}|w^{\tau_{n_k} R_{\tau_{n_k}}}|^2\, dS.
\end{equation*}
By direct computations we have 
\begin{equation*}\label{conversioni}
D_k(r)= \frac{D(\tau_{n_k} R_{\tau_{n_k}}r)}{H(\tau_{n_k} R_{\tau_{n_k}})}\quad \text{and} \quad H_k(r)= \frac{H(\tau_{n_k} R_{\tau_{n_k}}r)}{H(\tau_{n_k} R_{\tau_{n_k}})},
\end{equation*}
and hence, by definition of $\mathcal{N}$ (see \eqref{N}), 
\begin{equation}\label{dovepassallim}
\mathcal N(\tau_{n_k} R_{\tau_{n_k}}r)= \frac{D(\tau_{n_k} R_{\tau_{n_k}} r)}{H(\tau_{n_k} R_{\tau_{n_k}} r)} = \frac{D_k(r)}{H_k(r)}\quad\text{for all $r\in(0,1]$ and $k\in {\mathbb N}$}. 
\end{equation}
Moreover, from  \eqref{convfortedaprovare}, \eqref{prima'}, \eqref{fract2}, and \eqref{convdelletracce} it follows that, for every fixed $r\in (0,1]$
\begin{equation}\label{combining}
D_k(r)\to D_w(r) \quad \text{and} \quad H_k(r)\to H_w(r) \quad\text{as $k\to \infty$}.
\end{equation}
Combining \eqref{combining} with the fact that
\begin{equation*}
\lim_{k\to\infty}\mathcal N(\tau_{n_k} R_{\tau_{n_k}}r)=\gamma \quad \text{for every fixed $r\in (0,1]$}
\end{equation*}
as a consequence of \eqref{illimitegamma}, and letting $k\to \infty$ in \eqref{dovepassallim}, we conclude that $\mathcal N_w(r) = \gamma$ for all $r\in(0,1]$. Hence $\mathcal N'_w(r)=0$ for every $r\in (0,1)$. Combining the Cauchy-Schwarz inequality with  estimate \eqref{disugdiN'} for $h\equiv 0$, we infer that 
\begin{equation*}
0=\mathcal N'_w(r)\geq \ 2r\frac{\left(\int_{\partial^+ B^+_r} t^{1-2s}\left|\frac{\partial w}{\partial\nu}\right|^2\, dS\right)\left(\int_{\partial^+ B^+_r} t^{1-2s}w^2\, dS\right)-\left(\int_{\partial^+ B^+_r} t^{1-2s}w \frac{\partial w}{\partial\nu}\, dS\right)^2}{\left(\int_{\partial^+ B^+_r}t^{1-2s}w^2\, dS\right)^2}\geq 0,
\end{equation*}
so that
\begin{equation*}
\left(\int_{\partial^+ B^+_r} t^{1-2s}\left|\frac{\partial w}{\partial\nu}\right|^2\, dS\right)\left(\int_{\partial^+ B^+_r} t^{1-2s}w^2\, dS\right)-\left(\int_{\partial^+ B^+_r} t^{1-2s}w \frac{\partial w}{\partial\nu}\, dS\right)^2=0
\end{equation*}
for a.e. $r\in (0,1)$. In particular, $w$ and $\frac{\partial w}{\partial\nu}$ are parallel vectors in $L^2(\partial^+B^+_r, t^{1-2s})$ for a.e. $r\in (0,1)$, i.e.  there exists $\eta:(0,1)\to\R$ such that
\begin{equation}\label{daintegrare}
\frac{d}{dr}w(r\theta)=\frac{\partial w}{\partial\nu} (r\theta)= \eta(r)w(r\theta)\quad \text{for a.e. $r\in (0,1)$ and for every $\theta\in \mathbb{S}^N_+$}.
\end{equation}  
From \eqref{daintegrare}, \eqref{Hw}, and \eqref{H'} we deduce that $\eta=\frac12 H'_w/H_w$  a.e. in $(0,1)$, so that, in view of  \eqref{H'=D},
\begin{equation*}
\eta(r)= \frac{\mathcal N_w(r)}{\gamma}=\frac{\gamma}{r}\quad \text{for a.e. $r\in (0,1)$}.
\end{equation*}
Therefore, integrating \eqref{daintegrare} over $(r,1)$ for some fixed $r\in (0,1)$, we obtain
\begin{equation}\label{wesplicita}
w(r\theta)= r^\gamma \psi(\theta)\quad \text{for every $r\in (0,1)$ and $\theta\in \mathbb{S}^N_+$},
\end{equation}
where $\psi=w\big|_{\mathbb{S}^N_+}$. Furthermore, \eqref{eq:normalizzazione-w} implies that
\begin{equation}\label{lapsihanorma1}
\int_{\mathbb{S}^N_+} \theta_{N+1}^{1-2s}|\psi(\theta)|^2\, dS=1,
\end{equation}
so that  $\psi\not\equiv0$ on $\mathbb{S}^N_+$. Moreover, the condition $\mathop{\rm Tr}w=$ on $B_1'\setminus\Omega$ implies that $\psi\in V_\omega$, being $V_\omega$ defined in \eqref{Vomega}. By plugging \eqref{wesplicita} into  \eqref{formdebw}, in view of 
 \cite{FalFel14}*{Lemma 2.1}  we infer that 
 \begin{equation*}
\gamma(\gamma+N-2s)r^{\gamma-1-2s}\theta_{N+1}^{1-2s} \psi(\theta)+ r^{\gamma-1-2s} \mathrm{div}_{\mathbb{S}^N} (\theta_{N+1}^{1-2s}\nabla _{\mathbb{S}^N}\psi(\theta))=0,
\end{equation*} 
that is, $\psi$ is an eigenfunction of problem \eqref{probautov} associated with the eigenvalue $\gamma(\gamma+N-2s) $. Therefore, by Proposition \ref{propautovalori}, there exists $j_0\geq 1$ such that $ \mu_{j_0}=\gamma(\gamma+N-2s)$. 
It follows that either $\gamma=y^+$ or $\gamma=y^-$, where
\begin{equation*}
y_{\pm}=-\frac{N-2s}{2}\pm\sqrt{\left(\frac{N-2s}{2}\right)^2 +\mu_{j_0}}.
\end{equation*}
Then \eqref{relazione} must be satisfied because, otherwise, if $\gamma=y_-$, \eqref{wesplicita} would imply 
\begin{equation*}
\mathop{\rm Tr}w(x)=|x|^{y_-}\psi\left(\frac{x}{|x|},0\right)\notin L^{2}(B'_1,|x|^{-2s}),
\end{equation*}
which contradicts the fact that $w\in H^1(B^+_1, t^{1-2s})$ and \eqref{fract2}.

\emph{Step 3.} 
To complete the proof, we prove that, up to passing to a further subsequence,  $w^{\tau_{n_k}}\to w$ as $k\to \infty$ in $H^1(B^+_1, t^{1-2s})$. 
To this aim, we observe that, by \eqref{doubH},  up to a further subsequence,  the limits
\begin{equation}\label{eq:limits}
\ell:= \lim_{k\rightarrow +\infty}\frac{H(\tau_{n_k}R_{\tau_{n_k}})}{H(\tau_{n_k})}\quad\text{and}\quad 
\overline{R}:=\lim_{k\rightarrow +\infty}R_{\tau_{n_k}}
\end{equation}
exist, with $\ell\in(0,+\infty)$ and $\overline{R}\in[1,2]$. 
By \eqref{convfortedaprovare}, a scaling argument and \eqref{eq:limits} we can verify that  
\begin{equation*}
w^{\tau_{n_k}}\rightarrow
\sqrt{\ell}w(\cdot/\overline{R})\quad\text{strongly in $H^1(B^+_1,t^{1-2s}dz)$},
\end{equation*}
see \cite{DelFelVit22}*{Proposition 4.5} for details. On the other hand, \eqref{wesplicita} implies that $\sqrt{\ell}w(\cdot/\overline{R})=
\overline{R}^{-\gamma}\sqrt{\ell}w$, so that the normalization conditions \eqref{normalizzaz} and \eqref{eq:normalizzazione-w}, together with the continuity of the trace map \eqref{compact}, imply that necessarily $\overline{R}^{-\gamma}\sqrt{\ell}=1$, thus completing the proof.
\end{proof}
The next step consists in proving that the limit in \eqref{limdaprovmagdizero} is strictly positive. To this aim, letting
$\{\psi_j\}_{j\geq 1}$ being the orthonormal basis  of $L^2(\mathbb{S}^N_+, \theta_{N+1}^{1-2s})$ constructed in  Proposition \ref{propautovalori}, we consider the corresponding Fourier coefficients associated with $U$, i.e. the functions $\varphi_j(\tau)$ defined in \eqref{phi}
for every $j\geq 1 $ and $\tau\in (0,1)$.
Therefore, for every  $\tau\in (0,1)$, 
\begin{equation}\label{espansionediU}
    U(\tau\theta)= \sum_{j\geq 1} \varphi_j(\tau)\psi_j(\theta)\quad\text{in $L^2(\mathbb{S}^N_+, \theta_{N+1}^{1-2s})$}.
\end{equation}
\begin{lemma} 
Let $\mu_{j_0} $ be as in Lemma \ref{lemmaconvblow}. Let $m\in \mathbb{N}\setminus \{0\}$ be the multiplicity of $\mu_{j_0} $, where $j_0\geq1$ is chosen as in \eqref{eq:multiplicity}. Then, for every $R\in (0,1)$
and  $j=j_0, \dots , j_0+m-1$,
\begin{align}\label{sviluppocoeff}
\varphi_{j}(\tau) =& \,\tau^\gamma\left(\frac{\varphi_j(R)}{R^\gamma} + \frac{N+\gamma-2s}{N+2\gamma-2s}\int_\tau^R t^{-N-1+2s-\gamma}\Upsilon_j(t)\, dt+\frac{\gamma R^{-N+2s-2\gamma}}{N+2\gamma-2s} \int_0^R t^{\gamma-1}\Upsilon_j(t)\, dt\right)\\
&\notag + O(\tau^{\gamma+2s-\frac{N}{p}})\quad \text{as $\tau \to 0^+$},
\end{align}
where, for every $\tau\in (0,1)$ and $j=j_0, \dots , j_0+m-1$, $\Upsilon_j(\tau)$ is defined in \eqref{U}.
\end{lemma}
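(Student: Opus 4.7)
The plan is to derive a second-order ODE satisfied by each Fourier coefficient $\varphi_j$, solve it explicitly, and then select the correct branch via the a priori size estimate $\varphi_j(\tau)=O(\tau^\gamma)$ which will follow from \eqref{prima}. First I write problem \eqref{eq1EXT} in polar coordinates on $\mathbb{S}^N_+$ and test its weak formulation against tensor products $V(r,\theta)=\phi(r)\psi_j(\theta)$ with $\phi\in C_c^\infty(0,1)$. Integrating by parts on the sphere and combining the spherical eigenvalue equation \eqref{probautov} for $\psi_j$ with the Neumann-type boundary condition from \eqref{eq1EXT}, the Hardy-potential contributions on $\omega$ cancel identically and leave the distributional identity
\begin{equation*}
 -\bigl(\tau^{N+1-2s}\varphi_j'(\tau)\bigr)' + \mu_{j_0}\,\tau^{N-1-2s}\varphi_j(\tau) = \Upsilon_j'(\tau) \quad \text{on } (0,1),
\end{equation*}
valid for every $j\in\{j_0,\dots,j_0+m-1\}$, since $\mu_j=\mu_{j_0}$ there.

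Next I substitute $\zeta(\tau):=\tau^{-\gamma}\varphi_j(\tau)$ and use $\mu_{j_0}=\gamma(N-2s+\gamma)$ from \eqref{eq:gamma-h0}: the ODE collapses to
\begin{equation*}
\bigl(\tau^{N+1-2s+2\gamma}\zeta'(\tau)\bigr)' = -\tau^\gamma\Upsilon_j'(\tau).
\end{equation*}
Integrating from $\tau$ to $R$ and transferring the derivative off $\Upsilon_j$ by integration by parts yields
\begin{equation*}
 \tau^{N+1-2s+2\gamma}\zeta'(\tau) = K + \gamma\int_0^\tau t^{\gamma-1}\Upsilon_j(t)\,dt - \tau^\gamma\Upsilon_j(\tau),
\end{equation*}
with $K:=R^{N+1-2s+2\gamma}\zeta'(R)+R^\gamma\Upsilon_j(R)-\gamma\int_0^R t^{\gamma-1}\Upsilon_j(t)\,dt$ independent of $\tau$. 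A second integration from $\tau$ to $R$, preceded by one more integration by parts to handle the double integral $\int_\tau^R s^{-N-1+2s-2\gamma}\bigl(\int_0^s t^{\gamma-1}\Upsilon_j(t)\,dt\bigr)\,ds$, expresses $\varphi_j(\tau)=\tau^\gamma\zeta(\tau)$ as the sum of the constant multiple of $\tau^\gamma\,\varphi_j(R)/R^\gamma$, the middle term $\frac{N+\gamma-2s}{N+2\gamma-2s}\tau^\gamma\int_\tau^R t^{-N-1+2s-\gamma}\Upsilon_j(t)\,dt$, the third term in \eqref{sviluppocoeff} proportional to $\tau^\gamma\int_0^R t^{\gamma-1}\Upsilon_j(t)\,dt$, plus a multiple of $K\tau^{-(N-2s+\gamma)}$ and a residual $-\frac{\gamma}{N+2\gamma-2s}\tau^{-(N-2s+\gamma)}\int_0^\tau t^{\gamma-1}\Upsilon_j(t)\,dt$.

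The crucial step is to prove $K=0$. Since $H(\tau)=\sum_{j\geq 1}\varphi_j(\tau)^2$ by Parseval's identity in $L^2(\mathbb{S}^N_+,\theta_{N+1}^{1-2s})$, estimate \eqref{prima} gives $\varphi_j(\tau)=O(\tau^\gamma)$ as $\tau\to 0^+$. Setting $\sigma:=-(N-2s+\gamma)<\gamma$, the term $K\tau^\sigma$ dominates $\tau^\gamma$ at the origin unless $K=0$, so the a priori bound forces $K=0$. To control the residual I estimate $|\Upsilon_j(\tau)|$: H\"older's inequality applied to $(h,\mathop{\rm Tr}U,\mathop{\rm Tr}\psi_j)$ with exponents $p$, $2^\ast(s)$ and a suitable $q$, combined with Lemma \ref{maggioredi2ast} and \eqref{prima}, gives $|\Upsilon_j(\tau)|\lesssim\tau^{N+\gamma-N/p}$. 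Hence $\tau^\sigma\int_0^\tau t^{\gamma-1}\Upsilon_j(t)\,dt=O(\tau^{\gamma+2s-N/p})$, which is the claimed remainder. The principal technical obstacle is the rigorous distributional handling of the ODE for $\varphi_j$ near $\tau=0$, in particular the justification that the a priori bound $\varphi_j(\tau)=O(\tau^\gamma)$ indeed rules out the singular branch $K\tau^\sigma$; once this vanishing is established, the remaining computations are only book-keeping.
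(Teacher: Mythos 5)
Your proposal reproduces the paper's proof essentially step by step: you derive the same second-order ODE for $\varphi_j$, substitute $\tau^{-\gamma}\varphi_j$ (exploiting $\mu_{j_0}=\gamma(\gamma+N-2s)$), integrate twice with integration by parts to isolate the singular branch $\tau^{-(N-2s+\gamma)}$, and estimate the remainder via $|\Upsilon_j(t)|\lesssim t^{\gamma+N-N/p}$, which is exactly the paper's bound since $N(p-1)/p=N-N/p$. The one place you argue slightly differently is in proving that the coefficient of the singular branch vanishes: you invoke the pointwise bound $\varphi_j(\tau)=O(\tau^\gamma)$ obtained from \eqref{prima} together with Parseval, whereas the paper uses the (weaker, but sufficient) weighted-$L^2$ finiteness $\int_0^1\tau^{N-1-2s}|\varphi_j(\tau)|^2\,d\tau<+\infty$ coming from $U\in H^1(B_1^+,t^{1-2s})$ and \cite{FalFel14}*{Lemma 2.4}; both arguments exploit the same threshold $\gamma>-(N-2s)/2$ and are correct. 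Likewise your triple H\"older estimate for $\Upsilon_j$ is a valid alternative to the paper's Cauchy--Schwarz followed by H\"older on each factor and yields the same power of $t$.
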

\begin{proof}
Let  $j\in\{j_0, \dots , j_0+m-1\}$.  From \eqref{eq1EXT} and \eqref{probautov} it follows that $\varphi_j$ solves 
    \begin{equation}\label{ode}
        - \varphi''_j(\tau)-\frac{N+1-2s}{\tau}\varphi'_j(\tau)+ \frac{\mu_j }{\tau^2}\varphi_j(\tau)= \zeta_j(\tau)\quad \text{in the sense of distributions in $(0,1)$},
    \end{equation}
    where, for every $\tau\in (0,1)$, 
    \begin{equation*}\label{zeta}
       \zeta_j(\tau):= \kappa_s  \tau^{2s-2}\int_\omega h (\tau\theta') \mathop{\rm Tr}U(\tau\theta') \mathop{\rm Tr}\psi_j(\theta')\, dS' .
    \end{equation*}
    Being $\Upsilon_j(\tau)$ as in \eqref{U},  notice  that
    \begin{equation*}\label{relazzeta}
        \zeta_j(\tau) = \tau^{2s-N-1} \Upsilon'_j(\tau)\quad \text{in the sense of distributions in $(0,1)$},
    \end{equation*}
which, in view of \eqref{relazione}, allows us to rewrite \eqref{ode} as follows
\begin{equation*}
   -\big( \tau^{N+1+2\gamma-2s}(\tau^{-\gamma}\varphi_j)'\big)'=\tau^\gamma \Upsilon'_j(\tau)\quad \text{in the sense of distributions in $(0,1)$}. 
\end{equation*}
    A double integration of the above equation over $(\tau, R)$, for any $R\in (0,1)$  and $\tau\in(0,R)$, leads us to conclude that, for any fixed $R\in (0,1)$, 
    there exists $c_j(R)\in\mathbb{R}$ such that, for all $\tau \in (0,R)$,
\begin{align}\label{primaversione}
        \varphi_j(\tau)= \tau^\gamma&\left( \frac{\varphi_j(R)}{R^{\gamma}}-\frac{\gamma c_j (R)R^{-N-2\gamma+2s}}{N+2\gamma-2s} + \frac{N+\gamma-2s}{N+2\gamma-2s}\int_\tau^R t ^{-N+2s-\gamma-1}\Upsilon_j(t)\, dt\right)\\
        \notag& + \frac{\gamma\tau^{-N+2s-\gamma}}{N+2\gamma-2s}\left(c_j(R)+ \int_\tau^R t^{\gamma-1}\Upsilon_j(t)\, dt\right).
    \end{align}
    We claim that, for every $R\in(0,1)$, 
\begin{equation}\label{L1}
       t\mapsto  t^{-N+2s-\gamma-1} \Upsilon_j(t)\in L^1(0,R).
    \end{equation} 
     To this aim, applying H\"{o}lder's inequality we estimate $|\Upsilon_j(t)|$ as follows 
\begin{equation}\label{stimaupsilon}
    |\Upsilon_j(t)|\leq \kappa_s \sqrt{\int_{\mathcal C\cap B'_t} |h(x)||\mathop{\rm Tr}U(x)|^2\, dx}\cdot \sqrt{\int_{\mathcal C\cap B'_t} |h(x)|\left|\mathop{\rm Tr}\psi_j\big(\tfrac{x}{|x|}\big)\right|^2\, dx}.
\end{equation}  
Using  \eqref{usefulineq2} and recalling the definition of the functions $D$, $H$ and $\mathcal N$ given in \eqref{D}, \eqref{H} and \eqref{N} respectively, for every $t\in (0,R_0)$ we have 
\begin{align}\label{stimaupsilon1}
  \int_{\mathcal C\cap B'_t} |h(x)||\mathrm{Tr}U(x)|^2\, dx &\leq V_N^{\frac{2sp-N}{Np}}\Vert h\Vert _{L^p(\mathcal C \cap B'_1)} t^{\frac{2sp-N}{p}} \left(\int_{\mathcal C\cap B'_t }|\mathop{\rm Tr}U(x)|^{2^\ast(s)}\, dx\right)^{\frac{2}{2^\ast(s)}} \\
  \notag& \leq V_N^{\frac{2sp-N}{Np}}\Vert h\Vert _{L^p(\mathcal C \cap B'_1)} \frac{\tilde S_{N,s}}{\tilde C} t^{\frac{2sp-N}{p}} t^{N-2s}\left(D(t)+\frac{N-2s}{2}H(t)\right)\\
  \notag &= V_N^{\frac{2sp-N}{Np}}\Vert h\Vert _{L^p(\mathcal C \cap B'_1)} \frac{\tilde S_{N,s}}{\tilde C} t^ {\frac{ N(p-1)}{p}}H(t)\left(\mathcal N(t)+\frac{N-2s}{2}\right).
\end{align}
In a similar way, we can estimate the second term in \eqref{stimaupsilon} as follows
\begin{align}\label{stimaupsilon2}
         \int_{\mathcal C\cap B'_t} |h(x)|\left|\mathop{\rm Tr}\psi_j\big(\tfrac{x}{|x|}\big)\right|^2\, dx &\leq   V_N^{\frac{2sp-N}{Np}}\Vert h\Vert _{L^p(\mathcal C \cap B'_1)} t^{\frac{2sp-N}{p}} \left(\int_{B'_t }
     \left|\mathop{\rm Tr}\psi_j\big(\tfrac{x}{|x|}\big)\right|^{2^\ast(s)}\, dx\right)^{\frac{2}{2^\ast(s)}} \\
    \notag &= V_N^{\frac{2sp-N}{Np}}\Vert h\Vert _{L^p(\mathcal C \cap B'_1)}  t^{\frac{2sp-N}{p}} t^{\frac{2N}{2^\ast(s)}}  \left(\int_{B'_1} 
     \left|\mathop{\rm Tr}\psi_j\big(\tfrac{x}{|x|}\big)\right|^{2^\ast(s)} \, dx\right)^{\frac{2}{2^\ast(s)}}\\
     \notag &= V_N^{\frac{2sp-N}{Np}}\Vert h\Vert _{L^p(\mathcal C \cap B'_1)}   t^{\frac{N(p-1)}{p}} \left(\int_{B'_1} 
     \left|\mathop{\rm Tr}\psi_j\big(\tfrac{x}{|x|}\big)\right|^{2^\ast(s)} \, dx\right)^{\frac{2}{2^\ast(s)}}.
    \end{align}
From \eqref{stimaupsilon}, \eqref{stimaupsilon1}, and \eqref{stimaupsilon2}, taking into account  \eqref{Nlimitdallalto} and \eqref{prima}, we infer that 
\begin{equation*}
   |\Upsilon_j(t)|\leq  \mathop{\rm const}t^{\gamma+\frac{N(p-1)}{p}}\quad\text{for all }t\in (0,R_0),
\end{equation*}
and then, since $\Upsilon_j$ is bounded in $(0,1)$, 
\begin{equation*}
   |\Upsilon_j(t)|\leq  \mathop{\rm const}t^{\gamma+\frac{N(p-1)}{p}}\quad\text{for all }t\in (0,1),
\end{equation*}
for some $\mathrm{const}>0$ independent of $t$. It follows that, for every $R\in (0,1)$,
\begin{equation}\label{stimaupsilon3}
    \int_0^R t ^{-N+2s-\gamma-1}|\Upsilon_j(t)|\, dt\leq \mathrm{const}\, R^{2s-\frac{N}{p}},
\end{equation}
for some $\mathrm{const}>0$ independent of $R$. \eqref{L1} is thereby proved. 

We observe that, for every $R\in (0,1)$, 
\begin{equation}\label{ugualeazero}
    c_j(R) + \int_0^R t^{\gamma-1} \Upsilon_j(\tau)\, dt =0.
\end{equation}
Indeed, 
the function  $t\mapsto  t ^{\gamma-1}\Upsilon_j(t)$ is integrable in a neighbourhood of $0$, due to \eqref{L1} and the fact that 
\begin{equation}\label{esponenti}
 \gamma>-\frac{N-2s}2;
\end{equation}
the latter inequality is a consequence of \eqref{relazione} and Proposition \ref{propautovalori}, which ensures that $\mu_{j_0}>-(\frac{N-2s}2)^2$. Then, if \eqref{ugualeazero} were not true, in view of \eqref{primaversione} we would have $\varphi_j(\tau)\sim \mathop{\rm const}\tau^{-N+2s-\gamma}$ as $\tau\to0^+$ for some $\mathop{\rm const}\neq 0$. This would contradict the fact that, by the Parseval identity,
\begin{equation*}
|\varphi_j(\tau)|^2\leq \int_{\mathbb{S}^N_+} \theta_{N+1}^{1-2s}|U(\tau\theta)|^2\, dS,
\end{equation*}
and hence, in view of  \cite{FalFel14}*{Lemma 2.4},
\begin{equation*}
    \int_0^{1}\tau^{N-1-2s}|\varphi_j(\tau)|^2\,d\tau\leq \int_{B_{1}^+}t^{1-2s}\frac{U^2(z)}{|z|^2}\,dz<+\infty.
\end{equation*}
Then \eqref{ugualeazero} is satisfied.

Once \eqref{ugualeazero} is proved, we can conclude that 
\begin{equation}\label{opiccolo}
    \tau^{-N+2s-\gamma}\left(c_j(R)+\int_\tau^R t^{\gamma-1}\Upsilon_j(t)\, dt\right)=O(\tau^{\gamma+2s-\frac{N}{p}})\quad \text{as $\tau\to 0^+$}.
\end{equation}
Indeed, by \eqref{stimaupsilon3} and \eqref{esponenti}, we have that, for all $\tau\in (0,1)$,
\begin{equation*}
\begin{split}
    \biggl|\tau^{-N+2s-\gamma}&\left(c_j(R)+\int_\tau^R t^{\gamma-1}\Upsilon_j(t)\, dt\right)\biggr| = \biggl| \tau^{-N+2s-\gamma} \int_0^\tau t^{\gamma-1}\Upsilon_j(t)\, dt\biggr|\\
    &\leq  \tau^{-N+2s-\gamma} \int_0^\tau t^{2\gamma+N-2s}\cdot t^{-N+2s-\gamma-1}|\Upsilon_j(t)|\, dt \leq \mathrm{const}\,\tau^{\gamma+2s-\frac{N}{p}}.  
    \end{split}
\end{equation*}
Hence \eqref{opiccolo} is proved. Substituting \eqref{ugualeazero} and \eqref{opiccolo} into \eqref{primaversione}, we 
finally obtain \eqref{sviluppocoeff}.  
\end{proof}
\begin{lemma}
We have 
    \begin{equation}\label{illimiteepositivo}
        \lim_{r\to 0^+}\frac{H(r)}{r^{2\gamma}}>0,
    \end{equation}
    being $\gamma$ defined in \eqref{illimitegamma}. 
\end{lemma}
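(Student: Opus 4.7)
My plan is to argue by contradiction: assuming $\lim_{r\to 0^+} H(r)/r^{2\gamma} = 0$, I will extract enough information from the Fourier expansion \eqref{sviluppocoeff} to force the blow-up profile constructed in Lemma \ref{lemmaconvblow} to vanish identically, contradicting its normalization \eqref{lapsihanorma1}. The engine of the proof is that, since $p > N/(2s)$ yields $2s - N/p > 0$, letting $\tau\to 0^+$ inside \eqref{sviluppocoeff} and noting that the tail $\int_0^\tau t^{-N-1+2s-\gamma}\Upsilon_j(t)\,dt$ is itself $O(\tau^{2s-N/p})$ via the bound $|\Upsilon_j(t)|\leq \mathop{\rm const}\,t^{\gamma+N(p-1)/p}$ used in the proof of \eqref{sviluppocoeff}, I obtain
\[
\varphi_j(\tau) = \beta_j \, \tau^\gamma + O(\tau^{\gamma + 2s - N/p}) = \beta_j \, \tau^\gamma + o(\tau^\gamma), \qquad j = j_0, \dots, j_0+m-1,
\]
where $\beta_j$ is the constant appearing in \eqref{betah}. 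The $R$-independence of $\beta_j$ is automatic, since it equals $\lim_{\tau\to 0^+}\varphi_j(\tau)/\tau^\gamma$.

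Next I will tie these Fourier coefficients to the blow-up. Given any sequence $\tau_n \to 0^+$, Lemma \ref{lemmaconvblow} furnishes a subsequence along which $w^{\tau_{n_k}} \to |z|^\gamma \psi(z/|z|)$ strongly in $H^1(B_1^+, t^{1-2s})$, with $\psi = \sum_{j = j_0}^{j_0+m-1} c_j \psi_j$ and $\|\psi\|_{L^2(\mathbb{S}^N_+, \theta_{N+1}^{1-2s})} = 1$. Compactness of the trace operator \eqref{compact}, combined with this strong $H^1$-convergence and Parseval on $\mathbb{S}^N_+$, yields
\[
\frac{\varphi_j(\tau_{n_k})}{\sqrt{H(\tau_{n_k})}} = \int_{\mathbb{S}^N_+} \theta_{N+1}^{1-2s} w^{\tau_{n_k}}(\theta) \psi_j(\theta)\, dS \longrightarrow c_j, \qquad j = j_0, \dots, j_0+m-1.
\]

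The contradiction then arises in two successive stages. First, inserting the asymptotic above,
\[
\frac{\varphi_j(\tau_{n_k})}{\sqrt{H(\tau_{n_k})}} = \beta_j \, \frac{\tau_{n_k}^\gamma}{\sqrt{H(\tau_{n_k})}} + O\!\left(\frac{\tau_{n_k}^{\gamma + 2s - N/p}}{\sqrt{H(\tau_{n_k})}}\right),
\]
and since the hypothesis $H(\tau)/\tau^{2\gamma}\to 0$ forces $\tau^\gamma / \sqrt{H(\tau)} \to +\infty$, finiteness of each $c_j$ compels $\beta_j = 0$ for every $j = j_0,\dots, j_0+m-1$. Second, once each $\beta_j$ vanishes, the same ratio is $O(\tau^{\gamma + 2s - N/p}/\sqrt{H(\tau)})$; invoking the lower bound \eqref{seconda} with $\varepsilon > 0$ so small that $2s - N/p - \varepsilon/2 > 0$ (admissible since $p > N/(2s)$), I conclude
\[
\frac{|\varphi_j(\tau_{n_k})|}{\sqrt{H(\tau_{n_k})}} \leq \mathop{\rm const}\cdot \tau_{n_k}^{\,2s - N/p - \varepsilon/2} \longrightarrow 0,
\]
so every $c_j$ vanishes and thus $\psi\equiv 0$, in contradiction with \eqref{lapsihanorma1}.

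The delicate point will be the second stage: it rests on a quantitative pinch between the remainder exponent $2s - N/p$ supplied by the Pohozaev-based expansion \eqref{sviluppocoeff} and the almost-sharp upper exponent $2\gamma + \varepsilon$ permitted by \eqref{seconda}. This is exactly where the hypothesis $p > N/(2s)$ is used crucially; everything else — Parseval on $\mathbb{S}^N_+$, strong $H^1$-convergence of the blow-ups, and compactness of the trace to the upper half-sphere — is already in place from the earlier sections.
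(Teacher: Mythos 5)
Your proof is correct and follows essentially the same route as the paper's: both rest on the Fourier-coefficient asymptotics \eqref{sviluppocoeff}, the upper exponent bound $H(\tau)\geq C_4\,\tau^{2\gamma+\varepsilon}$ from \eqref{seconda}, and the blow-up convergence of Lemma \ref{lemmaconvblow} to contradict the normalization \eqref{lapsihanorma1}. The only cosmetic difference is how you reach $\beta_j=0$: the paper uses the Parseval identity $H(\tau)=\sum_{j\geq 1}|\varphi_j(\tau)|^2$, so that $|\varphi_j(\tau)|^2\leq H(\tau)$ and the contradiction hypothesis immediately gives $\varphi_j(\tau)/\tau^\gamma\to 0$ and hence $\beta_j=0$, whereas you extract the same conclusion from the finiteness of the limits $c_j$ furnished by the blow-up convergence of $\varphi_j/\sqrt{H}$; both are valid, the paper's being marginally more direct.
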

\begin{proof}
    Recalling the definition of $H$ given in \eqref{H}, using \eqref{espansionediU} and the Parseval identity, we have that 
    \begin{equation}\label{Hcomesomma}
        H(\tau)= \sum_{j\geq 1} |\varphi_j(\tau)|^2.
    \end{equation}
    We suppose by contradiction that \eqref{illimiteepositivo} does not hold true: by  \eqref{Hcomesomma}, this implies that  
    \begin{equation*}
        \lim_{\tau\to 0^+} \frac{\varphi_j(\tau)}{\tau^\gamma}=0\quad \text{for every $j=j_0, \dots, j_0+m-1$},  
    \end{equation*}
    which in turn, combined with \eqref{sviluppocoeff}, implies that, for every $j=j_0, \dots, j_0+m-1$ and $R\in (0,1)$,
    \begin{equation*}
        \frac{\varphi_j(R)}{R^\gamma} + \frac{N+\gamma-2s}{N+2\gamma-2s}\int_0^R t^{-N-1+2s-\gamma}\Upsilon_j(t)\, dt+\frac{\gamma R^{-N+2s-2\gamma}}{N+2\gamma-2s} \int_0^R t^{\gamma-1}\Upsilon_j(t)\, dt =0.
    \end{equation*}
    Plugging the above identity  into \eqref{sviluppocoeff}, we obtain that,  for every $j=j_0, \dots, j_0+m-1$,  
    \begin{equation*}
        \varphi_j(\tau)= -\tau^\gamma\frac{N+\gamma-2s}{N+2\gamma-2s} \int_0^\tau t^{-N-1+2s-\gamma}\Upsilon_j(t)\, dt + O(\tau^{\gamma+2s-\frac{N}{p}})\quad \text{as $\tau\to 0^+$}. 
    \end{equation*}
    From this, \eqref{phi}, and \eqref{stimaupsilon3} it follows that, for every $j=j_0, \dots, j_0+m-1$, 
    \begin{equation*}
        (U(\tau\cdot), \psi_j)_{L^2(\mathbb{S}^N_+, \theta_{N+1}^{1-2s})}= O(\tau^{\gamma+2s-\frac{N}{p}}) \quad \text{as $\tau\to 0^+$}. 
    \end{equation*}
 Using \eqref{seconda} with $\varepsilon=2s-\frac Np$ to estimate from below $\sqrt{H(\tau)}$, from  \eqref{wlambda} it follows that, for every $j=j_0, \dots, j_0+m-1$, 
    \begin{equation}\label{lassurdo}
     (w^\tau, \psi_j)_{L^2(\mathbb{S}^N_+, \theta_{N+1}^{1-2s})}= O(\tau^{s-\frac{N}{2p}})
     =o(1)
     \quad \text{as $\tau\to 0^+$}.    
    \end{equation}
    On the other hand,  by Lemma \ref{lemmaconvblow} and continuity of the trace map \eqref{compact}, for every  sequence $\tau_n\to 0^+$, there exist a subsequence $\{\tau_{n_{k}}\}_{k\in\mathbb{N}}$  and a  function $\psi\in \mathrm{span }\{\psi_j: j=j_0, \dots, j_0+m-1\}$ satisfying \eqref{lapsihanorma1} such that 
    $w^{\tau_{n_k}}\to \psi$ in $L^2(\mathbb{S}^N_+, \theta_{N+1}^{1-2s})$ as $k\to \infty$. This would imply the convergence
    \begin{equation*}
      (w^{\tau_{n_k}}, \psi)_{L^2(\mathbb{S}^N_+, \theta_{N+1}^{1-2s})} \to \Vert \psi\Vert ^2_{L^2(\mathbb{S}^N_+, \theta_{N+1}^{1-2s})} =1\quad \text{as $k\to\infty$},
    \end{equation*}
    which contradicts \eqref{lassurdo}. 
\end{proof}  
We are now ready to  prove the asymptotics at the vertex of the cone, stated in Theorems \ref{asimestens} and \ref{asimu}.

\begin{proof}[Proof of Theorem \ref{asimestens}]
Let $\mu_{j_0}$ be as in Lemma \ref{lemmaconvblow}, with $j_0\geq 1$ chosen as in \eqref{eq:multiplicity} and $m\in\mathbb{N}\setminus \{0\}$ being the multiplicity of $\mu_{j_0}$. Let $\{\psi_j\}_{j=j_0,\dots, j_0+m-1}$ be an orthonormal basis of the eigenspace associated with the eigenvalue $\mu_{j_0} $. By Lemma \ref{lemmaconvblow}, for any sequence $\tau_n\to 0^+$ there exist a subsequence $\{\tau_{n_k}\}_{k\geq 1}$ and a vector $(\beta_{j_0}, \dots, \beta_{j_0+m-1})\in \mathbb{R}^m\setminus\{0\}$  such that 

 \begin{equation}\label{eq:convUsubs}
     \frac{U(\tau_{n_k} z)}{\tau_{n_k}^{\gamma}}\to |z|^\gamma\sum _{j=j_0}^{j_0+m-1}\beta_j \psi_j\left(\frac{z}{|z|}\right)\quad \text{in $H^1(B^+_1,t^{1-2s})$ as $k\to \infty$},  
 \end{equation}
where we have taken into account also \eqref{limdaprovmagdizero} and \eqref{illimiteepositivo}. To prove \eqref{risultatofinale-ext}, it is enough to prove that the coefficients $\beta_j$ above satisfy \eqref{betah}: indeed, showing that the limit depends nor on the sequence $\{\tau_n\}$ neither on its subsequence, the thesis follows from Urysohn's subsequence principle. To this purpose, it is sufficient to notice that, by \eqref{phi} and \eqref{eq:convUsubs}, for every $\ell=j_0,\dots,j_0+m-1$, 
\begin{equation*}
    \lim_{k\to \infty}\frac{\varphi_{\ell}(\tau_{n_{k}})}{\tau_{n_k}^{\gamma}}=\sum_{j=j_0}^{j_0+m-1}\beta_j\int_{\mathbb{S}^N_+} \theta_{N+1}^{1-2s} \psi_j(\theta)\psi_{\ell}(\theta)\, dS=\beta_{\ell},
\end{equation*}
whereas  \eqref{sviluppocoeff} yields
\begin{equation*}
    \lim_{k\to \infty}\frac{\varphi_\ell(\tau_{n_k})}{\tau_{n_k}^{\gamma}}= \frac{\varphi_\ell(R)}{R^\gamma} + \frac{N+\gamma-2s}{N+2\gamma-2s}\int_0^R t^{-N-1+2s-\gamma}\Upsilon_\ell(t)\, dt+\frac{\gamma R^{-N+2s-2\gamma}}{N+2\gamma-2s} \int_0^R t^{\gamma-1}\Upsilon_\ell(t)\, dt
\end{equation*}
for every $R\in (0,1)$. This completes the proof.  
\end{proof}

\begin{proof}[Proof of Theorem \ref{asimu}]
  The proof follows from Theorem \ref{asimestens}. Indeed, if $u\in \mathcal{D}^{s,2}(\mathbb{R}^N)$ is a non-trivial weak  solution to \eqref{eq1}, we  can consider its extension $U=\mathcal H(u)\in \mathcal{D}^{1,2}(\mathbb{R}^{N+1}_+, t^{1-2s})$. 
  We observe that, since $\mathop{\rm Tr}U\not\equiv0$ in $\R^N$,   $U\not\equiv 0$ in $\R^{N+1}_+$; then $U\not\equiv 0$ in $B_1^+$ by classical unique continuation principles for elliptic operators, see e.g. \cite{GarLin86}.  Therefore, we can apply
  to $U$  Theorem \ref{asimestens}. The conclusion finally follows exploiting  the continuity of the trace map \eqref{eq:traccia-H1}.
\end{proof}

\section*{Acknowledgements}
The authors are members of the GNAMPA research group of INdAM - Istituto Nazionale di Alta Matematica.
A. De Luca and S. Vita are partially supported by the INDAM-GNAMPA
    2023 grant ``Regolarit\`{a} e singolarit\`{a} in problemi con frontiere libere'' CUP E53C22001930001 and the INDAM-GNAMPA
    2024 grant ``Nuove frontiere nella capillarit\`{a} non locale'' CUP E53C23001670001.
    S. Vita is partially supported by the MUR funding for Young Researchers - Seal of Excellence (ID: SOE\_0000194. Acronym: ADE. Project Title: Anomalous diffusion equations: regularity and geometric properties of solutions and free boundaries). 
    V. Felli is partially supported by the MUR-PRIN project no. 20227HX33Z 
``Pattern formation in nonlinear phenomena'' granted by the European Union -- Next Generation EU.

\begin{bibdiv}
\begin{biblist}

\bib{AE97}{article}{
   author={Adolfsson, V.},
   author={Escauriaza, L.},
   title={$C^{1,\alpha}$ domains and unique continuation at the boundary},
   journal={Comm. Pure Appl. Math.},
   volume={50},
   date={1997},
   number={10},
   pages={935--969},
   issn={0010-3640},
   %review={\MR{1466583}},
   %doi={10.1002/(SICI)1097-0312(199710)50:10<935::AID-CPA1>3.0.CO;2-H},
}

\bib{AEK95}{article}{
   author={Adolfsson, V.},
   author={Escauriaza, L.},
   author={Kenig, C.},
   title={Convex domains and unique continuation at the boundary},
   journal={Rev. Mat. Iberoamericana},
   volume={11},
   date={1995},
   number={3},
   pages={513--525},
   issn={0213-2230},
   %review={\MR{1363203}},
   %doi={10.4171/RMI/182},
}

\bib{almgren}{article}{
   author={Almgren, F. J., Jr.},
   title={Dirichlet's problem for multiple valued functions and the
   regularity of mass minimizing integral currents},
   conference={
      title={Minimal submanifolds and geodesics},
      address={Proc. Japan-United States Sem., Tokyo},
      date={1977},
   },
   book={
      publisher={North-Holland, Amsterdam-New York},
   },
   isbn={0-444-85327-8},
   date={1979},
   pages={1--6},
   %review={\MR{0574247}},
}

\bib{BanBog04}{article}{
   author={Ba\~{n}uelos, R.},
   author={Bogdan, K.},
   title={Symmetric stable processes in cones},
   journal={Potential Anal.},
   volume={21},
   date={2004},
   number={3},
   pages={263--288},
   issn={0926-2601},
  }

\bib{BogSiuSto15}{article}{
   author={Bogdan, K.},
   author={Siudeja, B.},
   author={St\'{o}s, A.},
   title={Martin kernel for fractional Laplacian in narrow cones},
   journal={Potential Anal.},
   volume={42},
   date={2015},
   number={4},
   pages={839--859},
   issn={0926-2601},
}

\bib{CafSil07}{article}{
  author={Caffarelli, L.},
  author={Silvestre, L.},
  title={An extension problem related to the fractional Laplacian},
  journal={Comm. Partial Differential Equations},
  volume={32},
  date={2007},
  number={7-9},
  pages={1245--1260},
  issn={0360-5302},
  %review={\MR{2354493}},
  %doi={10.1080/03605300600987306},
}

\bib{Ca1939}{article}{
   author={Carleman, T.},
   title={Sur un probl\`eme d'unicit\'{e} pur les syst\`emes d'\'{e}quations
   aux d\'{e}riv\'{e}es partielles \`a deux variables ind\'{e}pendantes},
   language={French},
   journal={Ark. Mat. Astr. Fys.},
   volume={26},
   date={1939},
   number={17},
   pages={9},
   %review={\MR{0000334}},
}

 \bib{DelFel21}{article}{
   author={De Luca, A.},
   author={Felli, V.},
   title={Unique continuation from the edge of a crack},
   journal={Math. Eng.},
   volume={3},
   date={2021},
   number={3},
   pages={Paper No. 023, 40},
   %review={\MR{4146719}},
   %doi={10.3934/mine.2021023},
}

\bib{DelFelSic23}{article}{
   author={De Luca, A.},
   author={Felli, V.},
   author={Siclari, G.},
   title={Strong unique continuation from the boundary for the spectral
   fractional Laplacian},
   journal={ESAIM Control Optim. Calc. Var.},
   volume={29},
   date={2023},
   pages={Paper No. 50, 37},
   issn={1292-8119},
   %review={\MR{4608212}},
   %doi={10.1051/cocv/2023045},
}

\bib{DelFelVit22}{article}{
   author={De Luca, A.},
   author={Felli, V.},
   author={Vita, S.},
   title={Strong unique continuation and local asymptotics at the boundary
   for fractional elliptic equations},
   journal={Adv. Math.},
   volume={400},
   date={2022},
   pages={Paper No. 108279, 67},
   issn={0001-8708},
   %review={\MR{4385144}},
   %doi={10.1016/j.aim.2022.108279},
}

\bib{DinPalVal12}{article}{
   author={Di Nezza, E.},
   author={Palatucci, G.},
   author={Valdinoci, E.},
   title={Hitchhiker's guide to the fractional Sobolev spaces},
   journal={Bull. Sci. Math.},
   volume={136},
   date={2012},
   number={5},
   pages={521--573},
   issn={0007-4497},
   %review={\MR{2944369}},
   %doi={10.1016/j.bulsci.2011.12.004},
}

\bib{DFV2000}{article}{
   author={Dipierro, S.},
   author={Felli, V.},
   author={Valdinoci, E.},
   title={Unique continuation principles in cones under nonzero Neumann
   boundary conditions},
   journal={Ann. Inst. H. Poincar\'{e} C Anal. Non Lin\'{e}aire},
   volume={37},
   date={2020},
   number={4},
   pages={785--815},
   issn={0294-1449},
   %review={\MR{4104826}},
   %doi={10.1016/j.anihpc.2020.01.005},
}

\bib{FalFel14}{article}{
   author={Fall, M. M.},
   author={Felli, V.},
   title={Unique continuation property and local asymptotics of solutions to
   fractional elliptic equations},
   journal={Comm. Partial Differential Equations},
   volume={39},
   date={2014},
   number={2},
   pages={354--397},
   issn={0360-5302},
   %review={\MR{3169789}},
   %doi={10.1080/03605302.2013.825918},
}

\bib{FelFer13}{article}{
   author={Felli, V.},
   author={Ferrero, A.},
   title={Almgren-type monotonicity methods for the classification of
   behaviour at corners of solutions to semilinear elliptic equations},
   journal={Proc. Roy. Soc. Edinburgh Sect. A},
   volume={143},
   date={2013},
   number={5},
   pages={957--1019},
   issn={0308-2105},
   %review={\MR{3109767}},
   %doi={10.1017/S0308210511001314},
}

\bib{FelSic22}{article}{
   author={Felli, V.},
   author={Siclari, G.},
   title={Unique continuation from a crack's tip under Neumann boundary
   conditions},
   journal={Nonlinear Anal.},
   volume={222},
   date={2022},
   pages={Paper No. 113002, 36},
   issn={0362-546X},
   %review={\MR{4438229}},
   %doi={10.1016/j.na.2022.113002},
}

\bib{GarLin86}{article}{
   author={Garofalo, N.},
   author={Lin, F.},
   title={Monotonicity properties of variational integrals, $A_p$ weights
   and unique continuation},
   journal={Indiana Univ. Math. J.},
   volume={35},
   date={1986},
   number={2},
   pages={245--268},
   issn={0022-2518},
   %review={\MR{0833393}},
   %doi={10.1512/iumj.1986.35.35015},
}

\bib{grisvard}{book}{
   author={Grisvard, P.},
   title={Elliptic problems in nonsmooth domains},
   series={Monographs and Studies in Mathematics},
   volume={24},
   publisher={Pitman (Advanced Publishing Program), Boston, MA},
   date={1985},
   pages={xiv+410},
   isbn={0-273-08647-2},
   %review={\MR{0775683}},
}
\bib{HarLitPol52}{book}{
   author={Hardy, G. H.},
   author={Littlewood, J. E.},
   author={P\'{o}lya, G.},
   title={Inequalities},
   note={2d ed},
   publisher={Cambridge, at the University Press, },
   date={1952},
   pages={xii+324},
   %review={\MR{0046395}},
}

\bib{Her77}{article}{
   author={Herbst, I. W.},
   title={Spectral theory of the operator
   $(p\sp{2}+m\sp{2})\sp{1/2}-Ze\sp{2}/r$},
   journal={Comm. Math. Phys.},
   volume={53},
   date={1977},
   number={3},
   pages={285--294},
   issn={0010-3616},
   %review={\MR{0436854}},
}

\bib{JinLiXio14}{article}{
   author={Jin, T.},
   author={Li, Y.},
   author={Xiong, J.},
   title={On a fractional Nirenberg problem, part I: blow up analysis and
   compactness of solutions},
   journal={J. Eur. Math. Soc. (JEMS)},
   volume={16},
   date={2014},
   number={6},
   pages={1111--1171},
   issn={1435-9855},
   %review={\MR{3226738}},
   %doi={10.4171/JEMS/456},
}

\bib{KriLi23}{article}{
   author={Kriventsov, D.},
   author={Li, Z.},
   title={Asymptotic expansions for harmonic functions at conical boundary points},
   pages={26pp},
   date={2023},
   status={arXiv:2307.10517 preprint},
 }

\bib{KN98}{article}{
   author={Kukavica, I.},
   author={Nystr\"{o}m, K.},
   title={Unique continuation on the boundary for Dini domains},
   journal={Proc. Amer. Math. Soc.},
   volume={126},
   date={1998},
   number={2},
   pages={441--446},
   issn={0002-9939},
   %review={\MR{1415331}},
   %doi={10.1090/S0002-9939-98-04065-9},
}

\bib{Mic06}{article}{
   author={Michalik, K.},
   title={Sharp estimates of the Green function, the Poisson kernel and the
   Martin kernel of cones for symmetric stable processes},
   journal={Hiroshima Math. J.},
   volume={36},
   date={2006},
   number={1},
   pages={1--21},
   issn={0018-2079},
}

\bib{Nek93}{article}{
   author={Nekvinda, A.},
   title={Characterization of traces of the weighted Sobolev space
   $W^{1,p}(\Omega,d^\epsilon_M)$ on $M$},
   journal={Czechoslovak Math. J.},
   volume={43(118)},
   date={1993},
   number={4},
   pages={695--711},
   issn={0011-4642},
   %review={\MR{1258430}},
}

\bib{opic-kufner}{book}{
   author={Opic, B.},
   author={Kufner, A.},
   title={Hardy-type inequalities},
   series={Pitman Research Notes in Mathematics Series},
   volume={219},
   publisher={Longman Scientific \& Technical, Harlow},
   date={1990},
   pages={xii+333},
   isbn={0-582-05198-3},
   %review={\MR{1069756}},
}

\bib{Sal08}{book}{
   author={Salsa, S.},
   publisher={Milan Springer-Verlag Italia},
   title={Partial Differential Equations in Action. From Modelling to Theory},
   %doi={10.1007/978-3-319-15093-2},
   year= {2008},
}

\bib{SirTerVit21a}{article}{
  author={Sire, Y.},
  author={Terracini, S.},
  author={Vita, S.},
  title={Liouville type theorems and regularity of solutions to degenerate
  or singular problems part I: even solutions},
  journal={Comm. Partial Differential Equations},
  volume={46},
  date={2021},
  number={2},
  pages={310--361},
  issn={0360-5302},
  %review={\MR{4207950}},
  %doi={10.1080/03605302.2020.1840586},
}

\bib{TaoZhang2008}{article}{
   author={Tao, X.},
   author={Zhang, S.},
   title={Weighted doubling properties and unique continuation theorems for
   the degenerate Schr\"{o}dinger equations with singular potentials},
   journal={J. Math. Anal. Appl.},
   volume={339},
   date={2008},
   number={1},
   pages={70--84},
   issn={0022-247X},
   %review={\MR{2370633}},
   %doi={10.1016/j.jmaa.2007.06.042},
}

\bib{Ter96}{article}{
   author={Terracini, S.},
   title={On positive entire solutions to a class of equations with a
   singular coefficient and critical exponent},
   journal={Adv. Differential Equations},
   volume={1},
   date={1996},
   number={2},
   pages={241--264},
   issn={1079-9389},
   %review={\MR{1364003}},
}
\bib{TerTorVit18}{article}{
   author={Terracini, S.},
   author={Tortone, G.},
   author={Vita, S.},
   title={On $s$-harmonic functions on cones},
   journal={Anal. PDE},
   volume={11},
   date={2018},
   number={7},
   pages={1653--1691},
   issn={2157-5045},
   %review={\MR{3810469}},
   %doi={10.2140/apde.2018.11.1653},
}
 
\end{biblist}
\end{bibdiv}
\end{document}